\newtheorem{theorem}{Theorem}
\newtheorem{proposition}[theorem]{Proposition}
\newtheorem{definition}[theorem]{Definition}
\newtheorem{lemma}[theorem]{Lemma}
\newtheorem{corollary}[theorem]{Corollary}
\newtheorem{remark}[theorem]{Remark}
\newtheorem{example}{Example}
\DeclareMathOperator{\R}{\mathbb{R}}
\begin{document}
\title{hp-version time domain boundary elements for the wave equation on quasi-uniform meshes}
\author{ Heiko Gimperlein\thanks{Maxwell Institute for Mathematical Sciences and Department of Mathematics, Heriot--Watt University, Edinburgh, EH14 4AS, United Kingdom, email: h.gimperlein@hw.ac.uk.}  \thanks{Institute for Mathematics, University of Paderborn, Warburger Str.~100, 33098 Paderborn, Germany.}  \and Ceyhun \"{O}zdemir\thanks{Institute of Applied Mathematics, Leibniz University Hannover, 30167 Hannover, Germany. \newline H.~G.~acknowledges support by ERC Advanced Grant HARG 268105 and the EPSRC Impact Acceleration Account.} \and  David Stark${}^\ast$ \and Ernst P.~Stephan${}^\ddagger$}\date{}

\providecommand{\keywords}[1]{{\textit{Key words:}} #1}

\maketitle \vskip -0.5cm

\begin{abstract}
\noindent Solutions to the wave equation in the exterior of a polyhedral domain or a screen in $\mathbb{R}^3$ exhibit singular behavior from the edges and corners.  We present quasi-optimal $hp$-explicit estimates for the approximation of the Dirichlet and Neumann traces of these solutions \textcolor{blue}{for uniform time steps and (globally)} quasi-uniform meshes on the boundary. The results are applied to an $hp$-version of the time domain boundary element method. Numerical examples confirm the theoretical results for the Dirichlet problem both for screens and polyhedral domains. \end{abstract}
\keywords{boundary element method; approximation properties; hp methods; asymptotic expansion; wave equation.}

\section{\textcolor{black}{Introduction}}

This article initiates the study of high-order boundary elements in the time domain. For elliptic problems,  $p$- and $hp$-versions of the finite element method give rise to fast approximations of both smooth solutions and geometric singularities.  These methods converge to the solution by increasing the polynomial degree $p$ of the elements, possibly in combination with reducing the mesh size $h$ of the quasi-uniform mesh. They were first investigated in the group of Babuska \cite{babuskap2, babuskap, dorr1, dorr2}.  See \cite{schwab} for a comprehensive analysis for $2d$ problems.\\

The analogous $p$- and $hp$-versions of the boundary element method go back to \cite{alarcon, stephanp, stephanhp}.  More recent optimal convergence results for boundary elements on screens and polyhedral surfaces covering $3d$ problems have been obtained, for example, in \cite{besp, heuer01, heuer2, heuer3, bh08b}. \\

Boundary element methods for time dependent problems have recently become of interest \cite{sayas}. In this article we introduce a space-time $hp$-version of the time domain boundary element method for the wave  equation \textcolor{black}{with non-homogeneous Dirichlet or Neumann boundary conditions}. To be specific, in the exterior $\Omega \subset \mathbb{R}^3$ of a polyhedral surface or screen $\Gamma$ this article considers the initial-boundary value problem
\begin{align} \label{eq:oriProblemp}
c^{-2}\partial_t^2 u(t,x) -\Delta_x u(t,x)&=0 &\quad &\text{in }\mathbb{R}^+_t \times \Omega_x\ , \\
{u}(0,x)=\partial_t u(0,x)&=0 &\quad &\text{in }  \Omega \ , \nonumber 
\end{align}
for given Dirichlet (\textcolor{black}{$u|_\Gamma = g$}) or Neumann data (\textcolor{black}{$\partial_n u|_\Gamma = g$, $n$ \textcolor{black}{outer} unit  normal vector}) on $\Gamma = \partial \Omega$. We choose units such that $c=1$.\\

\textcolor{black}{As geometric prototypes for $\Gamma$, in this article we consider flat circular and polygonal screens, which pose the greatest numerical challenges. Extensions to closed polyhedral surfaces are mentioned.} To solve \eqref{eq:oriProblemp} numerically, we reformulate it as a time dependent integral equation on $\Gamma$ for the single layer or hypersingular operator. This integral equation is approximated using Galerkin $hp$-version boundary elements, based on tensor products of piecewise polynomial functions on a quasi-uniform mesh in space and a uniform mesh in time.\\ %The a posteriori error estimate in this article gives rise to adaptive $hp$ mesh refinement procedures.

Similar to $h$-version boundary elements, the approximation rate is determined by the singularities of the solution $u$ of \eqref{eq:oriProblemp} at non-smooth boundary points of the domain.  Near an edge or a corner a singular decomposition of the solution into a leading part of explicit singular functions plus smoother terms has been obtained in a series of works by Plamenevskii and collaborators \cite{kokotov, kokotov3, matyu,plamenevskii}. Their results imply that
at a fixed time $t$, the solution to the \textcolor{black}{inhomogeneous} wave equation \textcolor{black}{with homogeneous boundary conditions} admits an explicit singular expansion with exactly the same behavior as for elliptic equations. (For the latter, see \cite{dauge, petersdorff, petersdorff2}.) \\

\textcolor{black}{Using these works, we give precise asymptotic expansions of both the boundary trace of the solution $u|_\Gamma$ to the inhomogeneous boundary problem \eqref{eq:oriProblemp} and its normal derivative $\partial_n u|_\Gamma$ on $\Gamma$. We then study their approximation by piecewise polynomials of tensor-product form in space and time. Quasi-optimal convergence rates in space-time anisotropic Sobolev spaces are obtained  for the $p$- and $hp$-versions of the boundary element method in the time domain  on flat screens, with extensions to polyhedral surfaces. For the circular screen this result is the content of Theorem \ref{approxtheorem1}, while for the polygonal screen it is Theorem \ref{approxtheorem2}. }\\

\textcolor{black}{The approximation results for $u|_\Gamma$ and $\partial_n u|_\Gamma$ imply quasi-optimal approximation results for the solution to the relevant boundary integral formulations, in Corollary \ref{approxcor1} for the circular screen, respectively Corollary \ref{approxcor2} for the polygonal screen. Indeed, on the flat screen the solution $\phi$ to the hypersingular integral equation is given by the jump $\phi = \left[u\right]|_\Gamma$ across $\Gamma$ of the solution $u$ to the wave equation with Neumann boundary conditions $\partial_n u|_\Gamma = g$. Similarly, the solution $\psi$ to the single layer integral equation is given by $\psi = \left[\partial_n u\right]|_\Gamma$ in terms of the solution $u$ to the wave equation with Dirichlet boundary conditions $u|_\Gamma=f$. This result from the elliptic case \cite{epsscreen} translates verbatim into the time-dependent setting.}\\

We mention a generalization of our results to polyhedral domains in Section \ref{polhed}.  \\

Together with the a priori estimates for the time domain boundary element methods on screens \cite{HGEPSN, gimperleintyre}, our results imply convergence rates for the $p$-version Galerkin approximations which are twice those observed for the quasi-uniform $h$-method in \cite{graded}. \\

We first prove the  approximation properties  on the circular screen, without corners, and then discuss the approximation of the corner and corner-edge singularities on polygonal screens. On the square the convergence rate is determined by the singularities at the edges.   \\

Our numerical experiments in Section \ref{experimentsscr} confirm the theoretical results and exhibit the predicted convergence rate for the Dirichlet problem for the time dependent wave equation outside a square screen. 
 The convergence rate in the energy norm is doubled compared to the convergence rate of the $h$-version on a uniform mesh, as predicted. Our numerical experiments in Section \ref{experimentsico} for the wave equation outside an icosahedron similarly confirm the predicted convergence of the $p$-method.\\

Related previous work for the time independent Laplace  equation includes, in particular, the analysis of the $p$-version by Schwab and Suri \cite{schwabs} of the singularities of $u$  in polyhedral domains and their implications for the numerical approximation of the hypersingular integral  equation by boundary elements. On geometrically graded meshes the $hp$-version was studied in \cite{heuer4}, but the analysis does not yield a priori estimates on quasi-uniform meshes. Sharp estimates on piecewise flat open surfaces with quasi-uniform meshes are due to Bespalov and Heuer \cite{heuer2, heuer3, bh08b} for both the single layer and hypersingular integral  equations. See \cite{besp,heuer01} for extensions to the Lam\'{e} equation.\\

\textcolor{black}{The analysis of boundary element methods for the wave equation goes back to seminal works of Bamberger and Ha-Duong \cite{BamHa}, with significant recent refinements by Joly and Rodriguez \cite{jr}. Alternative energy-based formulations have been studied by Aimi and collaborators \cite{aimi}. For screen problems we refer to Becache and Ha-Duong \cite{b1,b2}. Higher-order methods based on non-polynomial approximation spaces have recently been considered \cite{hgs}. See \cite{costabel04, haduong,sayas} for overviews of the theory.}\\

\noindent \emph{The article is organized as follows:} 
Section \ref{faframework} recalls the boundary integral operators associated to the wave equation as well as their mapping properties between suitable space-time anisotropic Sobolev spaces. It concludes by reformulating the Dirichlet and Neumann problems for the wave equation \eqref{eq:oriProblemp} as boundary integral equations in the time domain. The following Section \ref{Discretisationp} introduces the space-time discretizations and a time domain boundary element method to solve the single layer and hypersingular integral equations. The asymptotic expansions of solutions to the wave equation and their $hp$ approximation are the content of Section \ref{sectasympt}, for circular and polygonal screens as well as for polyhedral surfaces. %The a priori estimates are complemented by an a posteriori estimate and a resulting $hp$-adaptive method for the single layer equation in Section \ref{apostest}. 
The article presents  numerical experiments both on screens and outside polyhedral domains in Section \ref{experiments}, before summarizing the conclusions in Section \ref{secconc}. \textcolor{black}{An Appendix discusses the derivation of the singular expansions for $u|_\Gamma$ and $\partial_n u|_\Gamma$, the central basis of the convergence analysis in this article.} \\

\noindent \textcolor{black}{\emph{Notation:} We write $f \lesssim g$ provided there exists a constant $C$ such that $f \leq Cg$. If the constant
$C$ is allowed to depend on a parameter $\sigma$, we write $f \lesssim_\sigma g$.}

\section{Boundary integral operators and Sobolev spaces}
\label{faframework}

Let $\Gamma$ be the boundary of a polyhedral domain in $\mathbb{R}^3$, consisting of curved, polygonal boundary faces, or an open polyhedral surface (screen) \textcolor{black}{obtained from a Lipschitz dissection of the boundary \cite[p.~99]{mclean}}.

We make the following ansatz for the solution to \eqref{eq:oriProblemp} in terms of  the single layer potential for the wave equation,
\begin{equation}\label{singlay22}
u(t,x) =\int_0^\infty\int_{\Gamma} G(t- \tau,x,y)\ {\psi}(\tau,y)\ dy \ d\tau\ .
\end{equation}
Here $G$ is a fundamental solution to the wave equation and $\psi(\tau,y)=0$ for $\tau<0$. In 3 dimensions
\begin{align*}
u(t,x) &=\frac{1}{{4} \pi} \int_\Gamma \frac{\psi(t-|x-y|,y)}{|x-y|}\ dy \ .
\end{align*}
Taking Dirichlet boundary values on $\Gamma$ of the integral \eqref{singlay22}, we obtain the single layer operator $V$:
$$V \psi(t,x)={  \int_0^\infty\int_{\Gamma} G(t- \tau,x,y)\ \psi(\tau,y)\ dy\ d\tau\, . }$$
The wave equation \eqref{eq:oriProblemp} with Dirichlet boundary conditions, $u=f$ on $\Gamma$, is equivalent to the integral equation
\begin{equation}\label{dirproblemV}
V \psi = u|_\Gamma =  f \ .
\end{equation}

\noindent In addition to $V$, also the adjoint double layer  operator $K'$, the double layer operator $K$ and the hypersingular operator $W$ on $\Gamma$ will be used:
\begin{align}\label{operators}
\nonumber K\phi(t,x)&=\int_0^\infty\int_{\Gamma} \frac{\partial G}{\partial n_y}(t- \tau,x,y)\ \phi(\tau,y)\ dy \ d\tau,\\
K' \phi(t,x)&= \int_0^\infty\int_{\Gamma} \frac{\partial G}{\partial n_x}(t- \tau,x,y)\ \phi(\tau,y)\ dy \ d\tau\, ,\\
\nonumber W \phi(t,x)&= \int_0^\infty\int_{\Gamma} \frac{\partial^2 G}{\partial n_x \partial n_y}(t- \tau,x,y)\ \phi(\tau,y)\ dy \ d\tau\ .
\end{align}

\begin{remark}\label{vanish}
On a flat screen $\Gamma \subset \mathbb{R}^2 \times \{0\}$, $\frac{\partial G}{\partial n} = 0$ and therefore $K\phi=K'\phi=0$.
\end{remark}

These operators are studied in space-time anisotropic Sobolev spaces $H_\sigma^{{r}}(\mathbb{R}^+,\widetilde{H}^{{s}}(\Gamma))$, see  \cite{HGEPSN} or \cite{haduong}. {To define the spaces for $\partial\Gamma\neq \emptyset$, extend $\Gamma$ to a closed, orientable Lipschitz manifold $\widetilde{\Gamma}$. }

Sobolev spaces of supported distributions in $\Gamma$ are defined as:
$$\widetilde{H}^{{s}}(\Gamma) = \{u\in H^{{s}}(\widetilde{\Gamma}): \mathrm{supp}\ u \subset {\overline{\Gamma}}\}\ , \quad\ {{s}} \in \mathbb{R}\ .$$
Then we set ${H}^{{s}}(\Gamma) = H^{{s}}(\widetilde{\Gamma}) / \widetilde{H}^{{s}}({\widetilde{\Gamma}\setminus\overline{\Gamma}})$. \\
To define an explicit scale of Sobolev norms, fix a partition of unity $\alpha_i$ subordinate to a covering of $\widetilde{\Gamma}$ by open sets $B_i$ and diffeomorphisms $\phi_i$ mapping each $B_i$ into the unit cube $\subset \mathbb{R}^n$. They induce a family of norms from $\mathbb{R}^d$:
\begin{equation*}
 ||u||_{{{s}},{\widetilde{\Gamma}}}=\left( \sum_{i=1}^p \int_{\mathbb{R}^n} (|\omega|^2+|\xi|^2)^{{s}}|\mathcal{F}\left\{(\alpha_i u)\circ \phi_i^{-1}\right\}(\xi)|^2 d\xi \right)^{\frac{1}{2}}\ .
\end{equation*}
Here, $\mathcal{F}$ denotes the Fourier transform. %, and  if $\omega=1$ we often omit the index $\omega$. 
The norms for different $\omega \in \mathbb{C}\setminus \{0\}$ are equivalent.  The above norms induce norms on $H^{{s}}(\Gamma)$, $||u||_{{{s}},\Gamma} = \inf_{v \in \widetilde{H}^{{s}}(\widetilde{\Gamma}\setminus\overline{\Gamma})} \ ||u+v||_{{{s}},\widetilde{\Gamma}}$, and on $\widetilde{H}^{{s}}(\Gamma)$, $||u||_{{{s}},\Gamma, \ast } = ||e_+ u||_{{{s}},\widetilde{\Gamma}}$. Here, $e_+$ extends the distribution $u$ by $0$ from $\Gamma$ to $\widetilde{\Gamma}$. 

\textcolor{black}{Weighted Sobolev spaces in time for $r\in\mathbb{R}$ and $\sigma>0$: are defined as
\begin{align*}
 H^{{r}}_\sigma(\mathbb{R}^+)&=\{ u \in \mathcal{D}^{'}_{+}: e^{-\sigma t} u \in \mathcal{S}^{'}_{+}  \textrm{ and }   \|u\|_{H^r_\sigma(\mathbb{R}^+)} < \infty \}\ .
\end{align*}
Here, $\mathcal{D}^{'}_{+}$ denotes the space of distributions on $\mathbb{R}$ with support in $[0,\infty)$, and  $\mathcal{S}^{'}_{+}$ the subspace of tempered distributions. The Sobolev spaces are Hilbert spaces endowed with the norm 
\begin{align*}
\|u\|_{H^r_\sigma(\mathbb{R}^+)}&=\left(\int_{-\infty+i\sigma}^{+\infty+i\sigma}|\omega|^{2r}\ |\hat{u}(\omega)|^2\ d\omega \right)^{\frac{1}{2}}\,.
\end{align*}}

{The scale of space-time anisotropic Sobolev spaces combines the Sobolev norms in space and time:
\begin{definition}\label{sobdef}
For $r,s\in\mathbb{R}$ {and $\sigma>0$} define
\begin{align*}
 H^{{r}}_\sigma(\mathbb{R}^+,{H}^{{s}}(\Gamma))&=\{ u \in \mathcal{D}^{'}_{+}(H^{{s}}(\Gamma)): e^{-\sigma t} u \in \mathcal{S}^{'}_{+}(H^{{s}}(\Gamma))  \textrm{ and }   \|u\|_{{{r,s}},\Gamma} < \infty \}\ , \\
 H^{{r}}_\sigma(\mathbb{R}^+,\widetilde{H}^{{s}}({\Gamma}))&=\{ u \in \mathcal{D}^{'}_{+}(\widetilde{H}^{{s}}({\Gamma})): e^{-\sigma t} u \in \mathcal{S}^{'}_{+}(\widetilde{H}^{{s}}({\Gamma}))  \textrm{ and }   \|u\|_{{{r,s}},\Gamma, \ast} < \infty \}\ .
\end{align*}
$\mathcal{D}^{'}_{+}(E)$ denotes the space of distributions on $\mathbb{R}$ with support in $[0,\infty)$, taking values in $E = {H}^{{s}}({\Gamma}), \widetilde{H}^{{s}}({\Gamma})$, and  $\mathcal{S}^{'}_{+}(E)$ the subspace of tempered distributions. The Sobolev spaces are Hilbert spaces endowed with the norm 
\begin{align*}
\|u\|_{{{r,s}},\Gamma}&=\left(\int_{-\infty+i\sigma}^{+\infty+i\sigma}|\omega|^{2{{r}}}\ \|\hat{u}(\omega)\|^2_{{{s}},\Gamma}\ d\omega \right)^{\frac{1}{2}}\ ,\\
\|u\|_{{{r,s}},\Gamma,\ast}&=\left(\int_{-\infty+i\sigma}^{+\infty+i\sigma}|\omega|^{2{{r}}}\ \|\hat{u}(\omega)\|^2_{{{s}},\Gamma,\ast}\ d\omega \right)^{\frac{1}{2}}\,.
\end{align*}
\end{definition}
When $|s|\leq 1$ one can show that the spaces are independent of the choice of $\alpha_i$ and $\phi_i$. \textcolor{black}{For $s>1$, it is most convenient to define them (via the trace theorem) as the image of the boundary trace on $\Gamma$ of the spaces in the Euclidean domain $\Omega \subset \mathbb{R}^3$ \textcolor{black}{\cite{schwab}}. For the spaces on $\Omega$, the above definitions apply.}

We state the mapping properties of the boundary integral operators, \textcolor{black}{see e.g.~\cite{costabel04, haduong}, with extensions to screens considered in \cite{HGEPSN}}:
\begin{theorem}\label{mappingproperties}
The following operators are continuous for $r\in \R$, ${{\sigma>0}}$:
\begin{align*}
& V:  {H}^{r+1}_\sigma(\R^+, \tilde{H}^{-\frac{1}{2}}(\Gamma))\to  {H}^{r}_\sigma(\R^+, {H}^{\frac{1}{2}}(\Gamma)) \ ,
\\ & K':  {H}^{r+1}_\sigma(\R^+, \tilde{H}^{-\frac{1}{2}}(\Gamma))\to {H}^{r}_\sigma(\R^+, {H}^{-\frac{1}{2}}(\Gamma)) \ ,
\\ & K:  {H}^{r+1}_\sigma(\R^+, \tilde{H}^{\frac{1}{2}}(\Gamma))\to {H}^{r}_\sigma(\R^+, {H}^{\frac{1}{2}}(\Gamma)) \ ,
\\ & W:  {H}^{r+1}_\sigma(\R^+, \tilde{H}^{\frac{1}{2}}(\Gamma)))\to {H}^{r}_\sigma(\R^+, {H}^{-\frac{1}{2}}(\Gamma)) \ .
\end{align*}
\end{theorem}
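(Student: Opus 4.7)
The plan is to reduce the mapping properties to frequency-domain estimates for the Helmholtz layer potentials via the Fourier--Laplace transform in time, along the lines of Bamberger and Ha-Duong. For $u \in H^{r+1}_\sigma(\mathbb{R}^+, \widetilde{H}^{-1/2}(\Gamma))$ and $\omega = \xi_0 + i\sigma$ with $\sigma > 0$ fixed, write $\hat u(\omega) \in \widetilde{H}^{-1/2}(\Gamma)$ and observe that the retarded single layer operator $V$ is a convolution in time whose symbol is the Helmholtz single layer operator $\hat V(\omega)$ with fundamental solution $\tfrac{e^{i\omega|x-y|}}{4\pi|x-y|}$. Thus $\widehat{V\psi}(\omega) = \hat V(\omega) \hat \psi(\omega)$, and similarly for $K$, $K'$, $W$. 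The strategy is therefore: (i) prove $\omega$-explicit continuity bounds for $\hat V(\omega)$, $\hat K(\omega)$, $\hat K'(\omega)$, $\hat W(\omega)$ between the appropriate Sobolev spaces on $\Gamma$, then (ii) integrate the resulting estimates over the line $\{\mathrm{Im}\,\omega = \sigma\}$ via the Plancherel-type identity built into Definition \ref{sobdef}.

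For step (i), I would deduce the Helmholtz bounds from the standard variational/energy argument: given boundary data, solve the Helmholtz transmission problem in $\Omega$ and $\mathbb{R}^3 \setminus \overline{\Omega}$ using the weak formulation with complex wavenumber $\omega$, and read off the layer potential bounds by taking traces. Because of the complex shift $\sigma > 0$, coercivity is available (Green's identity picks up a sign from $\mathrm{Im}\,\omega^2 = 2\sigma\xi_0$), so one obtains
\[
\|\hat V(\omega) \psi\|_{1/2,\Gamma} \lesssim_\sigma |\omega|\, \|\psi\|_{-1/2,\Gamma,\ast},\qquad \|\hat W(\omega)\phi\|_{-1/2,\Gamma} \lesssim_\sigma |\omega|\, \|\phi\|_{1/2,\Gamma,\ast},
\]
and the analogous bounds for $\hat K(\omega), \hat K'(\omega)$, each with a single factor of $|\omega|$ that accounts for the $r+1 \to r$ loss in time regularity. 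The $|\omega|$-factor comes from scaling the Helmholtz norm $(|\omega|^2\|\cdot\|_0^2 + \|\nabla\cdot\|_0^2)^{1/2}$ against the natural trace norms.

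For step (ii), I multiply the squared frequency-domain bound by $|\omega|^{2r}$ and integrate over $\mathrm{Im}\,\omega = \sigma$: the factor $|\omega|^{2(r+1)}$ on the right-hand side matches the norm in $H^{r+1}_\sigma(\mathbb{R}^+, \widetilde{H}^{\pm1/2}(\Gamma))$, while $|\omega|^{2r}$ on the left gives the $H^{r}_\sigma(\mathbb{R}^+, H^{\pm1/2}(\Gamma))$ norm. The screen case is handled by defining the spaces $\widetilde{H}^{\pm1/2}(\Gamma)$, $H^{\pm1/2}(\Gamma)$ through the closed manifold $\widetilde\Gamma$ as in Section \ref{faframework}, so the Helmholtz arguments on Lipschitz surfaces apply after the Lipschitz dissection.

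The main technical obstacle is proving the Helmholtz-level bounds with a clean, polynomial dependence on $|\omega|$ on screens: there, the usual interior regularity and jump relations have to be replaced by arguments using $\widetilde{H}^{\pm1/2}$ and an auxiliary transmission problem through $\widetilde\Gamma$, and one must ensure the constants degenerate at worst like a power of $|\omega|$ uniformly for $\mathrm{Im}\,\omega = \sigma$. Once that is settled, the reduction to frequency domain plus Plancherel yields all four continuity statements in a unified way, and references such as \cite{HGEPSN, costabel04, haduong} provide the technical Helmholtz estimates that complete the argument.
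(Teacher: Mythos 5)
Your proposal is essentially the argument the paper itself relies on: Theorem \ref{mappingproperties} is not proved in the paper but cited from \cite{costabel04,haduong}, with the screen case from \cite{HGEPSN}, and those proofs are exactly your Fourier--Laplace reduction to $\omega$-explicit Helmholtz layer-potential bounds on the line $\mathrm{Im}\,\omega=\sigma$ (with a single factor $|\omega|$ accounting for the shift $r+1\to r$), followed by integration via the Plancherel structure of Definition \ref{sobdef}. One small correction to your step (i): the coercivity does not come from a sign of $\mathrm{Im}\,\omega^2=2\sigma\xi_0$, which changes sign with $\xi_0$, but from testing the variational form with $\overline{-i\omega\,\hat u}$ (the Fourier image of $\partial_t u$), which gives $\mathrm{Re}\bigl(i\bar\omega\,a_\omega(u,u)\bigr)=\sigma\bigl(\|\nabla u\|^2+|\omega|^2\|u\|^2\bigr)$ and is precisely the source of the $|\omega|$ loss you invoke.
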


\textcolor{black}{By a fundamental observation of Bamberger and Ha-Duong \cite{BamHa},} $V\partial_t$ satisfies a coercivity estimate in the norm of ${H}^{0}_\sigma(\R^+, \tilde{H}^{-\frac{1}{2}}(\Gamma))$:  $\|\psi\|^2_{0,-\frac{1}{2}, \Gamma,\ast} \lesssim_\sigma \langle V \psi, \partial_t \psi\rangle$. From the mapping properties of Theorem \ref{mappingproperties} one also has  the continuity of the bilinear form associated to $V \partial_t$ in a bigger norm: $\langle V \psi, \partial_t \psi\rangle \lesssim \|\psi\|^2_{1,-\frac{1}{2}, \Gamma,\ast}$.  Similar estimates hold for $W\partial_t$: $\|\phi\|^2_{0,\frac{1}{2}, \Gamma,\ast} \lesssim_\sigma \langle W \phi, \partial_t \phi\rangle\lesssim \|\phi\|^2_{1,\frac{1}{2}, \Gamma,\ast}$. Proofs and further information may be found in  \cite{HGEPSN, haduong}.\\

The space-time Sobolev spaces allow a precise statement and analysis of the weak formulation for the Dirichlet problem \eqref{dirproblemV}:
{Find $\psi \in H^1_{\sigma}(\mathbb{R}^+,\widetilde{H}^{-\frac{1}{2}}(\Gamma))$ such that} for all $\Psi\in H^1_{\sigma}(\mathbb{R}^+,\widetilde{H}^{-\frac{1}{2}}(\Gamma))$
\begin{equation}\label{weakform}
 \int_0^\infty\int_\Gamma (V \psi(t,{x})) \partial_t\Psi(t,{x}) \ dx\ d_\sigma t = \int_0^\infty \int_\Gamma f(t,{x}) \partial_t\Psi(t,{x}) \ dx\ d_\sigma t\ , 
\end{equation}
where $d_\sigma t = e^{-2 \sigma t}dt$.\\

For  the Neumann problem, a double layer potential ansatz for $u$:
\begin{align}\label{doubleansatz}
u(t,x)&=\int_{\mathbb{R}^+ \times \Gamma}  \frac{\partial G}{\partial n_y}(t- \tau,x,y)\ \phi(\tau,y)\ d\tau\ dy \ ,
\end{align}
with $\phi(s,y) = 0$ for $s\leq 0$ leads to the hypersingular equation
\begin{align}\label{hypersingeq}
 W \phi = \frac{\partial u}{\partial n}\Big|_\Gamma= g\ ,
\end{align}
with weak formulation:\\
Find $\phi \in H^1_\sigma(\mathbb{R}^+, \widetilde{H}^{\frac{1}{2}}(\Gamma))$ such that for all $\Phi \in H^1_\sigma(\mathbb{R}^+, \widetilde{H}^{\frac{1}{2}}(\Gamma))$ there holds:
\begin{align}\label{weakformW}
 \int_0^\infty\int_{\Gamma} (W  \phi(t,{x})) \ {\partial_t}\Phi(t,{x})  \, dx \,d_\sigma t\  = \int_0^\infty\int_{\Gamma}   g(t,{x})\ {\partial_t}\Phi(t,{x})\, dx \,d_\sigma t\ .
\end{align}

The boundary integral equations \eqref{weakform}, respectively \eqref{weakformW}, for the Dirichlet and Neumann problems are well-posed \cite{HGEPSN, gimperleintyre}:
\begin{theorem}{Let $\sigma>0$.}\\
a) Assume that $f \in H^2_{\sigma}(\mathbb{R}^+,H^{\frac{1}{2}}(\Gamma))$. Then there exists a unique solution $\psi \in H^1_{\sigma}(\mathbb{R}^+,\widetilde{H}^{-\frac{1}{2}}(\Gamma))$  of \eqref{weakform} and
\begin{equation}
\|\psi\|_{1, -\frac{1}{2}, \Gamma, \ast} \lesssim_\sigma \|f\|_{2, \frac{1}{2}, \Gamma}\ .
\end{equation}
b) Assume that $g \in H^{2}_{\sigma}(\mathbb{R}^+,H^{-\frac{1}{2}}(\Gamma))$. Then there exists a unique solution $\phi \in H^{1}_{\sigma}(\mathbb{R}^+,\widetilde{H}^{\frac{1}{2}}(\Gamma))$  of \eqref{weakformW} and
\begin{equation}
\|\phi\|_{1,\frac{1}{2}, \Gamma, \ast}\lesssim_\sigma \|g\|_{2,-\frac{1}{2}, \Gamma} \ .
\end{equation}
\end{theorem}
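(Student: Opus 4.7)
The plan is to reduce the problem to one on the Fourier--Laplace side and then reassemble the frequency bounds into the claimed space-time estimates. Since the bilinear form $a(\psi,\Psi)=\langle V\psi,\partial_t\Psi\rangle$ is coercive only in $\|\cdot\|_{0,-\frac12,\Gamma,\ast}$ but continuous in the larger norm $\|\cdot\|_{1,-\frac12,\Gamma,\ast}$, a direct Lax--Milgram argument in the natural energy space fails, so one must work frequency-by-frequency.

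First, I would take the Fourier--Laplace transform in time at $\omega=\xi_0+i\sigma$ with $\sigma>0$, which turns \eqref{weakform} into the frequency-domain single layer equation $\widehat{V}(\omega)\hat\psi(\omega)=\hat f(\omega)$ on $\Gamma$, where $\widehat{V}(\omega)$ is the single layer operator for the Helmholtz problem at complex wavenumber $\omega$. The Bamberger--Ha Duong coercivity cited just before the theorem statement is, in Plancherel form, an $\omega$-explicit estimate of the type
\begin{equation*}
\mathrm{Re}\,\langle\widehat{V}(\omega)\hat\psi,-i\bar\omega\hat\psi\rangle\gtrsim_\sigma \|\hat\psi\|_{-\frac12,\Gamma,\ast}^2,
\end{equation*}
and Theorem \ref{mappingproperties} gives, in the same Plancherel form, continuity with an $|\omega|$-polynomial weight from $\tilde H^{-\frac12}(\Gamma)$ to $H^{\frac12}(\Gamma)$. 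Lax--Milgram applied at each fixed $\omega$ then yields a unique $\hat\psi(\omega)\in\tilde H^{-\frac12}(\Gamma)$ together with the bound $\|\hat\psi(\omega)\|_{-\frac12,\Gamma,\ast}\lesssim_\sigma|\omega|\,\|\hat f(\omega)\|_{\frac12,\Gamma}$.

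Next, I would multiply this pointwise-in-$\omega$ inequality by $|\omega|^2$ and integrate over the contour $\mathrm{Im}\,\omega=\sigma$. Squaring and applying the definition of $H^r_\sigma(\mathbb{R}^+,H^s(\Gamma))$ in Definition \ref{sobdef} gives
\begin{equation*}
\|\psi\|_{1,-\frac12,\Gamma,\ast}^2=\int|\omega|^{2}\|\hat\psi(\omega)\|_{-\frac12,\Gamma,\ast}^2\,d\omega\lesssim_\sigma\int|\omega|^{4}\|\hat f(\omega)\|_{\frac12,\Gamma}^2\,d\omega=\|f\|_{2,\frac12,\Gamma}^2,
\end{equation*}
which is the claim of part (a). To confirm that the inverse Laplace transform produces a distribution in $\mathcal{D}'_+$ (i.e.\ causal), I would invoke the holomorphy of $\omega\mapsto \hat\psi(\omega)$ in the upper half-plane $\mathrm{Im}\,\omega>0$, which follows from the holomorphy of $\omega\mapsto\widehat{V}(\omega)^{-1}$ together with that of $\hat f$, and then apply a Paley--Wiener argument. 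The proof of part (b) is entirely parallel: replace $\widehat{V}(\omega)$ by $\widehat{W}(\omega)$, use the analogous coercivity $\mathrm{Re}\,\langle\widehat{W}(\omega)\hat\phi,-i\bar\omega\hat\phi\rangle\gtrsim_\sigma \|\hat\phi\|_{\frac12,\Gamma,\ast}^2$ and the corresponding mapping property from Theorem \ref{mappingproperties}.

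The main obstacle is tracking the precise power of $|\omega|$ in the frequency-domain inverse bound: the loss of one power from the time derivative in $a(\psi,\partial_t\Psi)$ and a further power from the gap between the coercivity and continuity norms together force the data to sit two time-derivatives higher than the solution, which is exactly the index shift $H^1_\sigma\leftrightarrow H^2_\sigma$ in the statement. Everything else is a matter of combining Lax--Milgram at each $\omega$ with Plancherel on the $\mathrm{Im}\,\omega=\sigma$ line; full details for this argument on screens can be found in \cite{HGEPSN,gimperleintyre}.
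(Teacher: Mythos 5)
Your proposal is essentially the argument the paper relies on: the theorem is not proved in the text but quoted from \cite{HGEPSN, gimperleintyre}, whose proofs follow exactly this Bamberger--Ha Duong route of Fourier--Laplace transforming in time, applying coercivity/Lax--Milgram at each fixed $\omega$ with $\mathrm{Im}\,\omega=\sigma$, and reassembling via Plancherel while tracking the powers of $|\omega|$ that produce the shift from $H^2_\sigma$ data to an $H^1_\sigma$ solution. Your bookkeeping of the frequency-wise bound $\|\hat\psi(\omega)\|_{-\frac12,\Gamma,\ast}\lesssim_\sigma|\omega|\,\|\hat f(\omega)\|_{\frac12,\Gamma}$ matches the stated estimate (the prose remark about ``two time-derivatives'' should read one, as your own displayed computation shows), so the proposal is correct and coincides with the cited proof.
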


We finally mention some useful technical results: The first localizes estimates for fractional Sobolev norms \cite{graded}:
\begin{lemma}\label{lemma3.2}
Let $\Gamma,\, \Gamma_j\; (j=1,\dots ,N)$ be Lipschitz domains with $\overline{\Gamma} = \bigcup\limits_{j=1}^N \overline{\Gamma}_j$, $\tilde{u}\in {H^r_\sigma(\mathbb{R}^+}, \widetilde{H}^s(\Gamma)),\; u\in {H^r_\sigma(\mathbb{R}^+}, H^s(\Gamma)),\; s\in\mathbb{R}.$ Then for all $s \in [-1,1]$, $r \in \mathbb{R}$ {and $\sigma>0$}
\begin{align}
	\sum\limits_{j=1}^N \| u\|^2_{r,s,\Gamma_j} \ & \textcolor{black}{ \lesssim }\  \| u\|^2_{r,s,\Gamma} \label{3.21a}\ ,\\
	\| \tilde{u}\|^2_{r,s,\Gamma, \ast} \ & \textcolor{black}{ \lesssim }\  \sum\limits_{j=1}^N \| \tilde{u}\|^2_{r,s,\Gamma_j, \ast}\ .\label{3.21b}
\end{align}
\end{lemma}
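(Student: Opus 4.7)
The plan is to reduce to the well-known spatial localization estimate by taking the Fourier-Laplace transform in time and working frequency-by-frequency. By the very definition of the norms in Definition \ref{sobdef},
\begin{equation*}
\|u\|^2_{r,s,\Gamma}=\int_{-\infty+i\sigma}^{+\infty+i\sigma}|\omega|^{2r}\,\|\hat{u}(\omega)\|^2_{s,\Gamma}\,d\omega,
\end{equation*}
and analogously for the norms on $\Gamma_j$ and for the $\widetilde{H}^s$-norms. Since Fubini allows me to interchange the finite $j$-sum with the frequency integral, both inequalities (\ref{3.21a}) and (\ref{3.21b}) are implied by their \emph{stationary} counterparts
\begin{equation*}
\sum_{j=1}^{N}\|v\|^2_{s,\Gamma_j}\ \lesssim\ \|v\|^2_{s,\Gamma},\qquad \|\tilde{v}\|^2_{s,\Gamma,\ast}\ \lesssim\ \sum_{j=1}^{N}\|\tilde{v}\|^2_{s,\Gamma_j,\ast},
\end{equation*}
holding at every fixed $\omega$ on the integration contour, with a constant independent of $\omega$. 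Here $v=\hat{u}(\omega)$ and $\tilde{v}=\hat{\tilde{u}}(\omega)$.

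To obtain these spatial estimates I would introduce a smooth partition of unity $\{\chi_j\}$ subordinate to the open cover underlying $\{\Gamma_j\}$ and use the fact that multiplication by $\chi_j$ is bounded on $H^s$ and on $\widetilde{H}^s$ for $|s|\leq 1$; this is a standard localization result in the time-independent setting, see e.g.\ \cite{mclean}. For $s=0$ the estimates are immediate; for $s=1$ they follow from the product rule together with $\|\chi_j\|_\infty,\|\nabla\chi_j\|_\infty\lesssim 1$; for $s=-1$ they follow by duality; the intermediate range is handled by complex interpolation. The dual-type inequality (\ref{3.21b}) is obtained in the standard way by writing $\tilde{v}=\sum_j \chi_j \tilde{v}$ and applying the triangle inequality followed by the multiplier bound.

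The one technical point to track carefully is that the spatial norm used here is the $\omega$-scaled norm with weight $(|\omega|^2+|\xi|^2)^s$, so the multiplier estimate must be \emph{uniform in $\omega$}. A direct computation for $s=1$ gives $\|\chi_j v\|^2_{1,\Gamma}\lesssim \|v\|^2_{1,\Gamma}+\|v\|^2_{0,\Gamma}\lesssim(1+|\omega|^{-2})\|v\|^2_{1,\Gamma}$, and since the contour of integration lies on $\operatorname{Im}\omega=\sigma>0$, we have $|\omega|\geq\sigma$ and the resulting constant is uniformly bounded (depending on $\sigma$ but not on $\omega$). The same observation, combined with duality and interpolation, yields the uniform bound throughout $s\in[-1,1]$. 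This uniformity in $\omega$ is the main obstacle of the proof; once it is in place, multiplying both sides by $|\omega|^{2r}$ and integrating over the contour returns (\ref{3.21a}) and (\ref{3.21b}), completing the argument.
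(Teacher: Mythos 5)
The paper itself does not prove Lemma \ref{lemma3.2}: it is recalled from \cite{graded} without proof, so the comparison is with the natural argument behind that reference. Your structural reduction is the right one and is carried out correctly: both space-time norms are frequency integrals of the $\omega$-scaled spatial norms, Fubini lets you argue at fixed $\omega$ on the contour $\mathrm{Im}\,\omega=\sigma$, and your observation that the only delicate point is $\omega$-uniformity of the spatial constants, settled by $|\omega|\geq\sigma$ (e.g.\ your $s=1$ computation with the factor $1+|\omega|^{-2}\leq 1+\sigma^{-2}$), is exactly the key point of the time-dependent statement.

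The gap is in how you obtain the fixed-$\omega$ estimate behind \eqref{3.21b}. The hypothesis $\overline{\Gamma}=\bigcup_j\overline{\Gamma}_j$ is meant for (and is later applied to) decompositions in which the $\Gamma_j$ have pairwise disjoint interiors and meet along interfaces, e.g.\ mesh elements or the rectangles $I_1\times I_2$ used in Section \ref{sectasympt}. In that situation the open pieces do not cover $\overline{\Gamma}$, so there is no smooth partition of unity with $\mathrm{supp}\,\chi_j\subset\overline{\Gamma}_j$; any smooth $\chi_j$ summing to one near $\overline{\Gamma}$ overlaps neighbouring pieces, hence $\chi_j\tilde v$ contains contributions of $\tilde v$ from adjacent $\Gamma_k$ and cannot be bounded by $\|\tilde v\|_{s,\Gamma_j,\ast}$ alone, which is what your ``triangle inequality followed by the multiplier bound'' requires. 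The repair is simple and in fact removes the need for the multiplier machinery altogether: at fixed $\omega$, \eqref{3.21a} is immediate from the quotient-norm definition, since every admissible extension for $\Gamma$ is admissible for each $\Gamma_j$, giving $\|v\|_{s,\Gamma_j}\leq\|v\|_{s,\Gamma}$ and hence the constant $N$ for all $s$ and all $\omega$; and \eqref{3.21b} follows -- under the tacit assumption, needed anyway for its right-hand side to be finite, that each restriction extended by zero lies in $\widetilde{H}^s(\Gamma_j)$ and that $e_+\tilde v=\sum_j e_+(\tilde v|_{\Gamma_j})$ -- from the triangle inequality and Cauchy--Schwarz, or alternatively by duality from \eqref{3.21a} for $|s|\leq 1$. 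All these constants are $\omega$-independent, so your frequency-wise integration then finishes the proof; the $\sigma$-uniformity you track so carefully is only needed if one insists on the cut-off argument.
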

From Lemmas 8 and 9 in \cite{graded} we recall:
\begin{lemma}\label{lemma3.3}
	Let $r\geq 0$, $0\leq s_1, s_2\leq 1$, $I_j = [0, h_j],\; u_2\in\widetilde{H}^{-s_2}(I_2)$, $u_1 \in \widetilde{H}^r_\sigma(\mathbb{R}^+, H^{-s_1}(I_1))$. Then there holds
	\begin{equation*}	
		\| u_1(t,x) u_2(y) \|_{r, -s_1 - s_2, I_1\times I_2, \ast} \lesssim \| u_1\|_{r, -s_1, I_1,\ast} \| u_2\|_{-s_2, I_2,\ast} \ .
	\end{equation*}
\end{lemma}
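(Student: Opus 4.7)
The plan is to exploit the fact that the variables $t,x,y$ are essentially decoupled on the tensor-product domain $\mathbb{R}^+ \times I_1 \times I_2$, and that $u_2$ is time-independent. Taking the Fourier transform in time, since $u_2(y)$ does not depend on $t$,
\[
\mathcal{F}_t\bigl[u_1(t,x)\,u_2(y)\bigr](\omega,x,y) \;=\; \hat{u}_1(\omega,x)\,u_2(y).
\]
Inserting this into Definition \ref{sobdef} and using Fubini reduces the claim to the following spatial estimate, uniform in $\omega$ in the line $\operatorname{Im}\omega=\sigma$:
\[
\bigl\|\hat{u}_1(\omega,\cdot)\,u_2\bigr\|_{-s_1-s_2,\,I_1\times I_2,\ast}^{2}
\;\lesssim_{\sigma}\;
\bigl\|\hat{u}_1(\omega)\bigr\|_{-s_1,\,I_1,\ast}^{2}\;\bigl\|u_2\bigr\|_{-s_2,\,I_2,\ast}^{2}.
\]
Multiplying by $|\omega|^{2r}$ and integrating along $\operatorname{Im}\omega=\sigma$ then yields the lemma.

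To prove the spatial estimate, I would unwrap the $\ast$-norm via extension by zero. Since $I_1\times I_2$ is a product of intervals and $\widetilde{H}^{-s_j}(I_j)$ is the space of distributions whose zero extension to $\mathbb{R}$ lies in $H^{-s_j}(\mathbb{R})$, the extension by zero commutes with the tensor product: $e_+\bigl(\hat{u}_1(\omega)\otimes u_2\bigr) = (e_+\hat{u}_1(\omega))\otimes(e_+u_2)$. Hence on the Fourier side,
\[
\mathcal{F}\bigl[e_+(\hat{u}_1(\omega)\otimes u_2)\bigr](\xi_1,\xi_2)
\;=\; \widehat{e_+\hat{u}_1(\omega)}(\xi_1)\,\widehat{e_+u_2}(\xi_2),
\]
so the squared $\ast$-norm on the left equals
\[
\int_{\R^2}\bigl(|\omega|^{2}+|\xi_1|^{2}+|\xi_2|^{2}\bigr)^{-s_1-s_2}\bigl|\widehat{e_+\hat{u}_1(\omega)}(\xi_1)\bigr|^{2}\bigl|\widehat{e_+u_2}(\xi_2)\bigr|^{2}\,d\xi_1\,d\xi_2.
\]

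The key pointwise inequality is the multiplicative splitting of the Bessel weight. For $s_1,s_2\geq 0$ and $|\omega|\geq\sigma$ one has
\[
\bigl(|\omega|^{2}+|\xi_1|^{2}+|\xi_2|^{2}\bigr)^{-s_1-s_2}
\leq \bigl(|\omega|^{2}+|\xi_1|^{2}\bigr)^{-s_1}\bigl(|\omega|^{2}+|\xi_2|^{2}\bigr)^{-s_2}
\leq C_\sigma \bigl(|\omega|^{2}+|\xi_1|^{2}\bigr)^{-s_1}\bigl(1+|\xi_2|^{2}\bigr)^{-s_2},
\]
where in the last step we used $|\omega|^{2}+|\xi_2|^{2}\geq \min(1,\sigma^{2})(1+|\xi_2|^{2})$. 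Substituting this into the double integral, the variables separate and the right-hand side becomes exactly $\|\hat{u}_1(\omega)\|_{-s_1,I_1,\ast}^{2}\,\|u_2\|_{-s_2,I_2,\ast}^{2}$ (with the convention that the reference frequency in the norm of $u_2$ is chosen so that the weight $(1+|\xi_2|^2)$ appears; any other choice is equivalent by the remark after Definition \ref{sobdef}).

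The only place the argument is delicate is the reconciliation of the two kinds of weights: the spatial norm $\|\cdot\|_{s,\Gamma,\ast}$ carries the $\omega$-dependent weight $(|\omega|^{2}+|\xi|^{2})^{s}$, while $\|u_2\|_{-s_2,I_2,\ast}$ is a pure spatial norm with weight $(1+|\xi_2|^{2})^{-s_2}$. This is precisely where the lower bound $|\omega|\geq\sigma>0$ and the sign conditions $s_1,s_2\geq0$ enter, producing a constant depending on $\sigma$. Once this splitting is established, integration in $\omega$ along $\operatorname{Im}\omega=\sigma$ against $|\omega|^{2r}$ only touches the first factor and yields $\|u_1\|_{r,-s_1,I_1,\ast}^{2}$, while $\|u_2\|_{-s_2,I_2,\ast}^{2}$ is a constant with respect to $\omega$ and passes through the integral. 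I expect the whole argument to take half a page once the weight-splitting inequality is isolated.
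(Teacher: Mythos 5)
Your proof is correct, and it is worth noting that the paper itself does not prove Lemma \ref{lemma3.3} at all: it simply recalls it from Lemmas 8 and 9 of \cite{graded}. What you have written is essentially the argument the paper uses for the companion results Lemma \ref{lemma3.3v} and Lemma \ref{lemma3.3av}, whose proofs consist precisely of a pointwise splitting of the Fourier weight; your chain $(|\omega|^{2}+\xi_1^{2}+\xi_2^{2})^{-s_1-s_2}\leq(|\omega|^{2}+\xi_1^{2})^{-s_1}(|\omega|^{2}+\xi_2^{2})^{-s_2}\lesssim_{\sigma}(|\omega|^{2}+\xi_1^{2})^{-s_1}(1+\xi_2^{2})^{-s_2}$ is valid for $s_1,s_2\geq 0$ and $|\omega|\geq\sigma$, and the reduction via $\mathcal{F}_t[u_1u_2]=\hat{u}_1u_2$, the compatibility of zero extension with the tensor product of distributions supported in $\overline{I_1}$, $\overline{I_2}$, and Fubini is sound. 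So your contribution is a self-contained proof where the paper delegates to the literature, obtained by the same weight-splitting device the paper reserves for its variants. Two minor remarks: your constant depends on $\sigma$ through $\min(1,\sigma^2)^{-s_2}$ (and through the choice of reference frequency in the time-independent norm of $u_2$), so strictly you prove $\lesssim_\sigma$ rather than $\lesssim$ — but the paper's own proofs of Lemmas \ref{lemma3.3v} and \ref{lemma3.3av} carry exactly the same implicit dependence, so this is consistent with its conventions; and since $-s_1-s_2$ may lie below $-1$, one should read the left-hand norm on the rectangle $I_1\times I_2$ through the explicit full-space Fourier weight applied to the zero extension, as you do, rather than through the chart-based definition, which the paper only claims to be chart-independent for $|s|\leq 1$.
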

For positive Sobolev indices one has:
\begin{lemma}\label{lemma3.3a}
	Let $r\geq 0$, $0\leq s\leq 1$, $I_j = [0, h_j],\; u_2\in\widetilde{H}^{s}(I_2)$, $u_1 \in {H}^r_\sigma(\mathbb{R}^+, \widetilde{H}^{s}(I_1))$. Then there holds
	\begin{equation*}	
		\| u_1(t,x) u_2(y) \|_{r, s, I_1\times I_2, \ast} \lesssim \| u_1\|_{r, s, I_1, \ast} \| u_2\|_{s,I_2,\ast}\ .
	\end{equation*}
\end{lemma}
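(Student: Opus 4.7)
The plan is to Laplace-transform in time, reducing the claim to a parameter-dependent spatial estimate, and then to exploit the subadditivity of $t\mapsto t^s$ for $0\leq s\leq 1$. Since $u_2$ is independent of $t$, the Laplace transform factors as $\widehat{u_1 u_2}(\omega,x,y) = \hat{u}_1(\omega,x)\, u_2(y)$. Substituting this into Definition \ref{sobdef} reduces the claim to the parameter-uniform spatial bound
$$\|\hat{u}_1(\omega,\cdot)\otimes u_2\|^2_{s,I_1\times I_2,\ast} \;\lesssim\; \|\hat{u}_1(\omega,\cdot)\|^2_{s,I_1,\ast}\; \|u_2\|^2_{s,I_2,\ast}$$
for $\omega$ on the Bromwich contour $\{\operatorname{Im}\omega = \sigma\}$; integrating against $|\omega|^{2r}\,d\omega$ then yields the lemma.

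For the spatial estimate I use the Fourier characterisation of $\widetilde{H}^s$ on the rectangle $I_1\times I_2$: since $0\leq s\leq 1$, a supported distribution extended by zero lies in $H^s(\mathbb{R}^2)$ with equivalent norm. Writing $f:=\hat{u}_1(\omega,\cdot)$, this gives
$$\|f\otimes u_2\|^2_{s,I_1\times I_2,\ast}\;\simeq\;\int_{\mathbb{R}^2}(|\omega|^2+|\xi|^2+|\eta|^2)^s\,|\hat{f}(\xi)|^2\,|\hat{u}_2(\eta)|^2\,d\xi\,d\eta.$$
The key step is the elementary subadditivity inequality $(a+b)^s\leq a^s+b^s$, valid for $a,b\geq 0$ and $0\leq s\leq 1$, applied with $a=|\omega|^2+|\xi|^2$ and $b=|\eta|^2$. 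The integrand splits into two separated pieces, Fubini factorises each, and bounding the resulting $L^2$ norms by the appropriate Sobolev norms -- in particular $\|f\|_{L^2}\lesssim_\sigma \|f\|_{s,I_1,\ast}$, which uses $|\omega|\geq\sigma>0$ on the contour -- yields the desired product estimate.

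The main technical obstacle is bookkeeping: making sure the parameter-dependent spatial norm on $I_1\times I_2$ (which carries $|\omega|$ inside the symbol), the unparametrised norm on $u_2$, and the lower bound $|\omega|\geq\sigma$ combine into a constant independent of $\omega$, so that integration in $\omega$ produces the stated bound with only a $\sigma$-dependent constant. Otherwise the argument is a direct Fourier-side manipulation, dual in structure to Lemma \ref{lemma3.3}, with subadditivity of $t\mapsto t^s$ for $s\in[0,1]$ replacing the superadditivity used there for negative indices.
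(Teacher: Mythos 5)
Your argument is correct: since $u_2$ is time-independent the transform in $t$ factorises, the zero extension of the tensor product is the tensor product of the zero extensions, and the pointwise bound $(|\omega|^2+|\xi|^2+|\eta|^2)^s\le(|\omega|^2+|\xi|^2)^s+|\eta|^{2s}$ combined with $|\omega|\ge\sigma$ on the contour (giving $\|\hat u_1(\omega,\cdot)\|_{L^2}\lesssim_\sigma\|\hat u_1(\omega,\cdot)\|_{s,I_1,\ast}$) yields the $\omega$-uniform spatial estimate, which integrated against $|\omega|^{2r}\,d\omega$ gives the claim, with a constant depending on $\sigma$ exactly as in the paper's conventions. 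The paper states Lemma \ref{lemma3.3a} without proof, but its proofs of the neighbouring variants (Lemmas \ref{lemma3.3v} and \ref{lemma3.3av}) are precisely such one-line Fourier-symbol estimates, and your route is of the same type; it could even be condensed to the single symbol inequality $(|\omega|^2+|\xi|^2+|\eta|^2)^s\lesssim_\sigma(|\omega|^2+|\xi|^2)^s(1+|\eta|^2)^s$, valid since $|\omega|^2+|\xi|^2\ge\sigma^2$.
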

We also note the variants:
\begin{lemma}\label{lemma3.3v}
	Let $r\geq 0$, $0\leq s\leq 1, \; u_2\in\widetilde{H}^{-s}(\Gamma)$, $u_1 \in \widetilde{H}^r_\sigma(\mathbb{R}^+)$. Then there holds
	\begin{equation*}	
		\| u_1(t) u_2(x,y) \|_{r, -s, \Gamma, \ast} \lesssim \| u_1\|_{H^r_\sigma(\mathbb{R}^+)} \| u_2\|_{-s, \Gamma, \ast} \ .
	\end{equation*}
\end{lemma}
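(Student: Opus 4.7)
The plan is to unpack the definitions in Definition~\ref{sobdef} and the discussion preceding it, and then exploit the tensor product structure $u(t,x,y)=u_1(t)u_2(x,y)$ to reduce the bound to an equality by direct computation; the inequality in the statement is actually an identity.

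First, I would observe that since $u_1$ depends only on $t$ while the spatial extension $e_+$ acts only in the spatial variables from $\Gamma$ to $\widetilde{\Gamma}$, one has $e_+\!\left[u_1(t)\,u_2(x,y)\right]=u_1(t)\,(e_+u_2)(x,y)$. Hence by the definition of the norm with supported distributions,
\begin{equation*}
\|u_1\,u_2\|_{r,-s,\Gamma,\ast}^2
=\int_{-\infty+i\sigma}^{+\infty+i\sigma}|\omega|^{2r}\,\|\widehat{u_1 (e_+u_2)}(\omega)\|_{-s,\widetilde{\Gamma}}^{\,2}\,d\omega.
\end{equation*}

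Next, I would use that the time Fourier(--Laplace) transform commutes with multiplication by the $t$-independent factor $e_+u_2$: since $e_+u_2\in H^{-s}(\widetilde{\Gamma})$ is a fixed element of a Hilbert space, $\widehat{u_1\,(e_+u_2)}(\omega)=\hat{u}_1(\omega)\cdot(e_+u_2)(x,y)$, so $\|\widehat{u_1\,(e_+u_2)}(\omega)\|_{-s,\widetilde{\Gamma}}^{\,2}=|\hat u_1(\omega)|^2\,\|e_+u_2\|_{-s,\widetilde{\Gamma}}^{\,2}$. Pulling the spatial factor out of the time integral gives
\begin{equation*}
\|u_1\,u_2\|_{r,-s,\Gamma,\ast}^2
=\|e_+u_2\|_{-s,\widetilde{\Gamma}}^{\,2}\int_{-\infty+i\sigma}^{+\infty+i\sigma}|\omega|^{2r}\,|\hat u_1(\omega)|^2\,d\omega
=\|u_2\|_{-s,\Gamma,\ast}^{\,2}\,\|u_1\|_{H^{r}_\sigma(\mathbb{R}^+)}^{\,2},
\end{equation*}
using the definitions $\|u_2\|_{-s,\Gamma,\ast}=\|e_+u_2\|_{-s,\widetilde{\Gamma}}$ and of $\|\cdot\|_{H^{r}_\sigma(\mathbb{R}^+)}$ recalled in the previous section.

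There is really no main obstacle here; the only point that requires a moment of care is that $e_+$ and the time Fourier transform commute, which is immediate from the tensor-product form and from $u_2$ lying in $\widetilde{H}^{-s}(\Gamma)$ (so $e_+u_2$ is a well-defined element of $H^{-s}(\widetilde{\Gamma})$ independent of $\omega$). Everything else is rearrangement of a product inside a one-dimensional Hilbert-space-valued norm, and in fact yields equality rather than just the $\lesssim$ stated in the lemma.
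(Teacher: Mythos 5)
Your argument has a genuine gap at its central step, and it shows up precisely in your claim that the bound is an identity. In Definition \ref{sobdef} the spatial norm appearing under the time-frequency integral is the $\omega$-dependent norm from the preceding display: for each $\omega$ on the line $\mathrm{Im}\,\omega=\sigma$, $\|\hat u(\omega)\|_{-s,\widetilde\Gamma}$ is computed with the coupled weight $(|\omega|^2+|\xi|^2)^{-s}$, where $\omega$ is the same variable being integrated. (The remark that the norms for different $\omega$ are equivalent does not help, since the equivalence constants are not uniform in $\omega$.) Consequently $\|\widehat{u_1(e_+u_2)}(\omega)\|_{-s,\widetilde\Gamma}^2$ is \emph{not} $|\hat u_1(\omega)|^2$ times an $\omega$-independent constant: it equals $|\hat u_1(\omega)|^2$ times $\int (|\omega|^2+|\xi|^2)^{-s}|\widehat{e_+u_2}(\xi)|^2 d\xi$, and this spatial factor varies with $\omega$ and cannot be pulled out of the $\omega$-integral. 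So your "equality" is false for these anisotropic norms; what is true, and what the lemma asserts, is only an inequality.

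The missing ingredient is exactly the pointwise symbol comparison used in the paper: since $-s\le 0$ and $|\omega|\ge\sigma$ on the contour, one has $(\sigma^2+|\omega|^2+\xi_1^2+\xi_2^2)^{-s/2}\lesssim_\sigma (1+\xi_1^2+\xi_2^2)^{-s/2}$, which bounds the $\omega$-dependent spatial factor by the fixed $\widetilde H^{-s}$ norm of $e_+u_2$ and then lets you factor the double integral, giving $\lesssim$ rather than $=$. A useful sanity check that your exact factorization cannot be right: the same argument applied verbatim with $+s$ in place of $-s$ would prove Lemma \ref{lemma3.3av} with $\|u_1\|_{H^r_\sigma(\mathbb{R}^+)}$ on the right-hand side, whereas that lemma genuinely requires the stronger norm $\|u_1\|_{H^{r+s}_\sigma(\mathbb{R}^+)}$, reflecting the coupling of $\omega$ and $\xi$ in the weight. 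Your reductions $e_+[u_1u_2]=u_1\,e_+u_2$ and the commutation of the time Fourier transform with the fixed spatial factor are fine; once you insert the weight estimate above, your computation becomes the paper's proof.
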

\begin{proof}
This is a consequence of the estimate $${(\sigma^2+|\omega|^2)^{r/2}}(\sigma^2+|\omega|^2 + \xi_1^2 + \xi_2^2)^{-s/2} \lesssim  {(\sigma^2+|\omega|^2)^{r/2}}(1+ \xi_1^2+\xi_2^2 )^{-s/2}\ $$
in Fourier space.
\end{proof}
We note a similar result for positive Sobolev indices:
\begin{lemma}\label{lemma3.3av}
	Let $r\geq 0$, $0\leq s\leq 1$, $u_2\in\widetilde{H}^{s}(\Gamma)$, $u_1 \in {H}^r_\sigma(\mathbb{R}^+)$. Then there holds
	\begin{equation*}	
		\| u_1(t) u_2(x,y) \|_{r, s, \Gamma, \ast} \lesssim \| u_1\|_{H^{r+s}_\sigma(\mathbb{R}^+)} \| u_2\|_{s,\Gamma,\ast}\ .
	\end{equation*}
\end{lemma}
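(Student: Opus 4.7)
The plan is to mirror the Fourier-side argument used for Lemma \ref{lemma3.3v} above, except that the direction of the symbol inequality is now reversed because the Sobolev index in the spatial variable is positive. First I would unfold the $\ast$-norm on the left-hand side: since $u_1(t) u_2(x,y)$ factorises, the Laplace transform in time commutes with the partition-of-unity/local-chart spatial Fourier transform used to define $\|\cdot\|_{s,\widetilde\Gamma}$, giving
$$\mathcal{F}\{\alpha_i\, e_+(u_1 u_2)\circ\phi_i^{-1}\}(\omega_0+i\sigma,\xi)\ =\ \hat u_1(\omega_0+i\sigma)\,\mathcal{F}\{\alpha_i\, e_+ u_2\circ \phi_i^{-1}\}(\xi)\ .$$
Hence $\|u_1 u_2\|_{r,s,\Gamma,\ast}^2$ is the double integral in $(\omega_0,\xi)$, weighted by $(\sigma^2+\omega_0^2)^r(\sigma^2+\omega_0^2+|\xi|^2)^s$, of $|\hat u_1(\omega_0+i\sigma)|^2$ times the sum over $i$ of $|\mathcal{F}\{\alpha_i e_+ u_2\circ\phi_i^{-1}\}(\xi)|^2$.

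The heart of the argument is then the pointwise symbol estimate
$$(\sigma^2+\omega_0^2)^{r/2}\,(\sigma^2+\omega_0^2+|\xi|^2)^{s/2}\ \lesssim_\sigma\ (\sigma^2+\omega_0^2)^{(r+s)/2}\,(1+|\xi|^2)^{s/2}\ ,$$
valid for $r\geq 0$ and $0\leq s\leq 1$. I would establish it by combining the subadditivity $(a+b)^{s/2}\leq a^{s/2}+b^{s/2}$ (which holds since $s/2\leq 1$), applied with $a=\sigma^2+\omega_0^2$ and $b=|\xi|^2$, with the lower bound $\sigma^2+\omega_0^2\geq \sigma^2 > 0$, to trade the stand-alone $|\xi|^s$ summand for a multiple of $(\sigma^2+\omega_0^2)^{s/2}(1+|\xi|^2)^{s/2}$ at the cost of an implied constant depending on $\sigma$.

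Inserting this bound into the double integral and applying Fubini yields the factorised estimate $\|u_1\|^2_{H^{r+s}_\sigma(\mathbb{R}^+)}\,\|u_2\|^2_{s,\Gamma,\ast}$. Here I use the equivalence of the $H^s(\widetilde\Gamma)$-norms associated to different base frequencies $\omega\in\mathbb{C}\setminus\{0\}$, noted after the definition of $\|\cdot\|_{s,\widetilde{\Gamma}}$, so that the purely spatial Sobolev norm of $u_2$ may be computed with the standard weight $(1+|\xi|^2)^s$.

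I do not anticipate a serious obstacle. The only point requiring mild care is the symbol inequality in the regime where $|\xi|$ is large compared with $\sigma^2+\omega_0^2$: there one must exploit $\sigma>0$ to keep the implicit constant finite, which is consistent with the $\sigma$-dependent $\lesssim$ convention already in force throughout this section.
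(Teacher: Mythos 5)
Your proposal is correct and follows essentially the same route as the paper, whose entire proof is the pointwise symbol estimate $(\sigma^2+|\omega|^2)^{r/2}(\sigma^2+|\omega|^2+|\xi|^2)^{s/2}\lesssim_\sigma(\sigma^2+|\omega|^2)^{(r+s)/2}(1+|\xi|^2)^{s/2}$ in Fourier space; you merely spell out the tensor-product factorization, the elementary proof of the symbol bound via subadditivity and $\sigma^2+\omega_0^2\geq\sigma^2>0$, and the Fubini step. The $\sigma$-dependence of the constant you flag is consistent with the paper's conventions.
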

\begin{proof}
This is a consequence of the estimate $${(\sigma^2+|\omega|^2)^{r/2}}(\sigma^2+|\omega|^2 + \xi_1^2 + \xi_2^2)^{s/2} \lesssim (\sigma^2+|\omega|^2  )^{{(r+s)}/2} (1+ \xi_1^2+ \xi_2^2 )^{s/2}$$ in Fourier space.
\end{proof}

\section{Discretization}
\label{Discretisationp}

For the time discretization we consider a uniform decomposition of the time interval $[0,\infty)$ into subintervals $[t_{n-1}, t_n)$ with time step $\Delta t$, such that $t_n=n\Delta t \; (n=0,1,\dots)$. \\

In $\mathbb{R}^3$, we assume that $\Gamma$ consists of } closed triangular faces $\Gamma_i$ such that $\Gamma=\cup_{i} \Gamma_i$. \textcolor{black}{Let $\mathcal{T}_S={\{\Delta_1,\cdots,\Delta_{N}\}}$ be a quasi-uniform triangulation of $\Gamma$ and $\mathcal{T}_T=\{[0,t_1),[t_1,t_2),\cdots,$ $[t_{M-1},T)\}$ the time mesh for a finite subinterval $[0,T)$.} We choose a basis $\{ \xi_h^1 , \cdots , \xi_h^{N_s}\}$ of the space $V_h^q(\Gamma)$ of piecewise polynomial functions \textcolor{black}{on $\mathcal{T}_S$} of degree $q\geq 0$ in
space \textcolor{black}{(not necessarily continuous)}. Moreover, we define $\widetilde{V}_h^q(\Gamma)$ as the subspace of $V_h^q(\Gamma)$, where the piecewise polynomials \textcolor{black}{are continuous and} vanish on $\partial \Gamma$ for $q\geq 1$. \textcolor{black}{The parameter $h$ denotes the maximal diameter of a triangle in $\mathcal{T}_S$.} For the time discretization we choose a basis $\{\beta_{\Delta t}^1,\cdots,\beta_{\Delta t}^{N_t}\}$  of  the space $V^p_{t}$ of piecewise  polynomial  functions of degree of $p$ in time (continuous and vanishing at $t=0$ if $p\geq 1$).
\\

We consider the tensor product of the approximation spaces in space and time, $V_h^q$ and $V^p_{\Delta t}$, associated to the space-time mesh $\mathcal{T}_{S,T}=\mathcal{T}_S \times\mathcal{T}_T$, and we write
\begin{align}\label{fespac}
V_{\Delta t,h}^{p,q}:=  V_{\Delta t}^p \otimes V_{h}^q\ .
\end{align}
We analogously define
\begin{align}\label{fespac2}
\tilde{V}_{\Delta t,h}^{p,q}:=  V_{\Delta t}^p \otimes \tilde{V}_{h}^q\ .
\end{align}

The Galerkin discretization of the Dirichlet problem \eqref{weakform} is then given by:\\

\noindent Find $\psi_{\Delta t, h} \in V_{\Delta t,h}^{p,q}$ such that for all $\Psi_{\Delta t, h}\in V_{\Delta t,h}^{p,q}$
\begin{equation}\label{weakformhp}
 \int_0^\infty\int_\Gamma (V \psi_{\Delta t, h}(t,{x})) \partial_t\Psi_{\Delta t, h}(t,{x}) \ dx\ d_\sigma t = \int_0^\infty \int_\Gamma f(t,{x}) \partial_t\Psi_{\Delta t, h}(t,{x}) \ dx\ d_\sigma t\ .
\end{equation}

For the Neumann problem \eqref{weakformW}, we have:\\

\noindent Find $\phi_{\Delta t, h} \in \widetilde{V}_{t,h}^{p,q}$ such that for all $\Phi_{\Delta t, h}\in \widetilde{V}_{t,h}^{p,q}$
\begin{equation}\label{weakformWhp}
 \int_0^\infty\int_\Gamma (W \phi_{\Delta t, h}(t,{x})) \partial_t\Phi_{\Delta t, h}(t,{x}) \ dx\ d_\sigma t = \int_0^\infty \int_\Gamma g(t,{x}) \partial_t\Phi_{\Delta t, h}(t,{x}) \ dx\ d_\sigma t\ .
\end{equation}

From the weak coercivity of $V$, respectively $W$, the discretized problems \eqref{weakformhp} and \eqref{weakformWhp} admit unique solutions.\\

Practical computations use $\sigma=0$. \textcolor{black}{The resulting system of equations in this case has a block-Toeplitz structure which requires to compute only one matrix per time step, unlike for $\sigma>0$. See \cite{jr} for a detailed analysis of the role of the weight $\sigma$.}

\subsection{Approximation properties}

While we use triangular meshes in our computations, for the ease of presentation we first discuss the approximation properties of meshes with rectangular elements. Reference \cite{disspetersdorff} shows how to deduce approximation results on triangular meshes from the rectangular case. 

Key ingredients in our analysis are projections from $L^2(\Gamma)$ onto $V_h^p$. We collect some  key approximation properties used below, which are proven analogous to \cite[Proposition 3.54 and 3.57]{glaefke}, see also \cite{HGEPSN} for screens.

We recall the well-known results for $V_h^p$ and $V^q_{\Delta t}$, which we are going to need. See, for example, \textcolor{black}{\cite{BamHa} for the following Lemma \ref{proj_time}. The second inequality in Lemma \ref{proj_space} may be found as Theorem 4.1 in \cite{bh08}; it implies the first one.}
\begin{lemma}\label{proj_time}
Let $\Pi_{\Delta t}^q$ the orthogonal projection from $L^2(\mathbb{R}^+)$ to $V^q_{\Delta t}$ and $m \leq q$. Then for $s \in [-\frac{1}{2}, \frac{1}{2}]$
\begin{equation*}
 ||f-\Pi_{\Delta t}^qf||_{H^r_\sigma(\mathbb{R}^+)}\textcolor{black}{\lesssim} \left(\frac{\Delta t}{q+1}\right)^{q+1-s} \|f\|_{H^{q+1}_\sigma(\mathbb{R}^+)}\,.
\end{equation*}
\textcolor{black}{for all $f \in H^{q+1}_\sigma(\mathbb{R}^+)$.}
\end{lemma}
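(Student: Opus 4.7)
The plan is to reduce the weighted global estimate to a scale-invariant approximation bound on a reference interval via affine scaling, apply the classical $p$-explicit polynomial approximation theorem there, sum over time intervals, and then use duality to cover the negative Sobolev indices.

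First, since $V^q_{\Delta t}$ consists of piecewise polynomials on the time intervals $I_n=[t_{n-1},t_n]$, the projection $\Pi^q_{\Delta t}$ splits as the direct sum of local $L^2(I_n)$-projections $\Pi^q_n$ onto $\mathcal{P}_q(I_n)$. Mapping $I_n$ affinely to $[0,1]$ via $\hat f(\tau):=f(t_{n-1}+\tau\Delta t)$, I obtain the scalings
\begin{equation*}
\|\partial_\tau^k\hat f\|_{L^2(0,1)}^2=(\Delta t)^{2k-1}\|\partial_t^k f\|_{L^2(I_n)}^2,
\end{equation*}
so that $\Pi^q_n$ is the pull-back of the reference projector $\hat\Pi^q$, and Sobolev norms of the error transform as $\|\hat g\|_{H^s(0,1)}\simeq(\Delta t)^{s-\frac12}\|g\|_{H^s(I_n)}$ to leading order.

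Second, I invoke the classical $p$-explicit approximation estimate on the reference interval (Babu\v{s}ka--Suri type, see also \cite{schwab}): for $s\in[0,q+1]$ and $\hat f\in H^{q+1}(0,1)$,
\begin{equation*}
\|(I-\hat\Pi^q)\hat f\|_{H^s(0,1)}\lesssim(q+1)^{-(q+1-s)}\,\|\hat f\|_{H^{q+1}(0,1)}.
\end{equation*}
Undoing the scaling produces the local bound
\begin{equation*}
\|(I-\Pi^q_{\Delta t})f\|_{H^s(I_n)}\lesssim\Bigl(\tfrac{\Delta t}{q+1}\Bigr)^{q+1-s}\|f\|_{H^{q+1}(I_n)}.
\end{equation*}
Squaring and summing over $n\in\mathbb{N}$ yields the global unweighted estimate for $s\in[0,\tfrac12]$. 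The exponential weight is handled through the equivalence $\|u\|_{H^r_\sigma(\mathbb{R}^+)}\simeq_\sigma \|e^{-\sigma t}u\|_{H^r(\mathbb{R})}$: on each $I_n$ the weight is comparable to the constant $e^{-\sigma t_{n-1}}$ with multiplicative variation of order $\sigma\Delta t$, so the local reference bounds transfer to the weighted norm with constants depending on $\sigma$ but not on $\Delta t$ or $q$.

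Third, the negative range $s\in[-\tfrac12,0)$ follows by duality: using self-adjointness of $\Pi^q_{\Delta t}$ in $L^2$,
\begin{equation*}
\|(I-\Pi^q_{\Delta t})f\|_{H^s_\sigma}=\sup_{g\neq 0}\frac{\langle(I-\Pi^q_{\Delta t})f,(I-\Pi^q_{\Delta t})g\rangle_\sigma}{\|g\|_{H^{-s}_\sigma}},
\end{equation*}
and Cauchy--Schwarz combined with the $L^2$-estimate applied to $f$ (gaining a factor $(\Delta t/(q+1))^{q+1}$) and the positive-index estimate applied to $g$ at order $-s$ (gaining $(\Delta t/(q+1))^{-s}$) produces the rate $q+1-s$. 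The main obstacle is the sharp $p$-explicit constant $(q+1)^{-(q+1)}$ on the reference interval, which rests on a delicate Legendre-expansion argument with Stirling-type asymptotics; once this is in hand, the $\Delta t$-scaling, the treatment of the weight $e^{-\sigma t}$ and the duality step for $s<0$ are all standard.
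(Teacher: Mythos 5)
The paper itself does not prove Lemma \ref{proj_time}; it simply refers to \cite{BamHa}, so there is no in-paper argument to compare with, and your proposal must be judged on its own terms. Its core step has a genuine gap. You assume that the $L^2(\mathbb{R}^+)$-orthogonal projection onto $V^q_{\Delta t}$ splits into independent local projections onto $\mathcal{P}_q(I_n)$. In this paper $V^q_{\Delta t}$ is, for $q\ge 1$, a space of \emph{continuous} piecewise polynomials (vanishing at $t=0$), so the orthogonal projection is a global operator and does not decompose. If you instead work with the discontinuous space, so that locality does hold, then $f-\Pi^q_{\Delta t}f$ generically has nonzero jumps at the knots $t_n$ and hence does not lie in $H^{1/2}$ at all, so the endpoint $s=\tfrac12$ of the lemma is unreachable by your route. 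Moreover, even for $0<s<\tfrac12$ the step ``squaring and summing over $n$'' is not valid as written: the fractional Sobolev norm is not subadditive with respect to a partition of $\mathbb{R}^+$ into intervals, since the off-diagonal contributions to the Aronszajn--Slobodeckij double integral are not controlled by the local $H^s(I_n)$ norms. One needs a localization in supported ($\widetilde{H}^s$-type) norms in the spirit of Lemma \ref{lemma3.2}, together with control of the error near the knots, and the constants in such a localization degenerate as $s\to\tfrac12$ when the local errors do not vanish there. This is a missing idea, not a routine detail.

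Two further points need repair. First, the reference-interval bound you quote for $\hat\Pi^q$ in positive-order norms is not the classical Babu\v{s}ka--Suri theorem: those results concern a specially constructed simultaneous approximant (or an $H^1$-type projection), whereas the known estimates for the Legendre $L^2$-truncation in $H^1$ lose a factor of order $q^{1/2}$ compared with best approximation; quasi-optimality of the $L^2$-orthogonal projection in $H^s$ for $0<s\le\tfrac12$ therefore cannot simply be cited and would have to be proved, or the lemma reformulated as a best-approximation estimate. Second, in the duality step you invoke self-adjointness of $\Pi^q_{\Delta t}$ with respect to the weighted pairing, but it is self-adjoint only for the unweighted $L^2$ pairing; the identity $\langle (I-\Pi^q_{\Delta t})f,g\rangle_\sigma=\langle (I-\Pi^q_{\Delta t})f,(I-\Pi^q_{\Delta t})g\rangle_\sigma$ requires inserting the factor $e^{-2\sigma t}$ and using that $e^{-\sigma t}$ is a bounded multiplier on $H^r$, $|r|\le 1$, on the half-line --- fixable, but not automatic as written. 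Likewise, freezing the weight on each $I_n$ costs a factor $e^{\sigma\Delta t}$, so your constants are uniform only under the unstated (though mild) assumption $\sigma\Delta t\lesssim 1$.
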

\begin{lemma}\label{proj_space}
Let $\widetilde{\Pi}_{h,x}^p$ the orthogonal projection from $L^2(\Gamma)$ to $\widetilde{V}_h^p$ and $m\leq p$. Then for $\varepsilon>0$ and $s \in [-1,0]$ we have in the norms of $H^{s}(\Gamma)$ respectively $\widetilde{H}^{s}(\Gamma)$:
\begin{align}
 ||f-\widetilde{\Pi}_{h,x}^pf||_{s, \Gamma} &\leq C\left(\frac{h}{p+1}\right)^{m+1-s} \textcolor{black}{\|}f\textcolor{black}{\|}_{m+1, \Gamma}\ 
\end{align}
\textcolor{black}{for all $f \in H^{m+1}(\Gamma)$,}
\begin{align*}
 ||f-\widetilde{\Pi}_{h,x}^pf||_{s, \Gamma,\ast} &\leq C \left(\frac{h}{p+1}\right)^{m+1-s}\textcolor{black}{\|}f\textcolor{black}{\|}_{m+1, \Gamma}\ 
\end{align*}
for all $f \in H^{m+1}(\Gamma)\cap \widetilde{H}^{s}(\Gamma)$.
\end{lemma}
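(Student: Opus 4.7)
The plan is to first establish the second (stronger) inequality in the $\|\cdot\|_{s,\Gamma,\ast}$-norm and then deduce the first as an immediate consequence. For the reduction, observe that the quotient norm $\|g\|_{s,\Gamma} = \inf_{v \in \widetilde{H}^s(\widetilde{\Gamma}\setminus\overline{\Gamma})} \|g+v\|_{s,\widetilde{\Gamma}}$ is realized by any admissible extension of $g$ to $\widetilde{\Gamma}$. For $s \in [-1,0]$ every $g \in L^2(\Gamma)$ extends by zero to an element $e_+ g \in L^2(\widetilde{\Gamma}) \hookrightarrow H^s(\widetilde{\Gamma})$, so $\|g\|_{s,\Gamma} \leq \|e_+ g\|_{s,\widetilde{\Gamma}} = \|g\|_{s,\Gamma,\ast}$. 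Taking $g = f - \widetilde{\Pi}_{h,x}^p f \in L^2(\Gamma)$ reduces the first inequality to the second, regardless of whether $f$ itself lies in $\widetilde{H}^s(\Gamma)$ (which is automatic for $s\leq 0$ and $f\in L^2(\Gamma)$).

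For the second estimate I would follow the standard three-step $hp$-argument for quasi-uniform meshes, as in \cite{bh08}. First, on the reference element $\hat{K}=(0,1)^2$, construct a polynomial approximant $\hat{\Pi}_p \hat{f}$ of degree $\leq p$ whose trace on $\partial\hat{K}$ matches that of its neighbours (so that the global assembly lies in $\widetilde{V}_h^p$) and which satisfies the $p$-version bound
$$\|\hat{f} - \hat{\Pi}_p \hat{f}\|_{s,\hat{K},\ast} \lesssim (p+1)^{-(m+1-s)} \|\hat{f}\|_{m+1,\hat{K}}$$
for $-1 \leq s \leq 0 \leq m+1$, $m \leq p$. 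This is precisely Theorem~4.1 of \cite{bh08}, proved by splitting $\hat{f}$ into interior, edge-bubble and vertex contributions and expanding each in tensor products of shifted Jacobi polynomials, whose weighted orthogonality yields sharp coefficient estimates. Second, an affine push-forward to an element of diameter $h$ introduces the factor $h^{m+1-s}$, producing the local estimate at the announced rate $(h/(p+1))^{m+1-s}$. Third, summing over the triangulation and invoking the localization bound \eqref{3.21b} of Lemma~\ref{lemma3.2} yields a global quasi-interpolant in $\widetilde{V}_h^p$ with the desired rate; the $L^2$-projection $\widetilde{\Pi}_{h,x}^p$ is the best approximation in $L^2(\Gamma)$, which settles $s=0$, and the negative-index range is obtained by an Aubin--Nitsche duality against $H^{-s}(\Gamma)$ combined with the positive-index projection estimates.

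The main obstacle lies in the reference-element construction: a single polynomial $\hat{\Pi}_p \hat{f}$ must simultaneously (i) have the correct (possibly vanishing) trace on $\partial\hat{K}$ to produce a continuous element of $\widetilde{V}_h^p$ upon global assembly, and (ii) attain the interior rate $(p+1)^{-(m+1)}$ when $\hat{f}$ itself does not vanish on $\partial\hat{K}$. This tension is resolved in \cite{bh08} by a splitting-and-lifting argument based on Jacobi-polynomial expansions with carefully chosen weights; the duality argument used to descend to negative $s$ is also what forces the right-hand side to involve the $H^{m+1}$-norm rather than the stronger $\widetilde{H}^{m+1}$-norm.
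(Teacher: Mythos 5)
Your proposal is correct and follows essentially the same route as the paper, which simply invokes Theorem~4.1 of \cite{bh08} for the second ($\ast$-norm) estimate and observes that it implies the first; your zero-extension argument $\|g\|_{s,\Gamma}\leq\|e_+g\|_{s,\widetilde{\Gamma}}=\|g\|_{s,\Gamma,\ast}$ for $s\leq 0$ correctly supplies the detail of that implication. The reference-element/Jacobi-polynomial/duality sketch you give is an accurate account of the cited proof rather than a new argument.
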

%The second estimate for $\partial\Gamma \neq \emptyset$ follows by extending $\Pi^p_{h,x} f \in V_h^p$ by zero outside $\Gamma$ which allows to estimate the $\widetilde{H}^{\pm \frac{1}{2}}$ norm on the left hand side by standard Sobolev norms.\\

Combining $\widetilde{\Pi}_{h, x}^p$ and $\Pi_{\Delta t}^q$ one obtains as in Proposition 3.54 of \cite{glaefke}:
\begin{lemma}\label{approxprop}
Let $f \in H^{s}_\sigma(\mathbb{R}^+, H^m(\Gamma){\cap \widetilde{H}^{r}(\Gamma)})$, $0<m\leq p+1$, $0<s\leq p+1$, $r\leq s$, $|l|\leq \frac{1}{2}$ such that $lr\geq 0$. Then if  $ l,r\leq 0$ and $\varepsilon>0$
\begin{align}\label{eq:approx}
\|f-\widetilde{\Pi}_{h,x}^p  \Pi_{\Delta t}^p f\|_{r,l,\Gamma} &\leq C \left(\left(\frac{h}{p+1}\right)^\alpha + \left(\frac{\Delta t}{p+1}\right)^\beta\right)||f||_{s,m,\Gamma}\ ,\\
\|f-\widetilde{\Pi}_{h,x}^p  \Pi_{\Delta t}^p f\|_{r,l,\Gamma,\ast} &\leq C  \left(\left(\frac{h}{p+1}\right)^{\alpha-\varepsilon} + \left(\frac{\Delta t}{p+1}\right)^\beta\right)||f||_{s,m,\Gamma}\ ,
\end{align}
where $\alpha = \min\{m-l, m-\frac{m(l+r)}{m+s}\}$, $\beta = \min\{m+s-(l+r), m+s-\frac{m+s}{m}l\}$. If $l,r>0$, $\beta = m+s-(l+r)$.
\end{lemma}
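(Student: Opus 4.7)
My plan is to adapt the approach of \cite[Proposition 3.54]{glaefke}: split the error via the triangle inequality, apply Lemmas \ref{proj_time} and \ref{proj_space} in Fourier-time, and recover the two candidate rates for $\alpha$ and $\beta$ by H\"older-interpolating the direct approximation estimates against the stability of $\widetilde{\Pi}^p_{h,x}$, respectively $\Pi^p_{\Delta t}$, in the same Sobolev norm.

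First I would decompose
$$f - \widetilde{\Pi}_{h,x}^p\Pi_{\Delta t}^p f \;=\; (I-\widetilde{\Pi}_{h,x}^p)f \;+\; \widetilde{\Pi}_{h,x}^p(I - \Pi_{\Delta t}^p) f.$$
For the spatial summand, time-Fourier transform (Definition \ref{sobdef}) turns $\|(I-\widetilde{\Pi}^p_{h,x})f\|_{r,l,\Gamma}^2$ into $\int|\omega|^{2r}\|(I-\widetilde{\Pi}^p_{h,x})\hat{f}(\omega)\|_{l,\Gamma}^2\,d\omega$. Lemma \ref{proj_space} applied slicewise in $\omega$, together with $|\omega|^{2r}\le\sigma^{2(r-s)}|\omega|^{2s}$ on the contour $\mathrm{Im}\,\omega=\sigma$ (valid since $r\le s$ and $|\omega|\ge\sigma$), produces the first candidate $(h/(p+1))^{m-l}\|f\|_{s,m,\Gamma}$ for $\alpha$. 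The second candidate $m-\tfrac{m(l+r)}{m+s}$ emerges from interpolating this approximation estimate against the stability bound $\|(I-\widetilde{\Pi}_{h,x}^p)g\|_{l,\Gamma}\lesssim\|g\|_{l,\Gamma}$, which holds on quasi-uniform meshes for $l\in[-1,0]$: raising the approximation bound to a power $\theta$ and stability to $1-\theta$, applying H\"older in $\omega$ with exponents $1/\theta$ and $1/(1-\theta)$, splitting the time weight $|\omega|^{2r}$ accordingly so the two factors embed into $\|f\|_{s,m,\Gamma}$ (repeatedly using $\|f\|_{r',l',\Gamma}\lesssim\|f\|_{s,m,\Gamma}$ for $r'\le s$, $l'\le m$), and optimising $\theta\in[0,1]$ yields precisely the stated exponent. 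The $\min$ in $\alpha$ then records whichever of the two interpolation weights is sharper in the given parameter regime.

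The time summand is treated symmetrically: since $\widetilde{\Pi}^p_{h,x}$ is uniformly bounded on $H^l(\Gamma)$, Lemma \ref{proj_time} applied in the Bochner space $H^r_\sigma(\mathbb{R}^+;H^l(\Gamma))$ gives the direct rate $(\Delta t/(p+1))^{s-r}\|f\|_{s,l,\Gamma}$, and H\"older-interpolation against the same-norm stability of $\Pi^p_{\Delta t}$ produces the second candidate $m+s-\tfrac{m+s}{m}l$ for $\beta$. For the starred second inequality, $\widetilde{\Pi}^p_{h,x}$ is only bounded on $\widetilde{H}^l$ up to a loss $(h/(p+1))^{-\varepsilon}$ (because projected functions need not vanish on $\partial\Gamma$), which accounts for the extra $\varepsilon$ in $(h/(p+1))^{\alpha-\varepsilon}$; I would invoke the analogous star-norm argument developed in \cite{HGEPSN}. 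The appended case $l,r>0$ is different because the stability bounds in positive-index spaces do not improve the interpolation, so only the direct exponent $\beta = m+s-(l+r)$ survives.

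The main obstacle will be executing the Fourier-side interpolation precisely enough to extract exactly the stated min-exponents rather than a weaker interpolated rate, and tracking the $\varepsilon$-loss consistently through the star-norm manipulations for negative Sobolev indices on the screen. Once the balance of H\"older exponents is set up correctly, the remainder is bookkeeping along the lines of \cite[Proposition 3.54]{glaefke} and its screen adaptation in \cite{HGEPSN}.
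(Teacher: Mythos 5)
The paper itself does not prove this lemma; it only points to Proposition 3.54 of \cite{glaefke} (and \cite{HGEPSN} for screens), so there is no in-paper argument to compare line by line. Your skeleton --- triangle inequality $f-\widetilde{\Pi}_{h,x}^p\Pi_{\Delta t}^pf=(I-\widetilde{\Pi}_{h,x}^p)f+\widetilde{\Pi}_{h,x}^p(I-\Pi_{\Delta t}^p)f$, then Lemmas \ref{proj_time} and \ref{proj_space} applied on the Fourier--Laplace side --- is indeed the structure of the cited proof. But the mechanism you propose for the exponents has a genuine gap in the temporal term. Your ``direct'' rate for $\widetilde{\Pi}_{h,x}^p(I-\Pi_{\Delta t}^p)f$ is $(\Delta t/(p+1))^{s-r}\|f\|_{s,l,\Gamma}$, obtained by treating $H^r_\sigma(\mathbb{R}^+;H^l(\Gamma))$ as a Bochner space with an $\omega$-independent spatial norm. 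The stated exponents $\beta=\min\{m+s-(l+r),\,m+s-\tfrac{m+s}{m}l\}$ exceed $s-r$ by roughly $m-l$, and interpolating a direct estimate against a \emph{stability} bound (exponent $0$) can only produce exponents \emph{between} $0$ and the direct one --- it can never raise $s-r$ to $m+s-(l+r)$. The missing idea is the coupling of space and time frequencies built into Definition \ref{sobdef}: for $l\le 0\le m$ one has $(|\omega|^2+|\xi|^2)^{l}\le|\omega|^{2(l-m)}(|\omega|^2+|\xi|^2)^{m}$, so the temporal projection error measured in $\|\cdot\|_{r,l,\Gamma}$ can be estimated at the effective temporal index $r+l-m$ against data in $\|\cdot\|_{s,m,\Gamma}$, which is exactly what yields $(\Delta t)^{m+s-(l+r)}$ and explains why the right-hand side carries $\|f\|_{s,m,\Gamma}$ and not $\|f\|_{s,l,\Gamma}$. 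The same space--time trade (not a H\"older/stability interpolation in $\omega$) is the source of the mixed exponent $m-\tfrac{m(l+r)}{m+s}$ in $\alpha$; your spatial argument, by contrast, only reproduces the candidate $m-l$, which is harmless for the claim but shows the interpolation step is not doing the work you attribute to it.

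Two smaller points. First, your explanation of the $\varepsilon$-loss in the starred estimate is based on a false premise: by construction $\widetilde{\Pi}_{h,x}^p$ maps into $\widetilde{V}_h^p$, whose elements are continuous and \emph{do} vanish on $\partial\Gamma$, so ``projected functions need not vanish on $\partial\Gamma$'' is not the reason; the loss comes from handling the $\widetilde{H}^{l}$ (star) norms at negative order, e.g.\ via element-wise localization as in Lemma \ref{lemma3.2} and the endpoint behaviour of extension by zero, as in \cite{HGEPSN}. Second, the case distinction $l,r>0$ versus $l,r\le0$ and the restricted ranges in Lemmas \ref{proj_time} and \ref{proj_space} ($|l|\le\tfrac12$, $s\in[-1,0]$) need to be tracked explicitly, since they determine which of the two candidates in each minimum is actually attainable; this bookkeeping is where the minima in $\alpha$ and $\beta$ come from, not from optimizing a H\"older parameter.
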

\textcolor{black}{Lemma \ref{approxprop} is mostly applied} for $\Delta t \lesssim h$, when $\Delta t$ may be replaced by $h$. \textcolor{black}{The first inequalities in Lemma \ref{proj_space} and Lemma \ref{approxprop} hold verbatim also for the orthogonal projection $\Pi_{h,x}^p$  from $L^2(\Gamma)$ to $V_h^p$.}

The proof of the following result is given in  \cite[Theorem 3.1]{heuer01}  \textcolor{black}{for the $p$-version and in \cite[Theorem 3.3]{besp} for $hp$:} 
\begin{lemma}\label{keylemmagrad}For $\varepsilon > 0$, $a<1$ and $s \in [-1, \min\{-a+\frac{1}{2},0\})$ there holds with the piecewise polynomial \textcolor{black}{Lagrange} interpolant of degree $p$, $ \Pi_{h,y}^{p} y^{-a}$, of $ y^{-a}$ \textcolor{black}{on a quasi-uniform mesh of mesh size $h$}:
$$\|y^{-a} - \Pi_{h,y}^{p} y^{-a}\|_{s,[0,1],\ast} \lesssim  \left(\frac{h}{(p+1)^2}\right)^{-a+\frac{1}{2}-s-\varepsilon} . $$\ 
\end{lemma}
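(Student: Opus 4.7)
The plan is to adapt the proof strategy of \cite[Theorem~3.1]{heuer01} (for the $p$-version) and \cite[Theorem~3.3]{besp} (for the $hp$-version) to the present one-dimensional setting. The argument splits the interval $[0,1]$ into the singular element $[0,h]$ at the origin and the remaining elements on which $y^{-a}$ is smooth, and then combines an affine rescaling with a sharp $p$-version estimate on the reference element.

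First, by the localization inequality \eqref{3.21b} of Lemma \ref{lemma3.2}, it suffices to bound $\|y^{-a}-\Pi_{h,y}^{p}y^{-a}\|_{s,[0,h],\ast}$ separately from the contributions on the ``smooth'' elements $[ih,(i+1)h]$, $i\geq 1$. On each such element, $y^{-a}$ is analytic with $|(y^{-a})^{(k)}| \lesssim (ih)^{-a-k}$, so a standard local $hp$ interpolation estimate yields a bound that, after summing in $i$, is dominated by the one coming from the singular element.

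Second, on $[0,h]$ I would apply the affine change of variables $y=h\tilde y$. Under this map $y^{-a}=h^{-a}\tilde y^{-a}$, the Lagrange interpolant commutes with the scaling, and for $s\in[-1,0]$ one has the scaling law $\|\cdot\|_{s,[0,h],\ast} \simeq h^{\frac12-s}\|\cdot\|_{s,[0,1],\ast}$. Hence the estimate reduces to the pure $p$-version bound
\[
\|\tilde y^{-a}-\Pi_{1,\tilde y}^{p}\tilde y^{-a}\|_{s,[0,1],\ast}\lesssim (p+1)^{-2(-a+\frac12-s-\varepsilon)},
\]
which is the classical sharp rate for the approximation of a monomial-type singularity in a negative Sobolev norm. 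This is proved by duality against $\widetilde H^{-s}$ and approximation in Jacobi-weighted spaces, and exhibits the doubling factor $(p+1)^{-2}$ characteristic of the $p$-version. Assembling the two scalings gives
\[
h^{-a+\frac12-s}(p+1)^{-2(-a+\frac12-s-\varepsilon)} = \left(\frac{h}{(p+1)^2}\right)^{-a+\frac12-s}(p+1)^{2\varepsilon},
\]
and the superfluous factor $(p+1)^{2\varepsilon}$ is absorbed by redefining $\varepsilon$.

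The main obstacle is the sharp $p$-version estimate on the reference element in the norm of $\widetilde H^{s}([0,1])$ with $s<0$: proving the doubling factor and tracking the endpoint degeneration at $s=-a+\frac12$ (whence the necessity of the $-\varepsilon$) requires the Jacobi-weighted duality argument. Fortunately this is exactly the content of the cited results of Heuer and Bespalov--Heuer, so the remaining work is essentially the reduction to their estimate via localization and rescaling.
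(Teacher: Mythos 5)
Your plan does not match what the paper actually does: the paper offers no independent proof of Lemma \ref{keylemmagrad} at all, but simply invokes \cite[Theorem 3.1]{heuer01} for the $p$-version and \cite[Theorem 3.3]{besp} for the $hp$-version, where the estimate is proved globally by duality in Jacobi-weighted spaces. Your attempt to re-derive the $hp$ bound from a reference-element $p$-estimate via localization and rescaling has a genuine gap at its central step. The scaling law $\|\cdot\|_{s,[0,h],\ast}\simeq h^{\frac12-s}\|\cdot\|_{s,[0,1],\ast}$ is false for $s<0$ in the direction you need. Writing $u_h(y)=u(y/h)$ and passing to the Fourier side, $\|u_h\|^2_{s,[0,h],\ast}=h^{1-2s}\int(h^2+\eta^2)^{s}|\hat u(\eta)|^2\,d\eta$, and for $s<0$ the weight $(h^2+\eta^2)^{s}$ exceeds $(1+\eta^2)^{s}$ by a factor of order $h^{2s}$ at low frequencies; e.g. $\|\chi_{[0,h]}\|_{-1,[0,h],\ast}\sim h$, not $h^{3/2}$. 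Since the error of the \emph{Lagrange} interpolant on the reference element has no vanishing moments (unlike a projection error), you cannot discard the low-frequency contribution, and your rescaling only delivers $h^{1/2}$ instead of $h^{\frac12-s}$, losing exactly the factor $h^{-s}$ that the lemma claims.

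A second, related gap concerns the smooth elements. Estimating them by "standard local $hp$ interpolation estimates" means $L^2$-type bounds, and these do not carry the $-s$ gain either: for the relevant case $a=\frac12$, $s=-\frac12$, the elements adjacent to the singular one contribute of order $(p+1)^{-(p+1)}h^{\frac12-a}=(p+1)^{-(p+1)}$, which for fixed $p$ does not decay as $h\to0$, whereas the target is $h^{\frac12-\varepsilon}(p+1)^{-1+2\varepsilon}$. So the smooth part is \emph{not} dominated by the singular element unless one also recovers the factor $h^{-s}$ in the negative norm there, which again requires a duality or orthogonality mechanism that plain Lagrange interpolation does not provide. These two issues are precisely why the cited works of Heuer and Bespalov--Heuer do not argue element by element in the negative norm; the clean way to "prove" the lemma in the present paper's spirit is to quote \cite[Theorem 3.1]{heuer01} and \cite[Theorem 3.3]{besp} directly (after Lemma \ref{lemma3.2}, inequality \eqref{3.21b}, only to pass between the interval and the screen geometry), rather than to reconstruct them by rescaling.
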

For positive powers of $y$ we use \cite[Theorem 3.1]{heuer2} \textcolor{black}{for the $p$-version and in \cite[Theorem 3.2]{besp} for $hp$:}
\begin{lemma}\label{keylemmagrad2}For $\varepsilon > 0$, $0<a$ and $s \in [0, a+\frac{1}{2})$ there holds with the piecewise polynomial \textcolor{black}{Lagrange} interpolant of degree $p+1$, $ \widetilde{\Pi}_{h,y}^{p+1} y^{a}$, of $ y^{a}$ \textcolor{black}{on a quasi-uniform mesh of mesh size $h$}:
$$\|y^{a} - \widetilde{\Pi}_{h,y}^{p+1} y^a\|_{s,[0,1],\ast} \lesssim \left(\frac{h}{p^2}\right)^{\min\{a+\frac{1}{2}-s, 2-s\}{-\varepsilon}} . $$
\end{lemma}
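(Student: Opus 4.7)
The plan is to adapt the strategy of Bespalov--Heuer \cite{heuer2, besp}: localize the error to each mesh element, rescale the element containing the singularity to a reference interval where a sharp $p$-version estimate applies, and balance contributions from the remaining smooth elements. The continuous Lagrange interpolant $\widetilde{\Pi}_{h,y}^{p+1} y^a$ agrees with $y^a$ at every mesh node, so the error $e:=y^a - \widetilde{\Pi}_{h,y}^{p+1} y^a$ vanishes at each node. The one-dimensional analogue of Lemma \ref{lemma3.2} (with $r=0$) then yields
$$\|e\|_{s,[0,1],\ast}^2 \ \lesssim\ \sum_{j=1}^{N} \|e\|_{s, I_j,\ast}^2,$$
where $\{I_j\}_{j=1}^N$ are the mesh elements, and it suffices to estimate the local contribution on each $I_j$.

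On the singular element $I_1 = [0, h_1]$, $h_1 \sim h$, I would rescale to $[0,1]$ by $y = h_1 \hat y$. Using the scaling of the $\widetilde{H}^s$-norm on an interval of length $h_1$ together with $y^a = h_1^a \hat y^a$, this gives
$$\|e\|_{s,I_1,\ast}\ =\ h_1^{a+\tfrac{1}{2}-s}\,\|\hat y^a - \widetilde{\Pi}^{p+1}\hat y^a\|_{s,[0,1],\ast}.$$
The main ingredient is then the reference-interval $p$-version estimate $\|\hat y^a - \widetilde{\Pi}^{p+1}\hat y^a\|_{s,[0,1],\ast}\lesssim (p+1)^{-2(a+\frac{1}{2}-s)+\varepsilon}$, which is obtained by expanding $\hat y^a$ in the Jacobi polynomial basis adapted to the endpoint $0$ and exploiting the algebraic decay of the expansion coefficients, as in \cite[Theorem 3.1]{heuer2}. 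This produces a contribution of $(h/p^2)^{a+\frac{1}{2}-s-\varepsilon}$ from $I_1$.

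On the remaining elements $I_j$, $j\ge 2$, the function $y^a$ is $C^\infty$, and the standard scaled $hp$-approximation estimates apply with Sobolev norms controlled by $\|y^a\|_{H^k(I_j)}\lesssim h^{1/2}\, y_{j-1}^{a-k}$. Summing these contributions over the mesh, with an element-wise optimal choice of $k$ (and using that the weights $y_{j-1}^{2(a-k)}$ make the resulting series behave like a convergent geometric sum, effectively sharpening $p^{-k}$ to $p^{-2k}$), produces a bound of the form $(h/p^2)^{\min\{a+\frac{1}{2}-s,\,2-s\}-\varepsilon}$. The cap $2-s$ enters because the $H^1$-conforming piecewise polynomial interpolant cannot effectively exploit regularity beyond $H^2$ on a quasi-uniform mesh. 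Combining the two contributions yields the claimed bound. The main obstacle is the sharp $(p+1)^{-2(a+\frac{1}{2}-s)}$ decay on the reference element, which requires careful Jacobi-coefficient asymptotics together with the correct interpretation of the $\widetilde{H}^s$-norm when $s$ approaches $a+\tfrac{1}{2}$; both ingredients are supplied by \cite{heuer2, besp}.
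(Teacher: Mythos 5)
Your proposal follows essentially the same route as the paper, which does not prove this lemma itself but quotes it from Bespalov--Heuer (\cite[Theorem 3.1]{heuer2} for the $p$-version, \cite[Theorem 3.2]{besp} for $hp$): your localization to mesh elements, rescaling of the singular element with the factor $h_1^{a+\frac{1}{2}-s}$, the Jacobi-coefficient $p$-version estimate on the reference interval (which you, like the paper, take from \cite{heuer2, besp}), and weighted estimates on the elements away from $y=0$ are exactly the mechanism behind those cited results. The minor blemishes — the bound $\|y^a\|_{H^k(I_j)}\lesssim h^{1/2}y_{j-1}^{a-k}$ omits the factorial growth $|a(a-1)\cdots(a-k+1)|$ of the derivatives, the claim that the geometric summation "sharpens $p^{-k}$ to $p^{-2k}$" misdescribes where the doubled rate comes from, and the heuristic for the cap $2-s$ is not the actual source of that restriction — concern only the far-element part, where any fixed sufficiently large $k$ suffices, and since $\min\{a+\frac{1}{2}-s,2-s\}\leq a+\frac{1}{2}-s$ they do not affect the validity of the stated bound.
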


\section{Approximation of singularities}
\label{sectasympt}

Solutions of the wave equation \eqref{eq:oriProblemp} exhibit  singularities at edges and corners of the domain. We here recall a decomposition of the solution near these non-smooth boundary points  into a leading part given by explicit singular functions plus less singular terms.  

Let $0\leq d\leq n-2$ and $K \subset \mathbb{R}^{n-d}$ an open cone with vertex at $0$, which is smooth outside the vertex. Denote the wedge over $K$ by $\mathcal{K} = K \times \mathbb{R}^d$. We study the wave equation in $\mathcal{K}$: 
\begin{subequations} \label{eq:oriProblem2}
\begin{alignat}{2}
\partial_t^2 u(t,x) -\Delta_x u(t,x)&=0 &\quad &\text{in }\mathbb{R}^+_t \times \mathcal{K}_x\ , \\
{u}(0,x)=\partial_t u(0,x)&=0 &\quad &\text{in } \mathcal{K} , 
\end{alignat}
\end{subequations}
with either inhomogeneous Dirichlet boundary conditions \textcolor{black}{$u|_\Gamma = g$} or Neumann boundary conditions \textcolor{black}{$\partial_n u|_\Gamma = g$ on $\Gamma = \partial \mathcal{K}$.}  We aim to describe the asymptotic behavior of a solution in $\mathcal{K}$ near $\{0\}\times \mathbb{R}^d$. Locally, the edge of a screen in $\mathbb{R}^3$ corresponds to $d=1$, a cone point to $d=0$.

After a separation of variables near the edge of $\mathcal{K}$, we consider the operator $\mathfrak{A}_{B}(\nu) = \nu^2+(n-d-2)\nu-\Delta_{S}$ with $B=D$ for Dirichlet and $B=N$ for Neumann boundary conditions in the subset $\Xi=K \cap S^{n-d-1}$ of the sphere. $\Delta_S$ is the  Laplace operator on $S^{n-d-1}$, and its  eigenvalues in $\Xi$ are denoted by $\{\mu_{k,B}\}_{k=0}^\infty$. The eigenvalues of $\mathfrak{A}_B(\nu)$ may then be expressed as $-i\nu_{\pm k,B} = \frac{i(n-d-2)}{2}\mp i \lambda_{k,B}$ with $\lambda_{k,B}= \frac{((n-d-2)^2+4\mu_{k,B})^{1/2}}{2}$. We normalize the associated orthogonal eigenfunctions $\Phi_{k,B}$ of the angular variables $\theta$ as $\|\Phi_{k,B}\|^2_{L^2(\Xi)} = \lambda_{k,B}^{-1}$.

For $d=1$, $n=3$, the nonzero eigenvalues $-i\nu_{\pm k,B}= \mp \frac{k\pi}{\alpha}$ are simple provided $\frac{k\pi}{\alpha} \not \in \mathbb{N}$, where $\alpha$ denotes the opening angle of $K\subset\mathbb{R}^2$. They have multiplicity $2$ otherwise. For $k>0$ one has the explicit formulas $\Phi_{k,N}(\theta) = (k\pi)^{-\frac{1}{2}} \cos(k\pi \theta/\alpha)$, $\Phi_{k,D}(\theta) = (k\pi)^{-\frac{1}{2}} \sin(k\pi \theta/\alpha)$. In the case of Neumann boundary conditions, the eigenvalue $-i\nu_{0,N}=0$ has multiplicity $2$. 

The limit $\alpha$ tends to $2\pi^{-}$ recovers a screen with flat boundary, and for circular edges one may adapt the discussion as in \cite{petersdorff3}. 

For $d=0$, $n=3$, the singular exponents in the corner need to be determined numerically. See \cite{stephanwhite} for a discussion of polyhedral domains.

The singular exponents determine the local asymptotic expansion of the solution to the inhomogeneous wave equation 
\begin{subequations} \label{eq:oriProblemInhom}
\begin{alignat}{2}
\partial_t^2 u(t,x) -\Delta_x u(t,x)&=f &\quad &\text{in }\mathbb{R}^+_t \times \mathcal{K}_x\ , \\
{u}(0,x)=\partial_t u(0,x)&=0 &\quad &\text{in } \mathcal{K} , 
\end{alignat}
\end{subequations}
near the singular points. \textcolor{black}{Here $u|_\Gamma=0$ on $\Gamma = \partial \mathcal{K}$ for Dirichlet, $\partial_n u|_\Gamma = 0$ for Neumann boundary conditions}. For details, see \cite[Theorem 7.4 and Remark 7.5]{kokotov} in the case of the Neumann problem in a wedge, and \cite[Theorem 4.1]{kokotov3} for the Dirichlet problem in a cone. The formulas for the asymptotic expansion   involve special solutions of the Dirichlet or Neumann problem in $K$, as in \cite[(3.5)]{kokotov3}, respectively \cite[(4.4)]{kokotov}: 
$$w_{-k,B}(y, \omega, \zeta) = \frac{2^{1-\lambda_{k,B}}}{\Gamma(\lambda_{k,B})}(i|y|\sqrt{-|\zeta|^2+\omega^2})^{\lambda_{k,B}} K_{\lambda_{k,B}}(i|y|\sqrt{-|\zeta|^2+\omega^2}) |y|^{\nu_{{-k,B}}} \Phi_{k,B}(y/|y|) \ .$$
Here $K_\lambda$ denotes the modified Bessel function of the third kind.

Using $w_{-k,B}$, the leading singularities near an edge or a cone point are given as 
\begin{align}\label{singexp} |y|^{ \nu_{j,B}} \Phi_{j,B}(\theta) (\partial_t^2-\Delta_z)^m(i|y|)^{2m} {\mathcal{F}^{-1}_{(\omega, \zeta) \to (t,z)}{c}_{j,B}} \ ,\end{align}
with $c_{j,B}(\omega, \zeta) = \langle \hat{f}(\cdot, \omega, \zeta), w_{-j,B}(\cdot, \overline{\omega}, \zeta)\rangle_{L^2(K)}$, plus a remainder which is less singular \cite{kokotov, kokotov3}. Here $m\in \mathbb{N}$. Additional logarithmic terms in $|y|$ appear if  $i \nu_{j,B} \in \mathbb{N}$. The regularity of $c_{j,B}(\omega, \zeta) = \langle \hat{f}(\cdot, \omega, \zeta), w_{-j,B}(\cdot, \overline{\omega}, \zeta)\rangle_{L^2(K)}$  is determined by the data $f$ in the wave equation \eqref{eq:oriProblemInhom}. \\

\textcolor{black}{A precise statement of the asymptotic expansion is the content of Theorem \ref{app:theorem2} in the Appendix. the Appendix also provides a discussion of the proof. Using this theorem one sees that by expanding sufficiently many terms, the remainder in the expansion can be made to have  order  $|y|^\eta$, for any $\eta>0$, with a smooth coefficient if the right hand side of the equation is smooth. Knowing the precise Sobolev regularity of the coefficient of this singular function, depending on the Sobolev regularity of the right hand side, would be of interest: It would allow to state precise smoothness assumptions on $g$ in Theorem \ref{approxtheorem1}, resp. Theorem \ref{approxtheorem2}). However, this refined analysis is beyond the scope of the current article.}\\

Further information can be obtained by combining the convolution representation
$${\mathcal{F}^{-1}_{(\omega, \zeta) \to (t,z)}{c}_{j,B}} = \int_{\mathbb{R}^d}\int_{\mathbb{R}} \int_K  f(y,z_1,t_1) W_{-j,B}(y, {t-t_1, z-z_1})\ dy \ dt_1 \ dz_1$$ 
with information about the singular functions $W_{-{j},B}(y,t,z) = \mathcal{F}^{-1}_{(\zeta,\omega) \to (t,z)}w_{-{j},B}$. 
The singular support of $W_{-j,B}$ lies on a light cone emanating from the edge, $\{(y,t,z) \in \mathbb{R}^{n+1} : t = \sqrt{|y|^2+|z|^2}\}$. Therefore {$\mathcal{F}^{-1}_{(\omega, \zeta) \to (t,z)}{c}_{j,B}$} is smooth in $$\{(t,z)\in\mathbb{R}^{d+1} : t>\sup\{t_1 + \sqrt{|y|^2+|z-z_1|^2}  : (y,z_1,t_1) \in \mathrm{singsupp}\ f\}\}\ .$$ For smooth $f$, $\mathrm{singsupp}\ f = \emptyset$ and therefore also {$\mathcal{F}^{-1}_{(\omega, \zeta) \to (t,z)}{c}_{j,B}$} is smooth everywhere.\\

The \textcolor{black}{singular} expansion for the inhomogeneous wave equation in \eqref{eq:oriProblemInhom} \textcolor{black}{in $\mathbb{R}^+_t \times \mathcal{K}_x$} implies an expansion for inhomogeneous  boundary conditions in \eqref{eq:oriProblem2}. The argument is as for elliptic problems \cite[Section 5]{petersdorff2}: For \textcolor{black}{Dirichlet conditions $u|_\Gamma = g$} on ${\mathbb{R}^+_t \times} \partial \mathcal{K}$, we choose an extension $\widetilde{g}$ in ${\mathbb{R}^+_t \times}\mathcal{K}$ with $\widetilde{g}|_\Gamma = g$ on ${\mathbb{R}^+_t \times}\partial \mathcal{K}$. Then $U = u-\widetilde{g}$ satisfies the inhomogeneous wave equation $\partial_t^2 U - \Delta_x U = {f} -\partial_t^2 \widetilde{g} + \Delta \widetilde{g}$ with homogeneous boundary condition $\textcolor{black}{U|_\Gamma}=0$. The above discussion describes the asymptotic expansion of $U$, and one concludes a corresponding expansion for $u = U + \widetilde{g}$. \textcolor{black}{An analogous argument applies to Neumann boundary conditions $\partial_n u|_\Gamma = g$.} 

The resulting asymptotic expansions of the boundary values $u|_\Gamma$ and $\partial_n u|_\Gamma$ will be crucial for the analysis of the solutions to the boundary integral formulations, \textcolor{black}{and for the ease of the reader we give more details in the Appendix.}

In the case of a wedge, regularity results have also been obtained by  Eskin \cite{eskin} using Wiener-Hopf symbol factorizations. 

\subsection{Singularities for circular screens and approximation}

We first illustrate the above expansion for the exterior of a circular wedge with exterior opening angle $\alpha$. For $\alpha \to 2\pi^-$, the wedge degenerates into the circular screen $\{(x_1,x_2,0) \in \mathbb{R}^3 :  {x_1^2+x_2^2}\leq 1\}$. Near the edge $\{(x_1,x_2,0) \in \mathbb{R}^3 :  {x_1^2+x_2^2}= 1\}$ we use the coordinates $(y,z,\theta)$, where in polar coordinates in the $x_1-x_2$-plane $y=r-1$, $z=\theta$. Using \cite{petersdorff3}, an analogous expansion to  \eqref{eq:oriProblem2} also holds in this curved geometry, with the same leading singular term $|y|^{\nu}$, where $\nu\to \frac{1}{2}$ as $\alpha \to 2\pi^-$:
\begin{align}
u(y,t,z)|_\Gamma &= a(t,z) |y|^{\frac{1}{2}} + v_0(y,z,t)\  , \label{decompostionEdget}\\
\partial_n u(y,t,z)|_\Gamma &= b(t,z) |y|^{-\frac{1}{2}} + {\psi_0}(y, z,t)\ . \label{decompositionEdge}
 \end{align}
Here $a$ and $b$ are smooth for smooth data.
 
From these decompositions we obtain \textcolor{black}{quasi-}optimal approximation properties for the $hp$-version, \textcolor{black}{up to an arbitrarily small $\varepsilon>0$. }\\

\begin{theorem}\label{approxtheorem1} {Let $\varepsilon>0$.}
a) Let $u$ be a  solution to the homogeneous wave equation with inhomogeneous Neumann boundary conditions $\partial_n u|_\Gamma = g$, \textcolor{black}{with $g \in {H}^{\alpha}_\sigma(\R^+, \widetilde{H}^{\beta}(\Gamma))$ for some $\alpha,\beta$, so that the regular part $v_0$ \textcolor{black}{belongs to} ${H}^{\mu}_\sigma(\R^+, \widetilde{H}^{\eta}(\Gamma))$  in the singular expansion of $u|_\Gamma$, with $\eta, \mu$ sufficiently large}. Further, let $\phi_{h,\Delta t}$ be the best approximation {in the norm of ${H}^{r}_\sigma(\R^+, \widetilde{H}^{\frac{1}{2}-s}(\Gamma))$ to the Dirichlet trace $u|_\Gamma$ in $\widetilde{V}^{p,p}_{\Delta t,h}$} on a quasi-uniform spatial mesh with $\Delta t \lesssim h$. Then   $$\|u-\phi_{h, \Delta t}\|_{r,\frac{1}{2}-s, \Gamma, \ast} \lesssim \left(\frac{h}{p^2}\right)^{\frac{1}{2}+s{-\varepsilon}} + \left(\frac{h}{p}\right)^{-\frac{1}{2}+s+\eta}+\left(\frac{\Delta t}{p}\right)^{\mu+s-r-\frac{1}{2}}\ ,$$
 where {$r \in [0,p)$}.

b) Let $u$ be a  solution to the homogeneous wave equation with inhomogeneous Dirichlet boundary conditions $u|_\Gamma = g$, \textcolor{black}{with $g \in {H}^{\alpha}_\sigma(\R^+, \widetilde{H}^{\beta}(\Gamma))$ for some $\alpha,\beta$, so that the regular part $\psi_0$ \textcolor{black}{belongs to} ${H}^{\mu}_\sigma(\R^+, \widetilde{H}^{\eta}(\Gamma))$  in the singular expansion of $\partial_n u|_\Gamma$, with $\eta, \mu$ sufficiently large}. Further, let $\psi_{h,\Delta t}$ be the best approximation  {in the norm of ${H}^{r}_\sigma(\R^+, \widetilde{H}^{-\frac{1}{2}}(\Gamma))$ to the Neumann trace $\partial_n u|_\Gamma$ in ${V}^{p,p}_{\Delta t,h}$} on a quasi-uniform spatial mesh  with $\Delta t \lesssim h$. Then  $$\|\partial_n u - \psi_{h,\Delta t}\|_{r, -\frac{1}{2}, \Gamma, \ast} \lesssim \left(\frac{h}{(p+1)^2}\right)^{\frac{1}{2}{-\varepsilon}} + \left(\frac{h}{p+1}\right)^{\frac{1}{2}+\eta}+ \left(\frac{\Delta t}{p+1}\right)^{\mu+1-r}\ ,$$
 where {$r \in [0,p+1)$}.
\end{theorem}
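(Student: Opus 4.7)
The plan is to exploit the singular decompositions \eqref{decompostionEdget} and \eqref{decompositionEdge} together with the fact that the best approximation error is dominated by the error of any specific approximation. I will construct such an approximation as a tensor product in time and space, where in a neighborhood of the edge the space part is itself a tensor product in the edge-normal direction $y$ and the along-edge direction $z$, following the reduction from triangular to rectangular meshes in \cite{disspetersdorff}. A smooth cutoff $\chi$ supported in an edge neighborhood is used to split $u|_\Gamma = \chi\, u|_\Gamma + (1-\chi)\,u|_\Gamma$; the second piece is smooth and handled directly by Lemma~\ref{approxprop}, while the first piece is treated via the decomposition $\chi\,u|_\Gamma = \chi(y)\,a(t,z)\,|y|^{1/2} + \chi(y)\,v_0(y,z,t)$.

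For part~(a), the main work is estimating the singular contribution. I take as a specific approximation $\widetilde{\Pi}_y^{p+1}\,\widetilde{\Pi}_z^p\,\Pi_{\Delta t}^p\bigl(\chi\,a(t,z)\,|y|^{1/2}\bigr) + \widetilde{\Pi}_{h,x}^p\,\Pi_{\Delta t}^p\bigl(\chi\,v_0\bigr)$, where $\widetilde{\Pi}_y^{p+1}$ denotes the piecewise Lagrange interpolant of Lemma~\ref{keylemmagrad2}. Adding and subtracting intermediate projections splits the singular error into
$$
(I) = \bigl(a - \widetilde{\Pi}_z^p\,\Pi_{\Delta t}^p\,a\bigr)\,|y|^{1/2},
\qquad
(II) = \bigl(\widetilde{\Pi}_z^p\,\Pi_{\Delta t}^p\,a\bigr)\bigl(|y|^{1/2} - \widetilde{\Pi}_y^{p+1}\,|y|^{1/2}\bigr).
$$
Term $(II)$ is bounded in $H^r_\sigma(\R^+,\widetilde{H}^{1/2-s}(\Gamma))$ by using the tensor-product norm estimate of Lemma~\ref{lemma3.3a} (or its variant Lemma~\ref{lemma3.3av} depending on the sign of $\tfrac12-s$), the $L^2$-stability of $\widetilde{\Pi}_z^p\,\Pi_{\Delta t}^p$, and Lemma~\ref{keylemmagrad2} applied with $a=\tfrac12$ and Sobolev index $\tfrac12-s$; this delivers the factor $(h/p^2)^{1/2+s-\varepsilon}$. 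Term $(I)$ is controlled by Lemmas~\ref{proj_time} and~\ref{proj_space} applied to $a(t,z)$ in the smooth tangent and time directions (whose regularity is inherited from that of $v_0$), combined with multiplication by the fixed function $|y|^{1/2}$, producing the terms $(h/p)^{-1/2+s+\eta}$ and $(\Delta t/p)^{\mu+s-r-1/2}$. The regular remainder $v_0$ gives the same rates through a direct application of Lemma~\ref{approxprop} with spatial regularity $\eta$ and time regularity $\mu$, absorbed into the stated bound.

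Part~(b) follows the same scheme with $|y|^{-1/2}$ in place of $|y|^{1/2}$, using the projection $\Pi_{h,x}^p$ onto $V_h^p$ (no vanishing trace is required) and Lemma~\ref{keylemmagrad} with $a=\tfrac12$ and Sobolev index $-\tfrac12$; this yields the factor $(h/p^2)^{1/2-\varepsilon}$, while the regular part $\psi_0$ and the tangential/time regularities of $b(t,z)$ produce the remaining terms through Lemmas~\ref{proj_time}, \ref{proj_space}, \ref{approxprop} and the negative-index tensor estimate Lemma~\ref{lemma3.3}. The main technical obstacle in both parts is the matching of Sobolev indices across the tensor factors: the norm $\widetilde{H}^{1/2-s}(\Gamma)$ (resp.\ $\widetilde{H}^{-1/2}(\Gamma)$) must be distributed correctly between the singular $y$-direction and the smooth $(t,z)$-directions, so that the relevant projection estimates apply with the signs required by Lemmas~\ref{lemma3.3}–\ref{lemma3.3av}; the sharpness of the $(h/p^2)$-exponent depends on using the full endpoint of Lemma~\ref{keylemmagrad2} (resp.\ Lemma~\ref{keylemmagrad}), which forces the $\varepsilon$-loss in the bound.
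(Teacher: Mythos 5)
Your proposal follows essentially the same route as the paper: bound the best approximation by the error of an explicit tensor-product projection/interpolant, split via the decompositions \eqref{decompostionEdget}, \eqref{decompositionEdge} and the triangle inequality into time, tangential, edge-singular and regular contributions, and combine Lemmas \ref{lemma3.3}--\ref{lemma3.3av} with Lemmas \ref{proj_time}, \ref{proj_space}, \ref{approxprop}, \ref{keylemmagrad} and \ref{keylemmagrad2}, which reproduces exactly the stated exponents. The only caveat is that your degree-$(p+1)$ interpolant $\widetilde{\Pi}_y^{p+1}$ in the $y$-direction leaves the trial space $\widetilde{V}^{p,p}_{\Delta t,h}$; using the degree-$p$ interpolant instead (as the paper implicitly does when invoking Lemma \ref{keylemmagrad2}) gives the same rate $\left(\frac{h}{p^2}\right)^{\frac{1}{2}+s-\varepsilon}$ up to constants, so this is a notational rather than a substantive issue.
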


Theorem \ref{approxtheorem1} implies a corresponding result for the solutions of the single layer and hypersingular integral equations on the screen:
\begin{corollary}\label{approxcor1} Let $\varepsilon>0$.
a) Let $\phi$ be the solution to the hypersingular integral equation \eqref{hypersingeq} and  $\phi_{h,\Delta t}$ the best approximation {in the norm of ${H}^{r}_\sigma(\R^+, \widetilde{H}^{\frac{1}{2}-s}(\Gamma))$ to $\phi$ in $\widetilde{V}^{p,p}_{\Delta t,h}$} on a quasi-uniform spatial mesh  with $\Delta t \lesssim h$. \textcolor{black}{Assume that the right hand side $g \in {H}^{\alpha}_\sigma(\R^+, \widetilde{H}^{\beta}(\Gamma))$ for some $\alpha,\beta$, so that the regular part $v_0$ \textcolor{black}{belongs to} ${H}^{\mu}_\sigma(\R^+, \widetilde{H}^{\eta}(\Gamma))$  in the singular expansion of $u|_\Gamma$, with $\eta, \mu$ sufficiently large}. Then  $$\|\phi-\phi_{h, \Delta t}\|_{r,\frac{1}{2}-s, \Gamma, \ast} \lesssim \left(\frac{h}{p^2}\right)^{\frac{1}{2}+s{-\varepsilon}} + \left(\frac{h}{p}\right)^{-\frac{1}{2}+s+\eta}+\left(\frac{\Delta t}{p}\right)^{\mu+s-r-\frac{1}{2}}\ ,$$
 where $r \in [0,p)$, $s\in [0,\frac{1}{2}]$.

b) Let $\psi$ be the solution to the single layer integral equation \eqref{dirproblemV} and  $\psi_{h,\Delta t}$ the best approximation  {in the norm of ${H}^{r}_\sigma(\R^+, \widetilde{H}^{-\frac{1}{2}}(\Gamma))$ to $\psi$ in ${V}^{p,p}_{\Delta t,h}$} on a quasi-uniform spatial mesh  with $\Delta t \lesssim h$. \textcolor{black}{Assume that the right hand side $f \in {H}^{\alpha}_\sigma(\R^+, \widetilde{H}^{\beta}(\Gamma))$ for some $\alpha,\beta$, so that the regular part $\psi_0$ \textcolor{black}{belongs to} ${H}^{\mu}_\sigma(\R^+, \widetilde{H}^{\eta}(\Gamma))$  in the singular expansion of $\partial_n u|_\Gamma$, with $\eta, \mu$ sufficiently large}. Then $$\|\psi - \psi_{h,\Delta t}\|_{r, -\frac{1}{2}, \Gamma, \ast} \lesssim \left(\frac{h}{(p+1)^2}\right)^{\frac{1}{2}{-\varepsilon}} + \left(\frac{h}{p+1}\right)^{\frac{1}{2}+\eta}+ \left(\frac{\Delta t}{p+1}\right)^{\mu+1-r}\ ,$$
 where {$r \in [0,p+1)$}.
\end{corollary}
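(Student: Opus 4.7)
The plan is to reduce Corollary \ref{approxcor1} directly to Theorem \ref{approxtheorem1} via the representation of the integral equation solutions as jumps of wave-equation solutions across the flat screen. The representation is explicitly stated in the introduction and is the time-dependent analogue of the elliptic result in \cite{epsscreen}; on a flat $\Gamma$ it is especially clean because Remark \ref{vanish} forces $K=K'=0$, so the only surviving jump relations are those for the single layer and double layer potentials.

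First, I would record the identification: for the hypersingular equation \eqref{hypersingeq} with datum $g$, the solution $\phi$ coincides with the jump $\phi=[u]|_\Gamma$ across $\Gamma$ of the solution $u$ to \eqref{eq:oriProblemp} with Neumann data $\partial_n u|_\Gamma=g$; for the single layer equation \eqref{dirproblemV} with datum $f$, the solution $\psi$ coincides with $\psi=[\partial_n u]|_\Gamma$ where $u$ solves \eqref{eq:oriProblemp} with Dirichlet data $u|_\Gamma=f$. By reflection symmetry across the plane containing $\Gamma$, each jump differs only by a factor from the corresponding one-sided boundary value used in Theorem \ref{approxtheorem1}, so $\phi$ inherits the edge expansion \eqref{decompostionEdget} and $\psi$ inherits \eqref{decompositionEdge}, with the same singular exponents $\tfrac{1}{2}$ and $-\tfrac{1}{2}$ and with regular parts of the same Sobolev regularity $H^\mu_\sigma(\mathbb{R}^+,\widetilde{H}^\eta(\Gamma))$ under the stated smoothness assumptions on $g$ and $f$.

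Second, I would transfer the approximation estimates: since Theorem \ref{approxtheorem1}(a) constructs some element of $\widetilde{V}^{p,p}_{\Delta t,h}$ whose distance from $u|_\Gamma$ in $H^r_\sigma(\mathbb{R}^+,\widetilde{H}^{\frac{1}{2}-s}(\Gamma))$ is bounded by the claimed right hand side, the best approximation $\phi_{h,\Delta t}$ to $\phi$ in the same space can do no worse; the analogous argument applies in $H^r_\sigma(\mathbb{R}^+,\widetilde{H}^{-\frac{1}{2}}(\Gamma))$ for $\psi$ via Theorem \ref{approxtheorem1}(b). This yields the two estimates in the Corollary verbatim.

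The only delicate point is verifying that the jump relations and the resulting inheritance of the singular decompositions \eqref{decompostionEdget}--\eqref{decompositionEdge} genuinely carry over from the elliptic to the hyperbolic setting. This is handled by noting that the singular expansions in Section \ref{sectasympt} are derived from the Plamenevskii-type decompositions (\cite{kokotov,kokotov3}, and Theorem \ref{app:theorem2} in the Appendix), which apply to the wave equation on either side of $\Gamma$; the jump across $\Gamma$ then admits the same leading singular profile near the edge as a one-sided trace, with a regular remainder whose Sobolev regularity is controlled by that of $g$ respectively $f$. With this in place, invoking Theorem \ref{approxtheorem1} completes the proof.
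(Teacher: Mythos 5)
Your proposal is correct and follows essentially the same route as the paper: the paper proves the corollary precisely by invoking the flat-screen identifications $\phi=[u]|_\Gamma$ (Neumann data $g$) and $\psi=[\partial_n u]|_\Gamma$ (Dirichlet data $f$), carried over verbatim from the elliptic case \cite{epsscreen}, and then applying Theorem \ref{approxtheorem1}. Your additional remarks on the reflection symmetry and the inheritance of the expansions \eqref{decompostionEdget}--\eqref{decompositionEdge} by the jumps only make explicit what the paper leaves implicit.
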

Indeed, on the flat screen the solutions to the integral equations are given by $\phi = \left[u\right]|_\Gamma$ {in terms of the solution $u$ which satisfies Neumann conditions $Bu = \partial_n u|_\Gamma = g$, respectively} $\psi = \left[\partial_n u\right]|_\Gamma$ {in terms of the solution $u$ which satisfies Dirichlet conditions $Bu = u|_\Gamma=f$}.\\

\textcolor{black}{The  proof of Theorem \ref{approxtheorem1}  is the content of}  the following two subsections. 

\subsubsection{Approximation of the Neumann trace}\label{secneum}

\begin{theorem} 
Under the assumptions of Theorem \ref{approxtheorem1}, for $\Delta t \lesssim h$ there holds \textcolor{black}{for $r \in [0,p+1)$}
$$\|\partial_n u - \Pi_{h,x}^p \Pi_{\Delta t}^p \partial_n u\|_{r, -\frac{1}{2}, \Gamma, \ast} \lesssim \left(\frac{h}{(p+1)^2}\right)^{\frac{1}{2}{-\varepsilon}} + \left(\frac{h}{p+1}\right)^{\frac{1}{2}+\eta}+  \left(\frac{\Delta t}{p+1}\right)^{\mu+1-r}\ .$$
\end{theorem}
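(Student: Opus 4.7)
The plan is to exploit the edge decomposition \eqref{decompositionEdge}, $\partial_n u|_\Gamma = b(t,z)|y|^{-\frac{1}{2}} + \psi_0(y,z,t)$, where $b$ is smooth in $(t,z)$ and $\psi_0 \in H^\mu_\sigma(\mathbb{R}^+,\widetilde{H}^\eta(\Gamma))$. By linearity of $\Pi_{h,x}^p\Pi_{\Delta t}^p$ and the triangle inequality, the error splits into a regular and a singular contribution, which I would estimate separately.

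For the regular part $\psi_0$ I would invoke Lemma \ref{approxprop} with $m=\eta$, $s=\mu$, spatial target index $l=-\tfrac{1}{2}$, and temporal index $r$. The remark after Lemma \ref{approxprop} ensures that the first inequality applies to $\Pi_{h,x}^p$ (not only to $\widetilde{\Pi}_{h,x}^p$). With $\alpha=\eta+\tfrac{1}{2}$ and $\beta\geq \mu+1-r$ the bound becomes $(h/(p+1))^{\eta+1/2}+(\Delta t/(p+1))^{\mu+1-r}$, which accounts for the second and third terms in the claim after using $\Delta t\lesssim h$.

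For the singular part $b(t,z)|y|^{-\frac{1}{2}}$, I would first localize to a tubular neighborhood of the edge via a partition of unity, which by Lemma \ref{lemma3.2} reduces the analysis to a product domain $I_y\times I_z$ in local coordinates $y\in[0,1]$ normal to the edge. I would then construct an explicit comparison element
\begin{equation*}
\chi_{h,\Delta t} \;:=\; \bigl(\Pi_{\Delta t}^p\Pi_{h,z}^p b\bigr)(t,z)\cdot \bigl(\Pi_{h,y}^p |y|^{-\frac{1}{2}}\bigr) \;\in\; V^{p,p}_{\Delta t,h},
\end{equation*}
with $\Pi_{h,y}^p$ the piecewise Lagrange interpolant in $y$. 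Using $\widetilde{H}^{-1/2}$-stability of the $L^2$-orthogonal projection (via duality against $H^{1/2}$ and quasi-uniformity of the mesh), the projection error is dominated by $\|b|y|^{-\frac{1}{2}}-\chi_{h,\Delta t}\|_{r,-1/2,\Gamma,\ast}$. Splitting this as $(b-\Pi_{\Delta t}^p\Pi_{h,z}^p b)\,|y|^{-\frac{1}{2}}$ plus $(\Pi_{\Delta t}^p\Pi_{h,z}^p b)(|y|^{-\frac{1}{2}}-\Pi_{h,y}^p|y|^{-\frac{1}{2}})$, the first summand is of arbitrarily high order in $h,\Delta t$ by smoothness of $b$ (hence absorbed). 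For the second summand I apply Lemma \ref{lemma3.3} with $s_1=\tfrac{1}{2}$, $s_2=0$, so that the $(t,z)$-factor is controlled uniformly by $\|b\|$ and the $y$-factor is handled by Lemma \ref{keylemmagrad} with $a=\tfrac{1}{2}$, $s=-\tfrac{1}{2}$, yielding the rate $(h/(p+1)^2)^{1/2-\varepsilon}$.

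The main obstacle will be the rigorous $\widetilde{H}^{-1/2}$-stability of $\Pi_{h,x}^p\Pi_{\Delta t}^p$ in the anisotropic space-time setting, together with matching the locally constructed interpolant of $|y|^{-\frac{1}{2}}$ with the globally defined orthogonal projection across the partition of unity; once these technicalities are dispatched via the duality argument and Lemma \ref{lemma3.2}, the three contributions sum to the asserted bound.
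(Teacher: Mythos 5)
Your proposal follows essentially the same route as the paper's proof in Section \ref{secneum}: split $\partial_n u$ via \eqref{decompositionEdge} into $b(t,z)|y|^{-\frac{1}{2}}$ plus $\psi_0$, treat $\psi_0$ with Lemma \ref{approxprop}, and compare the singular part with the tensor-product discrete function built from $\Pi_{\Delta t}^{p}$, $\Pi_{h,z}^{p}b$ and the one-dimensional approximant of $|y|^{-\frac{1}{2}}$, factoring the anisotropic norm with Lemma \ref{lemma3.3} and using Lemma \ref{keylemmagrad} for the singular factor.

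Two points need fixing or can be simplified. First, the indices in your application of Lemma \ref{lemma3.3} must be interchanged: with $s_1=\tfrac{1}{2}$, $s_2=0$ the $y$-factor would be measured in $L^2(I)$, and $|y|^{-\frac{1}{2}}\notin L^2(I)$, so that step fails as written; the correct pairing is $s_1=0$, $s_2=\tfrac{1}{2}$, i.e.\ the $(t,z)$-factor in the $\|\cdot\|_{r,0}$ norm and the $y$-factor in $\widetilde{H}^{-\frac{1}{2}}(I)$, which is exactly what the paper does and is the only choice consistent with your own use of Lemma \ref{keylemmagrad} at $s=-\tfrac{1}{2}$. Relatedly, the summand $(b-\Pi_{\Delta t}^{p}\Pi_{h,z}^{p}b)\,|y|^{-\frac{1}{2}}$ is not of arbitrarily high order (the projections have fixed degree $p$); it contributes terms of the type $(\Delta t/(p+1))^{\mu+1-r}$ and $(h/(p+1))^{k+\frac{1}{2}-\varepsilon}$, which the paper tracks explicitly and which are absorbed in the stated bound. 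Second, the $\widetilde{H}^{-\frac{1}{2}}$-stability detour you identify as the main obstacle is avoidable: the paper never compares with a separately constructed quasi-interpolant but inserts the intermediate functions $\Pi_{\Delta t}^{p}b\,|y|^{-\frac{1}{2}}$, $\Pi_{\Delta t}^{p}\Pi_{h,z}^{p}b\,|y|^{-\frac{1}{2}}$ and $\Pi_{\Delta t}^{p}\Pi_{h,z}^{p}b\,\Pi_{h,y}^{p}|y|^{-\frac{1}{2}}$ by the triangle inequality, exploiting the tensor-product structure on rectangular elements ($\Pi_{h,x}^{p}=\Pi_{h,z}^{p}\Pi_{h,y}^{p}$ acting on a function of product form); moreover, the estimate is ultimately used for the best-approximation statement of Theorem \ref{approxtheorem1}, for which exhibiting one discrete function realizing the rate suffices, so no stability of $\Pi_{h,x}^{p}\Pi_{\Delta t}^{p}$ in negative-order space-time norms is actually required.
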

\begin{proof}{Using the decomposition \eqref{decompositionEdge} for $ \partial_n u$, we can separate the singular and regular parts on the rectangular mesh:
\begin{align*}&\|\partial_n u - \Pi_{h,x}^{p} \Pi_{\Delta t}^{{p}} \partial_n u\|_{r, -\frac{1}{2}, \Gamma, \ast} \leq \|b(t,z) |y|^{-\frac{1}{2}} - \Pi_{\Delta t}^{{p}} \Pi_{h,x}^{p} b(t,z) |y|^{-\frac{1}{2}}\|_{r, -\frac{1}{2}, \Gamma,\ast}  + \|\psi_0 - \Pi_{\Delta t}^{{p}} \Pi_{h,x}^{p}{\psi_0}\|_{r, -\frac{1}{2}, \Gamma,\ast}\\ & \leq \|b(t,z) |y|^{-\frac{1}{2}} -\Pi_{\Delta t}^{{p}} b(t,z) |y|^{-\frac{1}{2}}\|_{r, -\frac{1}{2}, \Gamma,\ast}+\|\Pi_{\Delta t}^{{p}} b(t,z) |y|^{-\frac{1}{2}}- \Pi_{\Delta t}^{{p}} \Pi_{h,x}^{p} b(t,z) |y|^{-\frac{1}{2}}\|_{r, -\frac{1}{2}, \Gamma,\ast} \\ & \qquad + \|\psi_0 - \Pi_{\Delta t}^{{p}} \Pi_{h,x}^{p}{\psi_0}\|_{r, -\frac{1}{2}, \Gamma,\ast}\\ &
\leq \|b(t,z) -\Pi_{\Delta t}^{{p}} b(t,z)\|_{r, \epsilon-\frac{1}{2}} \||y|^{-\frac{1}{2}}\|_{-\varepsilon, I, \ast}  +  \|\Pi_{\Delta t}^{{p}} b(t,z) |y|^{-\frac{1}{2}}- \Pi_{\Delta t}^{{p}} \Pi_{h,z}^{p} b(t,z) |y|^{-\frac{1}{2}}\|_{r, -\frac{1}{2}, \Gamma,\ast} \\ & \qquad + \| \Pi_{\Delta t}^{{p}} \Pi_{h,z}^{p} b(t,z) |y|^{-\frac{1}{2}}-\Pi_{\Delta t}^{{p}} \Pi_{h,z}^{p} b(t,z) \Pi_{h,y}^{p} |y|^{-\frac{1}{2}}\|_{r, -\frac{1}{2}, \Gamma,\ast}+
\|\psi_0 - \Pi_{\Delta t}^{{p}} \Pi_{h,x}^{p}{\psi_0}\|_{r, -\frac{1}{2}, \Gamma,\ast}\ .
\end{align*}
Here, for the first term we have used Lemma \ref{lemma3.3}, and for the second $\Pi_{h,x}^{p} =  \Pi_{h,z}^{p} \Pi_{h,y}^{p}$. The norm $\|\cdot\|_{r,\epsilon-\frac{1}{2},I}$ is the anisotropic space-time Sobolev norm in the $t$ and $z$ coordinates. We note that the first term is bounded by $$\|b(t,z) -\Pi_{\Delta t}^{{p}} b(t,z)\|_{r, \epsilon-\frac{1}{2}} \lesssim \left(\frac{ \Delta t}{p+1}\right)^{\mu+1-r}   \|b(t,z)\|_{\mu+1, \epsilon-\frac{1}{2}}\ .$$
For the second and third terms we obtain with Lemma \ref{lemma3.3}:
\begin{align*}&
 \|\Pi_{\Delta t}^{{p}} b(t,z) |y|^{-\frac{1}{2}}- \Pi_{\Delta t}^{{p}} \Pi_{h,z}^{p} b(t,z) |y|^{-\frac{1}{2}}\|_{r, -\frac{1}{2}, \Gamma,\ast} + \| \Pi_{\Delta t}^{{p}} \Pi_{h,z}^{p} b(t,z) |y|^{-\frac{1}{2}}-\Pi_{\Delta t}^{{p}} \Pi_{h,z}^{p} b(t,z) \Pi_{h,y}^{p} |y|^{-\frac{1}{2}}\|_{r, -\frac{1}{2}, \Gamma,\ast}\\ & \lesssim \|\Pi_{\Delta t}^{{p}} b(t,z) - \Pi_{\Delta t}^{{p}} \Pi_{h,z}^{p} b(t,z)\|_{r, \varepsilon - \frac{1}{2} } \||y|^{-\frac{1}{2}}\|_{-\varepsilon, I, \ast}  +\| \Pi_{\Delta t}^{{p}} \Pi_{h,z}^{p} b(t,z)\|_{r,0} \||y|^{-\frac{1}{2}} - \Pi_{h,y}^{p}|y|^{-\frac{1}{2}}\|_{-\frac{1}{2}, I, \ast } \ .
\end{align*}}
{From Lemma \ref{keylemmagrad} we have $\|y^{-a} - \Pi_{y}^{p} y^{-a}\|_{-\frac{1}{2}, I, \ast} \lesssim  \left(\frac{h}{(p+1)^2}\right)^{-a+1-\varepsilon}$ and $$\|\Pi_{\Delta t}^{{p}} b(t,z) - \Pi_{\Delta t}^{{p}} \Pi_{h,z}^{p} b(t,z)\|_{r, \varepsilon - \frac{1}{2} } \lesssim \left(\frac{h}{p+1}\right)^{\frac{1}{2}+k- \epsilon}\|b(t,z)\|_{r,k} \ . $$ }

After possibly expanding finitely many terms, which may be treated as above, we assume that the regular part $\psi_0$ in \eqref{decompositionEdge} is $H^\eta$ in space. Then
using the approximation properties for $\psi_0$, 
\begin{align*}\|\psi_0 - \Pi_{\Delta t}^{{p}} \Pi_{h,x}^{p}{\psi_0}\|_{r, -\frac{1}{2}, \Gamma,\ast} &\lesssim_\sigma   \Big(\left(\frac{\Delta t}{p+1}\right)^{\mu+1-r}+ \left(\frac{h}{p+1}\right)^{\frac{1}{2}+\eta}\Big)\|\psi_0\|_{\mu+1, \eta, \Gamma}\ .
\end{align*}
Combining the estimates for the different terms, we conclude that for $\Delta t \lesssim h$ and sufficiently large $k$\\
$$\|\partial_n u - \Pi_{h,x} \Pi_{\Delta t} \partial_n u\|_{r, -\frac{1}{2}, \Gamma, \ast} \lesssim \left(\frac{h}{(p+1)^2}\right)^{\frac{1}{2}{-\varepsilon}} + \left(\frac{h}{p+1}\right)^{\frac{1}{2}+\eta}+  \left(\frac{\Delta t}{p+1}\right)^{\mu+1-r}\ .$$
\end{proof}

\subsubsection{Approximation of the Dirichlet trace}

We now consider the approximation of the solution $u$ to the wave equation on the screen, with expansion \eqref{decompostionEdget}, or equivalently the solution to the hypersingular integral equation. Apart from the energy norm, here the $L^2$-norm is of interest, and we state the result for general Sobolev indices:  
\begin{theorem}
\textcolor{black}{Under the assumptions of Theorem \ref{approxtheorem1},} for $\Delta t \lesssim h$, $r \in [0,p)$ and $s\in [0, \frac{1}{2}]$ there holds
$$\|u - \widetilde{\Pi}_{h,x}^{{p}} \Pi_{\Delta t}^{{p}} u\|_{r,\frac{1}{2}-s, \Gamma, \ast} \lesssim  \left(\frac{h}{p^2}\right)^{\frac{1}{2}+s{-\varepsilon}}+\left(\frac{h}{p}\right)^{s-\frac{1}{2}+\eta} + \left(\frac{\Delta t}{p}\right)^{\mu+s-r-\frac{1}{2}}\ .$$
\end{theorem}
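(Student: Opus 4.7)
The plan is to adapt the argument of the previous subsection for the Neumann trace, replacing the singular factor $|y|^{-1/2}$ by $|y|^{1/2}$ and swapping the negative-index tensor-product estimates for their positive-index counterparts. Using the edge expansion \eqref{decompostionEdget}, I first split
\begin{equation*}
u - \widetilde{\Pi}_{h,x}^p \Pi_{\Delta t}^p u = \bigl(a(t,z)|y|^{1/2} - \widetilde{\Pi}_{h,x}^p \Pi_{\Delta t}^p [a(t,z)|y|^{1/2}]\bigr) + \bigl(v_0 - \widetilde{\Pi}_{h,x}^p \Pi_{\Delta t}^p v_0\bigr),
\end{equation*}
and bound the two summands in $H^r_\sigma(\mathbb{R}^+, \widetilde{H}^{1/2-s}(\Gamma))$ separately.

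For the regular remainder $v_0 \in H^\mu_\sigma(\mathbb{R}^+, \widetilde{H}^\eta(\Gamma))$, obtained after expanding sufficiently many further terms of the asymptotic series, Lemma \ref{approxprop} applies directly with Sobolev indices $l = 1/2 - s \geq 0$ and $r > 0$ in the positive-index regime. This produces the spatial contribution $(h/p)^{\eta + s - 1/2}$ together with a time contribution of the form $(\Delta t/p)^{\mu + s - r - 1/2}$, matching the last two terms of the claimed bound.

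For the singular contribution I exploit the tensor-product structure of the projections, factoring $\widetilde{\Pi}_{h,x}^p = \widetilde{\Pi}_{h,z}^p \widetilde{\Pi}_{h,y}^{p+1}$ into one-dimensional projections along the edge variable $z$ and the transverse variable $y$, and inserting intermediate projections:
\begin{align*}
a(t,z)|y|^{1/2} - \widetilde{\Pi}_{h,x}^p \Pi_{\Delta t}^p [a|y|^{1/2}] &= [a - \Pi_{\Delta t}^p a](t,z)\,|y|^{1/2} + (\Pi_{\Delta t}^p a)(t,z)\bigl[|y|^{1/2} - \widetilde{\Pi}_{h,y}^{p+1}|y|^{1/2}\bigr] \\
&\quad + [\Pi_{\Delta t}^p a - \widetilde{\Pi}_{h,z}^p \Pi_{\Delta t}^p a](t,z)\,\widetilde{\Pi}_{h,y}^{p+1}|y|^{1/2}.
\end{align*}
The positive-index tensor-product estimates of Lemma \ref{lemma3.3a} and its variant Lemma \ref{lemma3.3av} separate the Sobolev norms in $(t,z)$ from that in $y$ in each summand. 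The decisive transverse factor $\||y|^{1/2} - \widetilde{\Pi}_{h,y}^{p+1}|y|^{1/2}\|_{1/2-s, I, \ast}$ is controlled by Lemma \ref{keylemmagrad2} with $a = 1/2$ and Sobolev index $1/2 - s \in [0,1)$, giving $(h/p^2)^{1/2 + s - \varepsilon}$ and producing the first term in the claim. The time projection error is handled by Lemma \ref{proj_time}, and the one-dimensional $hp$-projection error in $z$ is absorbed by the smoothness of $a$ along the edge.

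The main obstacle will be the careful bookkeeping of Sobolev indices: since the target index $1/2 - s \geq 0$ is nonnegative, one is in the positive-index regime of Lemmas \ref{lemma3.3a} and \ref{lemma3.3av}, rather than the $L^2$-type estimates used for the Neumann trace, and one must verify the constraint $1/2 - s < 1$ required by Lemma \ref{keylemmagrad2} (trivially satisfied for $s \in [0, 1/2]$). A further delicate point is the interplay between the temporal regularity of $a(t,z)$ inherited from $g$ through the asymptotics of Section \ref{sectasympt} and the desired exponent $\mu + s - r - 1/2$ in $\Delta t$; expanding sufficiently many singular terms before terminating at the regular remainder $v_0$ guarantees as much spatial and temporal regularity of $a$ as the estimate demands.
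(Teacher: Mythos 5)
Your proposal follows essentially the same route as the paper's proof: split off the edge singularity $a(t,z)|y|^{1/2}$ from the regular remainder $v_0$, telescope with intermediate projections in $t$, $z$ and $y$, separate the norms via the positive-index tensor-product estimates (Lemmas \ref{lemma3.3a}, \ref{lemma3.3av}), control the transverse factor with Lemma \ref{keylemmagrad2} (giving $(h/p^2)^{1/2+s-\varepsilon}$), the time factor with Lemma \ref{proj_time}, and the remainder with Lemma \ref{approxprop}. The only differences are cosmetic (your telescoping attaches the projected factors slightly differently and you write the $y$-interpolant of degree $p+1$ as in Lemma \ref{keylemmagrad2}), so the argument is correct and matches the paper.
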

\begin{proof}
Following the approach in Section \ref{secneum}, we use the triangle inequality
%$$\| u - \Pi_{h,x}^{{p}} \Pi_{\Delta t}^{{p}}  u\|_{r, \frac{1}{2}-s,  \Gamma,\ast} \leq\|u-\Pi_{\Delta t}^{{p}}  u\|_{r, \frac{1}{2}-s,  \Gamma,\ast}+\|\Pi_{\Delta t}^{{p}}  u - \Pi_{h,x}^{{p}} \Pi_{\Delta t}^{{p}}  u\|_{r, \frac{1}{2}-s,  \Gamma,\ast}\ .$$
\begin{align*}
\| u - \widetilde{\Pi}_{h,x}^{{p}} \Pi_{\Delta t}^{{p}}  u\|_{r, \frac{1}{2}-s,  \Gamma,\ast} &\leq\|a(t,z)|y|^{\frac{1}{2}} - \Pi_{\Delta t}^{{p}}a(t,z)|y|^{\frac{1}{2}}\|_{r, \frac{1}{2}-s,  \Gamma,\ast}\\ & \qquad + \|\Pi_{\Delta t}^{{p}}a(t,z)|y|^{\frac{1}{2}} - \widetilde{\Pi}_{h,x}^{{p}} \Pi_{\Delta t}^{{p}}a(t,z)|y|^{\frac{1}{2}}\|_{r, \frac{1}{2}-s,  \Gamma,\ast}+\|v_0 - \widetilde{\Pi}_{h,x}^{{p}} \Pi_{\Delta t}^{{p}}  v_0\|_{r, \frac{1}{2}-s,  \Gamma,\ast}\ .
\end{align*}
%We first estimate $\|\Pi_{\Delta t}^{{p}}  u - \Pi_{h,x}^{{p}} \Pi_{\Delta t}^{{p}}  u\|_{r, \frac{1}{2}-s,  \Gamma,\ast}$: 
We first estimate
\begin{align*}
\|a(t,z)|y|^{\frac{1}{2}} -  \Pi_{\Delta t}^{{p}}a(t,z)|y|^{\frac{1}{2}}\|_{r, \frac{1}{2}-s,  \Gamma,\ast}&\leq \| a(t,z) - \Pi_{\Delta t}^{{p}} a(t,z)\|_{r,  \frac{1}{2}-s, \ast} \||y|^{\frac{1}{2}}\|_{\frac{1}{2}-s,\ast}
%\|\Pi_{\Delta t}^{{p}}  u - \Pi_{h,x}^{{p}} \Pi_{\Delta t}^{{p}}  u\|_{r, \frac{1}{2}-s,  \Gamma,\ast} &\leq \| \Pi_{\Delta t}^{{p}} a(t,z) |y|^{\frac{1}{2}} - \Pi_{\Delta t}^{{p}} \Pi_{h,x}^{{p}} a(t,z) |y|^{\frac{1}{2}}\|_{r, \frac{1}{2}-s, \Gamma,\ast} \\ & \qquad + \|\Pi_{\Delta t}^{{p}} {v_0} - \Pi_{h,x}^{{p}}\Pi_{\Delta t}^{{p}} {v_0}\|_{r, \frac{1}{2}-s,  \Gamma,\ast}\ .
\end{align*}
and note that
$$\|a(t,z) - \Pi_{\Delta t}^{{p}}  a(t,z)\|_{r, \frac{1}{2}-s, \ast} \lesssim \left(\frac{\Delta t}{p}\right)^{\mu+s-r-\frac{1}{2}}\|a(t,z)\|_{\mu,\frac{1}{2}-s}\ .$$
For the second term we note with Lemma \ref{lemma3.3a}, respectively Lemma \ref{lemma3.3}:
\begin{align*}
&\| \Pi_{\Delta t}^{{p}} a(t,z) |y|^{\frac{1}{2}} - \Pi_{\Delta t}^{{p}} \widetilde{\Pi}_{h,x}^{{p}} a(t,z) |y|^{\frac{1}{2}}\|_{r, \frac{1}{2}-s,  \Gamma,\ast}\\
&\leq \| \Pi_{\Delta t}^{{p}} a(t,z) |y|^{\frac{1}{2}}- \Pi_{\Delta t}^{{p}} \widetilde{\Pi}_{h,z}^{{p}} a(t,z) |y|^{\frac{1}{2}} + \Pi_{\Delta t}^{{p}} \widetilde{\Pi}_{h,z}^{{p}} a(t,z) |y|^{\frac{1}{2}} - \Pi_{\Delta t}^{{p}} \widetilde{\Pi}_{h,z}^{{p}} a(t,z) \widetilde{\Pi}_{h,y}^{{p}} |y|^{\frac{1}{2}}\|_{r, \frac{1}{2}-s,  \Gamma,\ast}\\
&\leq \|\Pi_{\Delta t}^{{p}} a(t,z) - \Pi_{\Delta t}^{{p}} \widetilde{\Pi}_{h,z}^{{p}} a(t,z)\|_{r,  \frac{1}{2}-s, \ast} \||y|^{\frac{1}{2}}\|_{ \frac{1}{2}-s,I,\ast  }  +\| \Pi_{\Delta t}^{{p}} \widetilde{\Pi}_{h,z}^{{p}} a(t,z)\|_{r, \frac{1}{2}-s, \ast} \||y|^{\frac{1}{2}} - \widetilde{\Pi}_{h,y}^{{p}}|y|^{\frac{1}{2}}\|_{\frac{1}{2}-s,I, \ast} \ .
\end{align*}
Now note that
$$\|\Pi_{\Delta t}^{{p}} a(t,z) - \Pi_{\Delta t}^{{p}} \widetilde{\Pi}_{h,z}^{{p}} a(t,z)\|_{r, \frac{1}{2}-s, \ast} \lesssim \left(\frac{h}{p}\right)^{k-\frac{1}{2}+s}\|a(t,z)\|_{r,k}$$
and, from Lemma \ref{keylemmagrad2}, 
$$\||y|^{\frac{1}{2}} - \widetilde{\Pi}_{h,y}^{{p}}|y|^{\frac{1}{2}}\|_{\frac{1}{2}-s, I,\ast } \lesssim \left(\frac{h}{p^2}\right)^{\frac{1}{2}+s-\varepsilon}\ .$$
It remains to estimate the remainder $$\|v_0-\widetilde{\Pi}_{h,x}^{{p}}\Pi_{\Delta t}^{{p}}  v_0\|_{r, \frac{1}{2}-s,  \Gamma,\ast} \lesssim \left(\frac{h}{p}\right)^{s-\frac{1}{2}+\eta} + \left(\frac{\Delta t}{p}\right)^{\mu+s-r-\frac{1}{2}} \ ,$$
as in Section \ref{secneum}.
Combining the estimates for the different terms, we conclude the assertion.
\end{proof}

\subsection{Singularities for polygonal screens and approximation}

We consider the singular expansion of the solution to the wave equation \eqref{eq:oriProblemp} with Dirichlet or Neumann boundary conditions on a polygonal screen $\Gamma$. Compared to \eqref{decompostionEdget}, \eqref{decompositionEdge} additional singularities now arise from the corners of the screen. For simplicity, we restrict ourselves to the model case of a flat polygonal screen $\Gamma \subset \R^3$. In this geometry, for elliptic problems asymptotic expansions and their implications for the numerical approximation are discussed in \cite{screenMaischak, petersdorff}.

The following  gives  a decomposition of  the solution  and its normal derivative on $\Gamma$ near the vertex $(0,0)$, in terms of polar coordinates $(r,\theta)$ centered at this point \cite{graded}. Note that we have two boundary values, $u_\pm$, from the upper and lower sides of the screen, and that we use refined information about the edge-vertex singularity.

\begin{align}\label{decompositiont}\nonumber
{u(t,x)|_+} &= C(t)\chi(r)r^{\lambda} \Phi(\theta) +C_1(t)\tilde{\chi}(\theta)\beta_{1}(r)(\sin(\theta))^{\frac{1}{2}}\\& \qquad + C_2(t)\tilde{\chi}(\textstyle{\frac{\pi}{2}}-\theta)\beta_{2}(r)(\cos(\theta))^{\frac{1}{2}} +v_{0}(t,r, \theta) \\ &=: u^v + u^{ev}_1 + u^{ev}_2 + v_0\ ,\nonumber\\
\nonumber
{\partial_n u(t,x)|_+} &= C'(t)\chi(r)r^{\lambda-1} \Phi'(\theta) +C_1'(t)\tilde{\chi}(\theta)\beta_{1}'(r)r^{-1}(\sin(\theta))^{-\frac{1}{2}}\\& \qquad + C_2'(t)\tilde{\chi}(\textstyle{\frac{\pi}{2}}-\theta)\beta_{2}'(r)r^{-1}(\cos(\theta))^{-\frac{1}{2}}+ \psi_{0}(t,r, \theta) \label{decomposition}\\& =: \psi^v + \psi^{ev}_1 + \psi^{ev}_2 + \psi_0\ . \nonumber
\end{align}
Here $\beta_j(r)$ behaves like $r^{\lambda-\frac{1}{2}}$ near $r=0$, while $\beta_j'(r)$ behaves like $r^{\lambda}$, $j=1,2$, . Compared to the local coordinates near the edge in the previous section, the polar angle $\theta$ corresponds to the distance $|y|$ to the edge and the radius $r$ to the variable $z$ along the edge. For $\Gamma=(0,1)\times(0,1)\times\{0\}$, the corner exponent $\lambda \simeq 0.2966$.\\

To control the remainder terms in these formal computations requires elliptic a priori weighted estimates near the singularities, as discussed in \cite{matyu}. \\

\textcolor{black}{In the literature decompositions like \eqref{decompositiont} and \eqref{decomposition} in polar coordinates are also expressed in Cartesian coordinates or a mixture of Cartesian and polar coordinates. The Appendix of \cite{bh08} shows the equivalence of these descriptions, also remarked in Corollary 4 of \cite{petersdorff}. Below we may therefore appeal to results stated for expansions in other coordinate systems.}\\

From the decomposition, similar to Theorem \ref{approxtheorem1} we obtain the approximation properties of the $hp$-method. The error is dominated by the edge singularities, not the corners. 

\begin{theorem}\label{approxtheorem2} {Let $\varepsilon>0$.}
a) Let $u$ be a  solution to the homogeneous wave equation with inhomogeneous Neumann boundary conditions $\partial_n u|_\Gamma = g$, \textcolor{black}{with $g \in {H}^{\alpha}_\sigma(\R^+, \widetilde{H}^{\beta}(\Gamma))$ for some $\alpha,\beta$, so that the regular part $v_0$ \textcolor{black}{belongs to} ${H}^{\mu}_\sigma(\R^+, \widetilde{H}^{\eta}(\Gamma))$  in the singular expansion of $u|_\Gamma$, with $\eta, \mu$ sufficiently large}. Further, let $\phi_{h,\Delta t}$ be the best approximation {in the norm of ${H}^{r}_\sigma(\R^+, \widetilde{H}^{\frac{1}{2}-s}(\Gamma))$ to the Dirichlet trace $u|_\Gamma$ in $\widetilde{V}^{p,p}_{\Delta t,h}$} on a quasi-uniform spatial mesh with $\Delta t \lesssim h$. Then $$\|u-\phi_{h, \Delta t}\|_{r,\frac{1}{2}-s, \Gamma, \ast} \lesssim \left(\frac{h}{p^2}\right)^{\frac{1}{2}+ \min\{\lambda,0\}+s{-\varepsilon}} + \left(\frac{h}{p}\right)^{-\frac{1}{2}+s+\eta}+  \left(\frac{\Delta t}{p}\right)^{\mu+s-r-\frac{1}{2}}\ ,$$
 where {$r \in [0,p)$}.

b) Let $u$ be a  solution to the homogeneous wave equation with inhomogeneous Dirichlet boundary conditions $u|_\Gamma = g$,  \textcolor{black}{with $g \in {H}^{\alpha}_\sigma(\R^+, \widetilde{H}^{\beta}(\Gamma))$ for some $\alpha,\beta$, so that the regular part $\psi_0$ \textcolor{black}{belongs to} ${H}^{\mu}_\sigma(\R^+, \widetilde{H}^{\eta}(\Gamma))$  in the singular expansion of $\partial_n u|_\Gamma$, with $\eta, \mu$ sufficiently large}. Further, let $\psi_{h,\Delta t}$ be the best approximation  {in the norm of ${H}^{r}_\sigma(\R^+, \widetilde{H}^{-\frac{1}{2}}(\Gamma))$ to the Neumann trace $\partial_n u|_\Gamma$ in ${V}^{p,p}_{\Delta t,h}$} on a quasi-uniform spatial mesh  with $\Delta t \lesssim h$. Then $$\|\partial_n u - \psi_{h,\Delta t}\|_{r, -\frac{1}{2}, \Gamma, \ast} \lesssim \left(\frac{h}{(p+1)^2}\right)^{\frac{1}{2}+\min\{\lambda,0\}{-\varepsilon}} + \left(\frac{h}{p+1}\right)^{\frac{1}{2}+\eta}+  \left(\frac{\Delta t}{p+1}\right)^{\mu+1-r}\ ,$$
 where {$r \in [0,p+1)$}.
\end{theorem}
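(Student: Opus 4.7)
The proof follows the pattern of Theorem \ref{approxtheorem1}, now with extra terms from the polygonal singular decompositions \eqref{decompositiont} (for part a) and \eqref{decomposition} (for part b). We bound the best-approximation error by the error of the tensor-product quasi-interpolant $\widetilde{\Pi}_{h,x}^{p}\Pi_{\Delta t}^{p}$ in part a, respectively $\Pi_{h,x}^{p}\Pi_{\Delta t}^{p}$ in part b, and use the localization Lemma \ref{lemma3.2} to reduce to a neighborhood of one vertex. In polar coordinates centered at this vertex, each singular summand factors as a product of one-variable functions, and the triangle inequality splits the total error into four contributions: the vertex term, two edge-vertex terms, and the regular remainder.

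For the regular remainder $v_0$ or $\psi_0$, Lemma \ref{approxprop} directly produces the $(h/p)^{s-1/2+\eta}$ and $(\Delta t/p)^{\mu+s-r-1/2}$ contributions (with the obvious adaptation for part b). For each edge-vertex term the angular factor $(\sin\theta)^{\pm 1/2}$ near the corresponding edge reproduces the pure edge singularity treated in Section \ref{secneum}, so Lemma \ref{keylemmagrad2} in the Dirichlet case, respectively Lemma \ref{keylemmagrad} in the Neumann case, produces the $(h/p^2)^{1/2+s-\varepsilon}$ contribution of the final estimate from the angular projection. The accompanying radial factor $\beta_j$ or $\beta_j'$ is one-dimensional in character and is decoupled from the angular and time variables by Lemmas \ref{lemma3.3}--\ref{lemma3.3av}, after which its one-dimensional interpolation error is again bounded via Lemmas \ref{keylemmagrad}, \ref{keylemmagrad2}; these contributions are of higher order than the angular one.

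The vertex term $u^v=C(t)\chi(r)r^{\lambda}\Phi(\theta)$, respectively its Neumann analogue involving $r^{\lambda-1}$, is handled in the same tensor-product fashion: the smooth angular eigenfunction $\Phi$ contributes only a polynomial best-approximation error, while the radial factor is treated by Lemma \ref{keylemmagrad2} (positive power $\lambda$) or Lemma \ref{keylemmagrad} (negative power $\lambda-1$). This yields a rate of the form $(h/p^2)^{\lambda+1/2+s-\varepsilon}$ in the Dirichlet case. For any flat polygonal screen one has $\lambda>0$, so this rate is at least as good as the edge rate and the vertex contribution is absorbed into the first term of the stated bound; the $\min\{\lambda,0\}$ correction in the statement records the rate one would obtain in the hypothetical case of a negative corner exponent.

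The main obstacle is the careful bookkeeping required to apply the tensor-product decoupling Lemmas \ref{lemma3.3}--\ref{lemma3.3av} in the correct order on each of the three singular summands, ensuring that the intermediate projected functions satisfy the sign conditions on the Sobolev indices required at each step, and that the cross-interpolation errors generated by inserting the one-dimensional projections in time, angle and radius successively remain controlled uniformly in $p$ and $h$. Once this is done for all four pieces, summing the bounds yields the claimed estimate in both part a and part b.
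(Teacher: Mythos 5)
Your overall skeleton matches the paper's: you split $u|_\Gamma$ (resp.\ $\partial_n u|_\Gamma$) according to \eqref{decompositiont}, \eqref{decomposition} into a vertex part, two edge-vertex parts and a regular remainder, decouple the time factor with Lemmas \ref{lemma3.3v}/\ref{lemma3.3av}, estimate $C(t)$, $C_j(t)$ with Lemma \ref{proj_time}, and treat the remainder as on the circular screen via Lemma \ref{approxprop}. The gap lies in the spatial approximation of the vertex and edge-vertex singular functions. You propose to treat them as tensor products in polar coordinates and to handle the radial and angular factors separately with the one-dimensional Lemmas \ref{keylemmagrad}, \ref{keylemmagrad2}, glued by Lemmas \ref{lemma3.3}--\ref{lemma3.3av}. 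Those product lemmas are formulated for Cartesian tensor products $u_1(t,x)u_2(y)$ on rectangles $I_1\times I_2$ with a mesh quasi-uniform in the Cartesian variables; they do not apply to a factorization in $(r,\theta)$, since the anisotropic Sobolev norms do not factor in polar coordinates and a quasi-uniform Cartesian mesh is strongly anisotropic in $(r,\theta)$ near the vertex (an element at distance $r$ has angular extent of order $h/r$, so a one-dimensional interpolation bound with mesh width $h$ in the variable $\theta$ is not available). Nor can you retreat to Cartesian coordinates: $u^{ev}_1 \sim C_1(t)\,y^{1/2}r^{\lambda-1}$ and $u^v \sim C(t)\,r^{\lambda}\Phi(\theta)$ are not products of a function of $x$ and a function of $y$, which is exactly why the pure-edge argument of Section \ref{secneum} does not carry over verbatim to the corner region. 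Your claim that the radial contributions ($\beta_j\sim r^{\lambda-\frac12}$, $\beta_j'\sim r^{\lambda}$) are ``of higher order'' than the angular one is asserted, not proved, and is delicate: the cross terms pair a one-dimensional interpolation error with a fractional-order norm of a factor that is itself singular, and the exponent bookkeeping does not obviously reproduce the rates $\min\{\lambda,0\}+\frac12+s$ (edge-vertex) and $\lambda+\frac12+s$ (vertex) appearing in the statement.

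The paper closes exactly this gap by importing the elliptic $hp$-approximation results of Bespalov and Heuer for the full two-dimensional singular functions on quasi-uniform meshes: Theorem \ref{singapproxbh} for $u^v$, $\psi^v$ and Theorem \ref{singapproxevbh} for $u^{ev}$, $\psi^{ev}$, combined only with the time-direction splitting that you also use (Sections \ref{secvsing} and \ref{secevsing}). These elliptic theorems are the essential ingredient missing from your proposal; without them, or an equivalent genuinely two-dimensional approximation argument near the corner, the vertex and edge-vertex estimates, and hence Theorem \ref{approxtheorem2}, do not follow from the one-dimensional lemmas you cite.
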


Theorem \ref{approxtheorem2} follows from the results in Subsections \ref{secvsing} and \ref{secevsing} below, which approximate the leading vertex and edge-vertex singularities. The less singular remainders are approximated as in the previous section.  \textcolor{black}{Theorem \ref{approxtheorem2}} implies a corresponding result for the solutions of the single layer and hypersingular integral equations on the screen:
\begin{corollary}\label{approxcor2} Let $\varepsilon>0$.
a) Let $\phi$ be the solution to the hypersingular integral equation \eqref{hypersingeq} and  $\phi_{h,\Delta t}$ the best approximation {in the norm of ${H}^{r}_\sigma(\R^+, \widetilde{H}^{\frac{1}{2}-s}(\Gamma))$ to $\phi$ in $\widetilde{V}^{p,p}_{\Delta t,h}$} on a quasi-uniform spatial mesh  with $\Delta t \lesssim h$. \textcolor{black}{Assume that the right hand side $g \in {H}^{\alpha}_\sigma(\R^+, \widetilde{H}^{\beta}(\Gamma))$ for some $\alpha,\beta$, so that the regular part $v_0$ \textcolor{black}{belongs to} ${H}^{\mu}_\sigma(\R^+, \widetilde{H}^{\eta}(\Gamma))$  in the singular expansion of $u|_\Gamma$, with $\eta, \mu$ sufficiently large}. Then  $$\|\phi-\phi_{h, \Delta t}\|_{r,\frac{1}{2}-s, \Gamma, \ast} \lesssim \left(\frac{h}{p^2}\right)^{\frac{1}{2}+ \min\{\lambda,0\}+s{-\varepsilon}} + \left(\frac{h}{p}\right)^{-\frac{1}{2}+s+\eta}+  \left(\frac{\Delta t}{p}\right)^{\mu+s-r-\frac{1}{2}}\ ,$$
 where $r \in [0,p)$, $s\in [0,\frac{1}{2}]$.

b) Let $\psi$ be the solution to the single layer integral equation \eqref{dirproblemV} and  $\psi_{h,\Delta t}$ the best approximation  {in the norm of ${H}^{r}_\sigma(\R^+, \widetilde{H}^{-\frac{1}{2}}(\Gamma))$ to $\psi$ in ${V}^{p,p}_{\Delta t,h}$} on a quasi-uniform spatial mesh  with $\Delta t \lesssim h$. \textcolor{black}{Assume that the right hand side $f \in {H}^{\alpha}_\sigma(\R^+, \widetilde{H}^{\beta}(\Gamma))$ for some $\alpha,\beta$, so that the regular part $\psi_0$ \textcolor{black}{belongs to} ${H}^{\mu}_\sigma(\R^+, \widetilde{H}^{\eta}(\Gamma))$  in the singular expansion of $\partial_n u|_\Gamma$, with $\eta, \mu$ sufficiently large}. Then $$\|\psi - \psi_{h,\Delta t}\|_{r, -\frac{1}{2}, \Gamma, \ast} \lesssim \left(\frac{h}{(p+1)^2}\right)^{\frac{1}{2}+\min\{\lambda,0\}{-\varepsilon}} + \left(\frac{h}{p+1}\right)^{\frac{1}{2}+\eta}+  \left(\frac{\Delta t}{p+1}\right)^{\mu+1-r}\ ,$$
 where {$r \in [0,p+1)$}.
\end{corollary}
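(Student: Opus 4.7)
The plan is to deduce Corollary \ref{approxcor2} directly from Theorem \ref{approxtheorem2} by identifying the solutions of the two boundary integral equations with jumps of a solution to the wave equation across the screen, as already anticipated in the introduction and used in Corollary \ref{approxcor1} for the circular case.

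First, I would recall that on the flat polygonal screen $\Gamma$, if $u$ solves \eqref{eq:oriProblemp} with Neumann data $\partial_n u|_\Gamma = g$, then the unique solution $\phi \in H^1_\sigma(\mathbb{R}^+, \widetilde{H}^{1/2}(\Gamma))$ of the hypersingular equation \eqref{hypersingeq} is given by the jump $\phi = [u]|_\Gamma = u|_+ - u|_-$. Since the wave operator and the geometry $\Gamma \subset \mathbb{R}^2 \times \{0\}$ are invariant under reflection across the screen plane, and the Neumann data on $\Gamma$ are prescribed as a single function $g$ on both sides, $u$ is antisymmetric across $\Gamma$, so $\phi = 2\, u|_+$. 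An analogous reasoning for the Dirichlet problem with data $f$ shows that the solution $\psi$ of \eqref{dirproblemV} satisfies $\psi = [\partial_n u]|_\Gamma = 2\, \partial_n u|_+$. This is the verbatim time-dependent analogue of the elliptic result from \cite{epsscreen}, as mentioned in the introduction.

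Second, under these identifications the singular expansions \eqref{decompositiont} and \eqref{decomposition} of $u|_+$ and $\partial_n u|_+$ transport directly onto $\phi$ and $\psi$, with the same vertex and edge-vertex singular exponents and with regular parts $v_0$, $\psi_0$ whose anisotropic Sobolev regularity is controlled by that of the data $g$, respectively $f$. The best-approximation error of $\phi$ in $\widetilde{V}^{p,p}_{\Delta t,h}$ with respect to the norm of ${H}^{r}_\sigma(\R^+, \widetilde{H}^{\frac{1}{2}-s}(\Gamma))$ is therefore, up to the factor $2$, the same as the best-approximation error of $u|_\Gamma$ in that space. Similarly for $\psi$ and $\partial_n u|_\Gamma$ in the norm of ${H}^{r}_\sigma(\R^+, \widetilde{H}^{-\frac{1}{2}}(\Gamma))$. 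Applying Theorem \ref{approxtheorem2}(a) and (b) respectively then yields the two estimates claimed in Corollary \ref{approxcor2}.

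The only nontrivial bookkeeping, which I would treat as the main (minor) obstacle, is to check that the symmetry/antisymmetry of $u$ indeed holds in the present time-domain setting with weighted Sobolev spaces, and that the jump map $u \mapsto [u]|_\Gamma$ and the one-sided trace map $u \mapsto u|_+$ are bounded isomorphisms between the relevant space-time anisotropic Sobolev spaces appearing in Theorem \ref{approxtheorem2} and in the corollary. Both facts follow from the uniqueness of solutions to the Dirichlet and Neumann problems in the weighted energy class together with the mapping properties of $V$ and $W$ recalled in Theorem \ref{mappingproperties}, so no new estimate beyond Theorem \ref{approxtheorem2} is required.
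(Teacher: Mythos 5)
Your proposal is correct and follows essentially the same route as the paper: the corollary is obtained from Theorem \ref{approxtheorem2} via the identifications $\phi = [u]|_\Gamma$ and $\psi = [\partial_n u]|_\Gamma$ for the flat screen (the time-dependent analogue of \cite{epsscreen}), so that the traces' singular expansions and their best-approximation bounds transfer directly to the integral-equation solutions. The additional reflection-symmetry remark (jump equals twice the one-sided trace) is a harmless elaboration consistent with the paper's one-line justification.
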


\subsection{Vertex singularities}\label{secvsing}

To prove Theorem \ref{approxtheorem2} for the Dirichlet trace, we first consider the approximation of the vertex singularities. The regular part is estimated as on the circular screen, and the edge vertex singularities are the content of the following subsection.\\

\textcolor{black}{We recall a key elliptic result for the vertex singularities. Part a) is the content of Theorem 3.6 in \cite{heuer2} and its extension to $hp$ in Theorem 6.1 of \cite{bh08b}, whereas part b) follows from Theorem 3.6 in \cite{heuer3} and its extension to $hp$ in Theorem 5.3 in \cite{bh08}. Note that the singular exponent $\nu$ in these works equals $\frac{1}{2}$ here, and the solution called $u$ in  \cite{bh08} of the equation for the single layer operator  corresponds to $\psi = \partial_n u|_\Gamma$ in our notation.}
\begin{theorem}\label{singapproxbh}
a) Assume $\lambda > 0$ and $p\geq \lambda$. There exists $U^v \in \widetilde{V}_h^p$ such that for all $0 \leq s \leq 1$ $$\|u^v - U^v\|_{s,\Gamma,\ast} \lesssim  \left(\frac{h}{p^2}\right)^{\lambda +1- s-\varepsilon}\ . $$
b) Assume $\lambda > -\frac{1}{2}$ and $p\geq \lambda-1$. There exists $\Psi^v \in V_h^p$ such that for all $-1 \leq s \leq \min\{0,\lambda\}$ $$\|\psi^v - \Psi^v\|_{s,\Gamma,\ast} \lesssim  \left(\frac{h}{(p+1)^2}\right)^{\lambda - s-\varepsilon}\ .$$
\end{theorem}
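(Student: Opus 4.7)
The plan is to reduce the theorem to existing $hp$-approximation results for scalar vertex singularities in the elliptic setting. Since the norm $\|\cdot\|_{s,\Gamma,\ast}$ involves only the spatial variable, the time-dependent prefactors $C(t)$ and $C'(t)$ in $u^v$ and $\psi^v$ play no role at this stage and will be combined with a best approximation in time only later, in the spirit of Section \ref{secneum}, via Lemmas \ref{lemma3.3av} and \ref{lemma3.3v}.

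For part (a), the spatial factor $\chi(r)\, r^\lambda\, \Phi(\theta)$ is, up to the smooth cutoff $\chi$ and the smooth angular function $\Phi$ (which vanishes on the two edges meeting at the vertex), precisely the model vertex singularity whose $hp$-approximation on quasi-uniform meshes is analyzed in Theorem~3.6 of \cite{heuer2} and its $hp$-refinement Theorem~6.1 of \cite{bh08b}. Their construction produces a piecewise polynomial $U^v \in \widetilde{V}_h^p$, continuous and vanishing on $\partial \Gamma$, satisfying the stated bound for $s \in [0,1]$, provided $p \geq \lambda$. Translating their estimate into the present notation yields exactly the claim of part (a).

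For part (b), the spatial factor is $\chi(r)\, r^{\lambda - 1}\, \Phi'(\theta)$, with the more singular radial part $r^{\lambda - 1}$ that does not lie in $L^2$ when $\lambda < 1$. Here I would apply Theorem~3.6 of \cite{heuer3} and its $hp$-version Theorem~5.3 of \cite{bh08}, which treat vertex singularities of negative radial exponent and provide a discontinuous approximant $\Psi^v \in V_h^p$ satisfying the stated estimate in the negative Sobolev norms $\widetilde{H}^s$. The constraint $s \leq \min\{0, \lambda\}$ expresses the local membership $r^{\lambda - 1} \in \widetilde{H}^s$ only for $s < \lambda$, and the hypothesis $p \geq \lambda - 1$ matches the polynomial-degree condition in those references.

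The main obstacle is notational bookkeeping: in \cite{bh08, bh08b} the singular function is typically decomposed into an edge exponent $\nu = \tfrac{1}{2}$ and a vertex exponent $\lambda$, so one must identify the pure vertex contribution $r^\lambda \Phi(\theta)$ (respectively $r^{\lambda-1} \Phi'(\theta)$) treated here with the correct special case of the more general estimates proved there, and check that the angular factors $\Phi, \Phi'$ are compatible with the boundary requirements built into $\widetilde{V}_h^p$ and $V_h^p$. Once this matching is carried out, no additional analytical work is required and the rates $(h/p^2)^{\lambda + 1 - s - \varepsilon}$ and $(h/(p+1)^2)^{\lambda - s - \varepsilon}$ follow verbatim from the elliptic literature.
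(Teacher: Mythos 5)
Your proposal matches the paper's own treatment: Theorem \ref{singapproxbh} is established there precisely by citing Theorem 3.6 of \cite{heuer2} with its $hp$-extension Theorem 6.1 of \cite{bh08b} for part a), and Theorem 3.6 of \cite{heuer3} with Theorem 5.3 of \cite{bh08} for part b), together with the same notational identification (edge exponent $\nu=\tfrac12$, and the function called $u$ in \cite{bh08} corresponding to $\psi=\partial_n u|_\Gamma$). Your additional remark that the time factors $C(t)$, $C'(t)$ are handled separately via Lemmas \ref{lemma3.3av} and \ref{lemma3.3v} is exactly how the paper proceeds in Section \ref{secvsing}, so no gap remains.
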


We now estimate the error of approximating the vertex singularity \eqref{decompositiont} of the Dirichlet trace $u|_\Gamma$ 
\begin{align*}
\|C(t)r^{\lambda} \Phi(\theta) - \Pi_{\Delta t}^{{p}} \widetilde{\Pi}_{x,y}^{{p}} C(t)r^{\lambda} \Phi(\theta)\|_{r,\frac{1}{2}-s,\Gamma,\ast} & \leq \|C(t)r^{\lambda} \Phi(\theta) -  r^{\lambda}  \Phi(\theta) \Pi_{\Delta t}^{{p}} C(t)\|_{r,\frac{1}{2}-s,\Gamma,\ast} \\ & \qquad + \|r^{\lambda} \Phi(\theta) \Pi_{\Delta t}^{{p}} C(t) -  \widetilde{\Pi}_{x,y}^{{p}} r^{\lambda} \Phi(\theta) \Pi_{\Delta t}^{{p}}C(t)\|_{r,\frac{1}{2}-s,\Gamma,\ast} \ .
\end{align*}
For the first term, with Lemma \ref{lemma3.3av} \textcolor{black}{($\frac{1}{2}-s\geq 0$), respectively Lemma \ref{lemma3.3v} ($\frac{1}{2}-s< 0$):}
\begin{align*}
\|C(t)r^{\lambda} \Phi(\theta) -  r^{\lambda}  \Phi(\theta) \Pi_{\Delta t}^{{p}} C(t)\|_{r,\frac{1}{2}-s,\Gamma,\ast}& = \|  r^{\lambda}  \Phi(\theta) (1-\Pi_{\Delta t}^{{p}}) C(t)\|_{r,\frac{1}{2}-s,\Gamma,\ast}\\
& \lesssim \|  r^{\lambda}  \Phi(\theta)\|_{\frac{1}{2}-s,\Gamma, \ast}\| (1-\Pi_{\Delta t}^{{p}}) C(t)\|_{H^{r-s+\frac{1}{2}}_\sigma(\mathbb{R}^+)}\\
& \lesssim \left(\frac{\Delta t}{p}\right)^{\mu+s-r-\frac{1}{2}} \textcolor{black}{\|C(t)\|_{H^\mu_\sigma(\mathbb{R}^+)} }\ .
\end{align*}
For the second term we note
\begin{align*}
\|r^{\lambda} \Phi(\theta) \Pi_{\Delta t}^{{p}}C(t) -  \widetilde{\Pi}_{x,y}^{{p}} r^{\lambda} \Phi(\theta) \Pi_{\Delta t}^{{p}}C(t)\|_{r,\frac{1}{2}-s,\Gamma,\ast} &= \|(1-\widetilde{\Pi}_{x,y}^p) r^{\lambda} \Phi(\theta) \Pi_{\Delta t}^p C(t)\|_{r,\frac{1}{2}-s,\Gamma,\ast}\\
& \lesssim \|(1-\widetilde{\Pi}_{x,y}^p) r^{\lambda} \Phi(\theta)\|_{\frac{1}{2}-s,\Gamma,\ast} \| \Pi_{\Delta t}^p C(t)\|_{H^{r-s+\frac{1}{2}}_\sigma(\mathbb{R}^+)}\ . 
\end{align*}
From Theorem \ref{singapproxbh}a) we conclude
$$\|(1-\widetilde{\Pi}_{x,y}^p) r^{\lambda} \Phi(\theta)\|_{\frac{1}{2}-s,\Gamma,\ast}\lesssim \left(\frac{h}{p^2}\right)^{\lambda +\frac{1}{2}+s-\varepsilon}\ .$$

To estimate the approximation error for the Neumann trace $\partial_n u|_\Gamma$, we note \textcolor{black}{with the expansion from \eqref{decomposition}}
\begin{align*}
\|C'(t)r^{\lambda-1} \Phi'(\theta) - \Pi_{\Delta t}^{{p}} \Pi_{x,y}^{{p}} C'(t)r^{\lambda-1} \Phi'(\theta)\|_{r,-\frac{1}{2},\Gamma,\ast} & \leq \|C'(t)r^{\lambda-1} \Phi'(\theta) -  r^{\lambda-1}  \Phi'(\theta) \Pi_{\Delta t}^{{p}} C'(t)\|_{r,-\frac{1}{2},\Gamma,\ast} \\ & \quad + \|r^{\lambda-1} \Phi'(\theta) \Pi_{\Delta t}^{{p}} C'(t) -  \Pi_{x,y}^{{p}} r^{\lambda-1} \Phi'(\theta) \Pi_{\Delta t}^{{p}}C'(t)\|_{r,-\frac{1}{2},\Gamma,\ast} \ .
\end{align*}
For the first term, with Lemma \ref{lemma3.3v},
\begin{align*}
\|C'(t)r^{\lambda-1} \Phi'(\theta) -  r^{\lambda-1}  \Phi'(\theta) \Pi_{\Delta t}^{{p}} C'(t)\|_{r,-\frac{1}{2},\Gamma,\ast}& = \|  r^{\lambda-1}  \Phi'(\theta) (1-\Pi_{\Delta t}^{{p}}) C'(t)\|_{r,-\frac{1}{2},\Gamma,\ast}\\
& \lesssim \|  r^{\lambda-1}  \Phi'(\theta)\|_{-\frac{1}{2},\Gamma,\ast}\| (1-\Pi_{\Delta t}^{{p}}) C'(t)\|_{H^r_\sigma(\mathbb{R}^+)}\\
& \lesssim \left(\frac{\Delta t}{p+1}\right)^{\mu+1-r}\textcolor{black}{ \|C'(t)\|_{H^{\mu+1}_\sigma(\mathbb{R}^+)} }\ .
\end{align*}
For the second term we note
\begin{align*}
\|r^{\lambda-1} \Phi'(\theta) \Pi_{\Delta t}^{{p}}C'(t) -  \Pi_{x,y}^{{p}} r^{\lambda-1} \Phi'(\theta) \Pi_{\Delta t}^{{p}}C'(t)\|_{r,-\frac{1}{2},\Gamma,\ast} &= \|(1-\Pi_{x,y}^p) r^{\lambda-1} \Phi'(\theta) \Pi_{\Delta t}^p C'(t)\|_{r,-\frac{1}{2},\Gamma,\ast}\\
& \lesssim \|(1-\Pi_{x,y}^p) r^{\lambda-1} \Phi'(\theta)\|_{-\frac{1}{2}, \Gamma, \ast} \| \Pi_{\Delta t}^p C'(t)\|_{H^r_\sigma(\mathbb{R}^+)} \ .
\end{align*}
From Theorem \ref{singapproxbh}b) we conclude
$$\|(1-\Pi_{x,y}^p) r^{\lambda-1} \Phi'(\theta)\|_{-\frac{1}{2},\Gamma,\ast}\lesssim \left(\frac{h}{(p+1)^2}\right)^{\lambda +\frac{1}{2}-\varepsilon}\ .$$

\subsection{Edge-vertex singularities}\label{secevsing}

To conclude the proof of Theorem \ref{approxtheorem2}, it remains to consider the approximation of the edge-vertex singularities.\\

\textcolor{black}{We recall the key elliptic result for the edge-vertex singularities. Part a) is the content of  Theorem 3.5 in \cite{heuer2} and its extension to hp  in Theorem 5.1 of \cite{bh08b}, whereas part b) follows from Theorem 3.4 in \cite{heuer3} and its extension to $hp$ in Theorem 5.1 of \cite{bh08}. }

\begin{theorem}\label{singapproxevbh}
a) Assume $\lambda > 0$ and $p\geq \min\{\lambda, 0\}$. There exists $U^{ev} \in \widetilde{V}_h^p$ such that for all $0 \leq s \leq \min\{\lambda +1,1\}$ 
$$\|u^{ev} - U^{ev}\|_{s, \Gamma, \ast} \lesssim  \left(\frac{h}{p^2}\right)^{1+\min\{\lambda,0\}- s-\varepsilon}\ .$$
b) Assume $\lambda > -\frac{1}{2}$ and \textcolor{black}{$p\geq \min\{\lambda-1,0\}$.} There exists $\Psi^{ev} \in V_h^p$ such that for all $-1 \leq s \leq \min\{0,\lambda\}$ 
$$\|\psi^{ev} - \Psi^{ev}\|_{s, \Gamma, \ast} \lesssim  \left(\frac{h}{(p+1)^2}\right)^{\min\{\lambda,0\} - s-\varepsilon}\ .$$
\end{theorem}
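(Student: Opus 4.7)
The plan is to reduce both parts to tensor-product approximation problems by passing to local Cartesian coordinates. Fix one of the two edges meeting at the vertex and introduce coordinates $(x_1,x_2)$ in which the edge lies along the positive $x_1$-axis, with $x_1=0$ at the vertex, and $x_2$ measures perpendicular distance into $\Gamma$. By the equivalence of polar and Cartesian descriptions recorded in the Appendix of \cite{bh08} and mentioned just after \eqref{decomposition}, the cut-off $\tilde\chi$ concentrates the edge-vertex singularity near this edge, and $u^{ev}$ may be written, up to a smoother remainder, as a tensor product $\beta(x_1)\,x_2^{1/2}$ on a rectangular patch adjacent to the vertex, with the radial factor $\beta$ behaving like $x_1^{\lambda-1/2}$ as $x_1\to 0^+$ and smooth for $x_1>0$. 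Similarly $\psi^{ev}$ reduces to a tensor product whose leading term behaves like $x_1^{\lambda-1}\,x_2^{-1/2}$.

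With this representation the candidate approximant is a tensor product of one-dimensional projections. For part (a) I would take
\[
U^{ev} \;:=\; \widetilde\Pi_{h,x_1}^{\,p+1}\beta(x_1)\;\otimes\;\widetilde\Pi_{h,x_2}^{\,p+1} x_2^{1/2},
\]
and for part (b) the Lagrange interpolants $\Pi_{h,x_1}^{\,p}$ and $\Pi_{h,x_2}^{\,p}$. The one-dimensional factor $x_2^{\pm 1/2}$ is controlled by Lemma \ref{keylemmagrad2} (for $x_2^{1/2}$), respectively Lemma \ref{keylemmagrad} (for $x_2^{-1/2}$), with $a=1/2$. The radial factor, which carries the vertex power $x_1^{\lambda-1/2}$ in (a) and $x_1^{\lambda}$ in (b) near $x_1=0$ and is smooth away from the vertex, is controlled by Lemma \ref{keylemmagrad} respectively Lemma \ref{keylemmagrad2}. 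The tensor-product Sobolev Lemmas \ref{lemma3.3} and \ref{lemma3.3a} convert these one-dimensional rates into the target two-dimensional $\widetilde H^s(\Gamma)$-estimate on the patch. The exponents combine so that the vertex power $\min\{\lambda,0\}$ governs the rate when the corner exponent is small (as is typical, with $\lambda\approx 0.2966$ for the square), while for larger $\lambda$ the edge profile $x_2^{\pm 1/2}$ saturates the convergence; this is exactly the balance that appears in the statement.

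Outside the vertex-adjacent patch the radial factor $\beta$ (resp.\ $\beta'$) is smooth, so standard $hp$-approximation theory — Lemma \ref{proj_space} combined with Lemma \ref{approxprop} — already yields a strictly better rate, which is absorbed into the stated bound. The transition between the two regions is handled by the localization Lemma \ref{lemma3.2}. A final step passes from rectangular patches to the quasi-uniform triangular mesh underlying $V_h^p$ and $\widetilde V_h^p$, along the lines of \cite{disspetersdorff}.

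The main obstacle is the rigorous justification of the Cartesian tensor-product representation: converting $(\sin\theta)^{\pm 1/2}\beta(r)$ from polar to Cartesian form produces correction terms that are \emph{not} pure tensor products, and one has to show that each such correction belongs to a space with strictly higher regularity (so that it is approximated at a faster rate by the standard $hp$ theory) while the weight-structure near the vertex is preserved. Equally delicate is the bookkeeping of the weighted one-dimensional norms: to obtain the optimal exponent $\min\{\lambda,0\}$ rather than a suboptimal $\lambda$ one must distinguish the two regimes $\lambda>0$ and $\lambda\le 0$ inside Lemmas \ref{keylemmagrad}--\ref{keylemmagrad2}, and trade regularity between the two tensor factors in a way that exactly balances the edge contribution $\tfrac12-s$ against the vertex contribution in each case.
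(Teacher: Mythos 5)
The paper does not prove Theorem \ref{singapproxevbh}: it is recalled as a known elliptic approximation result, part a) being Theorem 3.5 of \cite{heuer2} with its $hp$-extension in Theorem 5.1 of \cite{bh08b}, and part b) Theorem 3.4 of \cite{heuer3} with Theorem 5.1 of \cite{bh08}. Your proposal instead sketches a direct proof, so it is by construction a different route; the question is whether it closes, and it does not.

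The decisive gap is the one you flag yourself and then leave open. Everything rests on replacing $\tilde\chi(\theta)\beta_1(r)(\sin\theta)^{\pm 1/2}$ by a pure tensor product in Cartesian variables ``up to a smoother remainder'', after which Lemmas \ref{keylemmagrad}--\ref{keylemmagrad2} combined with Lemmas \ref{lemma3.3}/\ref{lemma3.3a} would indeed give rates of the stated type. But the edge-vertex functions in \eqref{decompositiont}--\eqref{decomposition} couple the two variables (schematically one gets terms like $x_2^{1/2}(x_1^2+x_2^2)^{(\lambda-1)/2}$, with $\beta_1$ only known asymptotically), and proving that the non-tensor corrections are strictly more regular while retaining the weight structure near the vertex is exactly the content of the cited Bespalov--Heuer theorems; their proofs are genuinely two-dimensional (weighted projections and element-by-element analysis near the corner), not a factorwise application of one-dimensional results, and in particular the doubled rate in $p$ for the coupled singularity does not follow from Lemma \ref{keylemmagrad} applied in each variable separately. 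There are further unresolved points: your Cartesian exponents do not match the polar forms as written (with $\beta_1\sim r^{\lambda-1/2}$ and $(\sin\theta)^{1/2}=(x_2/r)^{1/2}$ one gets $x_1^{\lambda-1}x_2^{1/2}$, not $x_1^{\lambda-1/2}x_2^{1/2}$, and similarly in part b)), and since the final exponent $1+\min\{\lambda,0\}-s$ hinges on this bookkeeping, the reduction cannot be left implicit; the degree-$(p+1)$ interpolant of Lemma \ref{keylemmagrad2} must be reconciled with the target space $\widetilde V_h^p$, including continuity and vanishing on $\partial\Gamma$ of the proposed tensor interpolant; and the stability of the projections in the fractional negative-order norms used in the splitting $(f_1-\Pi f_1)\otimes f_2+\Pi f_1\otimes(f_2-\Pi f_2)$ is assumed without argument. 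As it stands, the proposal is a plausible outline whose hard steps coincide with what the paper imports wholesale from \cite{heuer2,heuer3,bh08,bh08b}; the short correct route is the paper's, namely to invoke those theorems after the coordinate-equivalence remark based on the Appendix of \cite{bh08}.
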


Concerning the edge-vertex singularities of the Dirichlet trace $u|_\Gamma$, we restrict ourselves to $u^{ev}_1$ in \eqref{decompositiont}. We bound  the corresponding approximation error by
\begin{align*}&\|u^{ev}_1 - \widetilde{\Pi}_{h,x}^{{p}} \Pi_{\Delta t}^{{p}} u^{ev}_1\|_{r, \frac{1}{2}-s, \Gamma, \ast} \leq \|C_1(t)\tilde{\chi}(\theta)\beta_{1}(r)(\sin(\theta))^{\frac{1}{2}} - \Pi_{\Delta t}^{{p}} \widetilde{\Pi}_{h,x}^{p} C_1(t)\tilde{\chi}(\theta)\beta_{1}(r)(\sin(\theta))^{\frac{1}{2}}\|_{r, \frac{1}{2}-s, \Gamma,\ast} \\ & \leq \|(1-\Pi_{\Delta t}^{{p}}) C_1(t)\tilde{\chi}(\theta)\beta_{1}(r)(\sin(\theta))^{\frac{1}{2}}\|_{r, \frac{1}{2}-s, \Gamma,\ast}+\|(1- \widetilde{\Pi}_{h,x}^{p}) \Pi_{\Delta t}^{{p}} C_1(t)\tilde{\chi}(\theta)\beta_{1}(r)(\sin(\theta))^{\frac{1}{2}}\|_{r, \frac{1}{2}-s, \Gamma,\ast} \\ &
\lesssim \|C_1(t) -\Pi_{\Delta t}^{{p}} C_1(t)\|_{H^{r-s+\frac{1}{2}}_\sigma(\mathbb{R}^+)} \|\tilde{\chi}(\theta)\beta_{1}(r)(\sin(\theta))^{\frac{1}{2}}\|_{\frac{1}{2}-s, \Gamma, \ast} \\ &\ \ \ \ +  \|(1- \widetilde{\Pi}_{h,x}^{p})\tilde{\chi}(\theta)\beta_{1}(r)(\sin(\theta))^{\frac{1}{2}}\|_{\frac{1}{2}-s, \Gamma, \ast}\|C_1(t)\|_{H^{r-s+\frac{1}{2}}_\sigma(\mathbb{R}^+)} \ .
\end{align*}
Here, for the first term we have used Lemma \ref{lemma3.3av}, respectively Lemma \ref{lemma3.3v}. We note that the first term is bounded by $$\|C_1(t) -\Pi_{\Delta t}^{{p}} C_1(t)\|_{H^{r+s-\frac{1}{2}}_\sigma(\mathbb{R}^+)}\lesssim \left(\frac{ \Delta t}{p}\right)^{\mu+s-r-\frac{1}{2}} \|C_1(t)\|_{H^\mu_\sigma(\mathbb{R}^+)}\ .$$
From Theorem \ref{singapproxevbh}a) we have  $$\|(1- \widetilde{\Pi}_{h,x}^{p})\tilde{\chi}(\theta)\beta_{1}(r)(\sin(\theta))^{\frac{1}{2}}\|_{\frac{1}{2}-s, \Gamma, \ast} \lesssim \left(\frac{h}{p^2}\right)^{\min\{\lambda,0\} +\frac{1}{2}+s-\varepsilon}\ .$$

For the edge-vertex singularities of the Neumann trace $\partial_n u|_\Gamma$, we consider $\psi^{ev}_1$ in \eqref{decomposition} and estimate the approximation error as follows:
\begin{align*}&\|\psi^{ev}_1 - \Pi_{h,x}^{{p}} \Pi_{\Delta t}^{{p}} \psi^{ev}_1\|_{r, -\frac{1}{2}, \Gamma, \ast} \leq \|C_1'(t)\tilde{\chi}(\theta)\beta_{1}'(r)r^{-1}(\sin(\theta))^{-\frac{1}{2}} - \Pi_{\Delta t}^{{p}} \Pi_{h,x}^{p} C_1'(t)\tilde{\chi}(\theta)\beta_{1}'(r)r^{-1}(\sin(\theta))^{-\frac{1}{2}}\|_{r, -\frac{1}{2}, \Gamma,\ast} \\ & \leq \|(1-\Pi_{\Delta t}^{{p}}) C_1'(t)\tilde{\chi}(\theta)\beta_{1}'(r)r^{-1}(\sin(\theta))^{-\frac{1}{2}}\|_{r, -\frac{1}{2}, \Gamma,\ast}+\|(1- \Pi_{h,x}^{p}) \Pi_{\Delta t}^{{p}} C_1'(t)\tilde{\chi}(\theta)\beta_{1}'(r)r^{-1}(\sin(\theta))^{-\frac{1}{2}}\|_{r, -\frac{1}{2}, \Gamma,\ast} \\ &
\lesssim \|C_1'(t) -\Pi_{\Delta t}^{{p}} C_1'(t)\|_{H^r_\sigma(\mathbb{R}^+)} \|\tilde{\chi}(\theta)\beta_{1}'(r)r^{-1}(\sin(\theta))^{-\frac{1}{2}}\|_{-\frac{1}{2},\Gamma,\ast}  \\ & \ \ \ \ \ +  \|(1- \Pi_{h,x}^{p})\tilde{\chi}(\theta)\beta_{1}'(r)r^{-1}(\sin(\theta))^{-\frac{1}{2}}\|_{-\frac{1}{2}, \Gamma, \ast}\|C_1'(t)\|_{H^r_\sigma(\mathbb{R}^+)} \ .
\end{align*}
Here, for the first term we have used Lemma \ref{lemma3.3v}. We note that the first term is bounded by $$\|C_1'(t) -\Pi_{\Delta t}^{{p}} C_1'(t)\|_{H^r_\sigma(\mathbb{R}^+)}\lesssim \left(\frac{ \Delta t}{p+1}\right)^{\mu+1-r} \|C_1'(t)\|_{H^{\mu+1}_\sigma(\mathbb{R}^+)}\ .$$
From Theorem \ref{singapproxevbh}b) we have  $$\|(1- \Pi_{h,x}^{p})\tilde{\chi}(\theta)\beta_{1}'(r)r^{-1}(\sin(\theta))^{-\frac{1}{2}}\|_{-\frac{1}{2}, \Gamma, \ast} \lesssim \left(\frac{h}{(p+1)^2}\right)^{\min\{\lambda,0\} +\frac{1}{2}-\varepsilon}\ .$$

\subsection{Singularities for polyhedral domains and approximation}\label{polhed}

The screen in the previous sections was the degenerate case of a polyhedral domain with opening angle $2\pi$ of the wedges, which leads to the strongest singularities. In general, for polyhedral domain with edge opening angles $<2\pi$ the leading edge exponents of the solution $u$ in \eqref{singexp} with either Dirichlet or Neumann conditions are given by $\nu_{1,B}= \frac{\pi}{\alpha}$, where $\alpha$ is the opening angle of the wedge. Schwab and Suri \cite{schwabs} provide $p$-explicit approximation results for the Dirichlet trace. We state the general approximation theorem for the elliptic case, which follows from the results of \cite{schwabs} and (for the Neumann trace) the stronger results of \cite{heuer3}, see Theorems \ref{singapproxbh} and \ref{singapproxevbh} above. 

\begin{theorem}\label{icosrate}
a) There exists a function $u_{hp}\in \widetilde{V}_h^p$ such that for $s \in [0,1]$:
$$\|u-u_{hp}\|_{s,\Gamma,\ast} \lesssim \max\left\{\frac{h^{k-s}}{p^{k-s}}, \frac{h^{\nu-s+\frac{1}{2}}}{p^{2\nu-2s+1}},\frac{h^{\lambda-s+1-\varepsilon}}{p^{2\lambda-2s+2-2\varepsilon}}\right\}\ .$$
Here $v_{0} \in H^{k}(\Gamma)$.\\
b) There exists a function $\psi_{hp} \in V_h^p$ such that:
$$\|\partial_n u-\psi_{hp}\|_{-\frac{1}{2},\Gamma,\ast} \lesssim \max\left\{\frac{h^{k-\frac{1}{2}}}{(p+1)^{k-\frac{1}{2}}}, \frac{h^{\nu}}{(p+1)^{2\nu}},\frac{h^{\lambda+\frac{1}{2}-\varepsilon}}{(p+1)^{2\lambda+1-2\varepsilon}}\right\}\ .$$
Here $\psi_{0} \in H^{k}(\Gamma)$.
\end{theorem}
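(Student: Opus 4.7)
The plan is to decompose the boundary trace (respectively its normal derivative) of $u$ on the polyhedral boundary $\Gamma$ into four pieces using the standard elliptic singular decomposition: a smooth remainder $v_0 \in H^k(\Gamma)$ (resp. $\psi_0 \in H^k(\Gamma)$), pure vertex singularities of type $\rho_v^\lambda$ at each corner, pure edge singularities of type $r_e^\nu$ with $\nu = \pi/\alpha$ for each wedge of opening angle $\alpha < 2\pi$, and mixed edge-vertex contributions concentrated near each incident vertex-edge pair. One then estimates each piece separately via the triangle inequality, localized face-by-face via Lemma \ref{lemma3.2}, and collects the contributions into the three-term maximum on the right hand side.

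For the smooth remainder, I would apply Lemma \ref{proj_space} on each polygonal face to obtain the rate $(h/p)^{k-s}$ in part a), respectively $(h/(p+1))^{k-\frac{1}{2}}$ in part b), producing the first term of the maximum. The pure vertex contribution is exactly the content of Theorem \ref{singapproxbh}: part a) of that theorem yields $(h/p^2)^{\lambda+1-s-\varepsilon}$, and part b) yields $(h/(p+1)^2)^{\lambda+\frac{1}{2}-\varepsilon}$, matching the third term of the maximum in each case. For pure edge singularities of exponent $\nu$, after localization to a tubular neighborhood of an edge, tensor product arguments reduce the question to a one-dimensional $hp$-approximation of $r^\nu$ against a smooth profile along the edge; combining Lemma \ref{keylemmagrad2} (for the Dirichlet trace, where $\nu > 0$) or Lemma \ref{keylemmagrad} (for the Neumann trace, where one sees $r^{\nu-1}$ with $\nu - 1 > -\frac{1}{2}$) with the Schwab-Suri theory \cite{schwabs} yields the rates $(h/p^2)^{\nu+\frac{1}{2}-s}$ and $(h/(p+1)^2)^{\nu}$ respectively, producing the second term of the maximum. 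Finally, the edge-vertex cross terms are handled by Theorem \ref{singapproxevbh}; since on a genuine polyhedron the vertex exponent satisfies $\lambda>0$, the $\min\{\lambda,0\}$ in that theorem vanishes, and the resulting rate is subsumed by the sum of the pure edge and pure vertex contributions.

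The main obstacle here is purely bookkeeping rather than new analysis: one must verify that the four estimates (regular, edge, vertex, edge-vertex) correspond to exactly the three terms of the asserted maximum, and that no mixed product produces a slower decay than the worst of its factors. A minor technical care point is that Theorems \ref{singapproxbh} and \ref{singapproxevbh} are stated for screens, where the characteristic edge exponent is $\frac{1}{2}$; their adaptation to a polyhedral wedge of exponent $\nu = \pi/\alpha$ is immediate because the underlying proofs in \cite{schwabs, heuer3, bh08, bh08b} only exploit the power-type form of the model singular function, not the specific value $\nu=\frac{1}{2}$. Taking the maximum of the four localized contributions and summing over all vertices and edges of $\Gamma$ then yields the two claimed estimates.
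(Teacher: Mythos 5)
Your overall route is the same as the paper's: Theorem \ref{icosrate} is obtained there by splitting the traces according to the elliptic singular expansion (regular part, edge, vertex, edge--vertex) and quoting the elliptic $hp$-results of Schwab--Suri \cite{schwabs} and Bespalov--Heuer \cite{heuer2,heuer3,bh08,bh08b}, i.e.\ the ingredients behind Theorems \ref{singapproxbh} and \ref{singapproxevbh}; your handling of the regular, pure edge and pure vertex parts matches this and reproduces the three terms of the maximum correctly.

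There is, however, a genuine gap in your edge--vertex step as written. Theorem \ref{singapproxevbh} is the \emph{screen} specialization, in which the edge exponent has already been set to $\frac{1}{2}$, so its rates $(h/p^2)^{1+\min\{\lambda,0\}-s-\varepsilon}$ and $(h/(p+1)^2)^{\min\{\lambda,0\}-s-\varepsilon}$ cannot simply be quoted for a polyhedron. For an edge with opening angle $\alpha<2\pi$ one has $\nu=\pi/\alpha>\frac{1}{2}$ and at a polyhedral vertex $\lambda>0$, so the quoted bounds become $(h/p^2)^{1-s-\varepsilon}$ in part a) and, at $s=-\frac{1}{2}$, $(h/(p+1)^2)^{\frac{1}{2}-\varepsilon}$ in part b). These exponents are strictly \emph{smaller} than those of the edge terms ($\nu+\frac{1}{2}-s$, resp.\ $\nu$) and of the vertex terms ($\lambda+1-s-\varepsilon$, resp.\ $\lambda+\frac{1}{2}-\varepsilon$), so the resulting contribution is larger than every term of the asserted maximum: it is not ``subsumed'' but would dominate, and the argument as stated does not yield the claimed estimate. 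The repair is what you gesture at but do not actually invoke in that sentence: use the general-exponent edge--vertex estimates of \cite{heuer2,heuer3,bh08,bh08b} (of which Theorem \ref{singapproxevbh} is the $\nu=\frac{1}{2}$ case), whose rate is controlled by the weaker of the edge and vertex regularities and is therefore bounded by the maximum of the pure edge and pure vertex error terms — this is precisely why no fourth term appears in Theorem \ref{icosrate}. Two smaller points: Lemma \ref{proj_space} is stated only for $s\in[-1,0]$, so for part a) with $s\in[0,1]$ you need its positive-order counterpart for continuous elements vanishing on $\partial\Gamma$ (as used in the proof of Theorem \ref{approxtheorem1}); and Lemma \ref{keylemmagrad2} carries an $\varepsilon$-loss and the cap $\min\{a+\frac{1}{2}-s,\,2-s\}$, so to obtain the clean edge rate $h^{\nu-s+\frac{1}{2}}p^{-(2\nu-2s+1)}$ you should quote \cite{schwabs} (respectively \cite{bh08b,heuer3} for the Neumann trace) directly, as the paper does.
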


Here the second term in the maximum is the approximation error of the edge singular function, while the third is the approximation error of the vertex singular function. The first term in the maximum is due to the approximation of the remainder of the asymptotic expansion.\\

Also in the time dependent case of the wave equation, the edge singularities dominate, except in domains with sharp reentrant corners \cite{petersdorff2}. For the Dirichlet and Neumann traces  the exponents are the same as in the time independent case. Following the above analysis for the screen, by using the estimates for the approximation error of the time-independent singular functions at the vertices and edges from the proof of Theorem \ref{icosrate}, one can show: Let $u$ be the  solution to the homogeneous wave equation with inhomogeneous Neumann boundary conditions $\partial_n u|_\Gamma = g$, \textcolor{black}{with $g \in {H}^{\alpha}_\sigma(\R^+, \widetilde{H}^{\beta}(\Gamma))$ for some $\alpha,\beta$, so that the regular part $v_0$ \textcolor{black}{belongs to} ${H}^{\mu}_\sigma(\R^+, \widetilde{H}^{\eta}(\Gamma))$  in the singular expansion of $u|_\Gamma$, with $\eta, \mu$ sufficiently large}. If $\phi_{h,\Delta t}$ is the best approximation {in the norm of ${H}^{r}_\sigma(\R^+, \widetilde{H}^{\frac{1}{2}-s}(\Gamma))$ to the Dirichlet trace $u|_\Gamma$ in $\widetilde{V}^{p,p}_{\Delta t,h}$} on a quasi-uniform spatial mesh with $\Delta t \lesssim h$, then for every $\varepsilon>0$ $$\|u-\phi_{h, \Delta t}\|_{r,\frac{1}{2}-s, \Gamma, \ast} \lesssim \max\left\{\frac{h^{k-s}}{p^{k-s}}, \frac{h^{\nu-s+\frac{1}{2}}}{p^{2\nu-2s+1}},\frac{h^{\lambda-s+1-\varepsilon}}{p^{2\lambda-2s+2-2\varepsilon}}\right\} + \left(\frac{\Delta t}{p}\right)^{\mu+s-r-\frac{1}{2}}\ .$$
 Here {$r \in [0,p)$}.\\

\textcolor{black}{This result generalizes \textcolor{black}{part a) of Theorem \ref{approxtheorem1} and Theorem \ref{approxtheorem2}} to polyhedral domains instead of flat screens, where $\lambda>0$ and $\nu=\frac{1}{2}$.}\\

Similarly, let $u$ be the  solution to the homogeneous wave equation with inhomogeneous Dirichlet boundary conditions $u|_\Gamma = g$, \textcolor{black}{with $g \in {H}^{\alpha}_\sigma(\R^+, \widetilde{H}^{\beta}(\Gamma))$ for some $\alpha,\beta$, so that the regular part $\psi_0$ \textcolor{black}{belongs to} ${H}^{\mu}_\sigma(\R^+, \widetilde{H}^{\eta}(\Gamma))$  in the singular expansion of $\partial_n u|_\Gamma$, with $\eta, \mu$ sufficiently large}. If $\psi_{h,\Delta t}$ is the best approximation  {in the norm of ${H}^{r}_\sigma(\R^+, \widetilde{H}^{-\frac{1}{2}}(\Gamma))$ to the Neumann trace $\partial_n u|_\Gamma$ in ${V}^{p,p}_{\Delta t,h}$} on a quasi-uniform spatial mesh  with $\Delta t \lesssim h$, then for every $\varepsilon>0$ $$\|\partial_n u - \psi_{h,\Delta t}\|_{r, -\frac{1}{2}, \Gamma, \ast} \lesssim \max\left\{\frac{h^{k-\frac{1}{2}}}{(p+1)^{k-\frac{1}{2}}}, \frac{h^{\nu}}{(p+1)^{2\nu}},\frac{h^{\lambda+\frac{1}{2}-\varepsilon}}{(p+1)^{2\lambda+1-2\varepsilon}}\right\}+ \left(\frac{\Delta t}{p+1}\right)^{\mu+1-r}\ .$$
Here {$r \in [0,p+1)$}.\\

\textcolor{black}{This estimate generalizes \textcolor{black}{part b) of Theorem \ref{approxtheorem1} and Theorem \ref{approxtheorem2}} to polyhedral domains.}\\

\textcolor{black}{Similar to Corollary \ref{approxcor1} for the circular screen, respectively Corollary \ref{approxcor2} for the polygonal screen, also for a polyhedral domain the approximation rates for the Dirichlet and Neumann traces  translate into approximation rates  for appropriate boundary integral equations:} $W\phi = (\frac{1}{2}-K')g$ for the Neumann problem, respectively $V \psi = (\frac{1}{2}-K)f$ for the Dirichlet problem.

\section{Numerical experiments}\label{experiments}

\subsection{Implementation of single layer operator}

On the left hand side of \eqref{weakformhp}, we use  ansatz, respectively test  functions $$\psi_{\Delta t, h}(t,x)=\sum_{m=1}^{N_t} \sum_{i=1}^{N_s}c_m^i\gamma_{\Delta t}^m(t){\psi_h^i(x)  \in V^{p,p}_{h,\Delta t}}, \quad \Psi^{n,l}(t,x)=\gamma_{\Delta t}^{n}(t){\psi_h^l(x)}  \in V^{p,p}_{h,\Delta t}$$ to obtain for the single layer potential:
\begin{align*}
\int_0^\infty \int_\Gamma (V \psi_{\Delta t,h}) \dot{\gamma}^n_{\Delta t}{\psi_h^l} dx dt 
&= \sum_{m,i}c_m^i\frac{1}{4\pi} \int_0^\infty \int_{\Gamma \times \Gamma}\frac{1}{|x-y|}\gamma_{\Delta t}^m(t-|x-y|)
{\psi_h^i}(y)\dot{\gamma}^n_{\Delta t}(t){\psi_h^l}(x) dx dy dt\\
&= \sum_{m,i}c_m^i\frac{1}{4\pi} \int_{\Gamma \times \Gamma}\frac{{\psi_h^i}(y){\psi_h^l}(x)}{|x-y|}\int_0^\infty\gamma_{\Delta t}^m(t-|x-y|)\dot{\gamma}^n_{\Delta t}(t)\ dt\ dx dy
\end{align*}
\noindent for all $n=1, ..., N_t$ and $l=1, ..., N_s$. Here, we use a dot to denote the time derivative.

For example, for piecewise linear basis functions, $p=1$ in space and time, a calculation of the time integral shows: 
\begin{align*}
&\int_0^\infty\gamma_{\Delta t}^m(t-|x-y|)\dot{\gamma}^n_{\Delta t}(t)\ dt\\
&= - \frac{1}{2 (\Delta t)^2} (t_{n-m+2}-|x-y|)^2 \chi_{E_{n-m+1}}(x,y) \\
&\qquad +\left(\frac{1}{(\Delta t)^2} (t_{n-m+1}-|x-y|)^2 +\frac{1}{2(\Delta t)^2} (t_{n-m}-|x-y|)^2 -1\right)\chi_{E_{n-m}}(x,y) \\
&\qquad-\left(\frac{1}{(\Delta t)^2} (t_{n-m-1}-|x-y|)^2 +\frac{1}{2(\Delta t)^2} (t_{n-m}-|x-y|)^2 -1\right)\chi_{E_{n-m-1}}(x,y) \\
&\qquad+\frac{1}{2(\Delta t)^2} (t_{n-m-2}-|x-y|)^2 \chi_{E_{n-m-2}}(x,y) \ ,
\end{align*}
with $$E_l = \lbrace (x,y) \in \Gamma \times \Gamma : t_{l} \leq |x-y| \leq t_{l+1}\}\ .$$ Formulas for higher polynomial degree may be found in \cite{stephan2008transient}. After the time integral is evaluated analytically, the spatial integrals are approximated using a composite $hp$-graded quadrature \cite{gimperleinreview}.\\

The Galerkin discretization leads to a block--lower--Hessenberg system of equations, see Figure \ref{system}. Here the blocks $V^{l}$ correspond to the matrix with entries
$$V^{n-m}_{il} = \int_0^\infty \int_\Gamma (V \gamma^m_{\Delta t}{\psi_h^i}) \dot{\gamma}^n_{\Delta t}{\psi_h^l} dx dt\ .$$
The system can be solved with an approximate time stepping scheme, respectively a space-time preconditioned GMRES method \cite{dsprec}.

 Note that the common, but non-conforming MOT time stepping schemes are based on piecewise constant test functions in time. Then $E_{l+1}$ does not contribute to the matrix entries of $V$, so the block $V^{-1}=0$, and one obtains a block--lower--triangular system of equations. 

\begin{figure}[htbp]
 \centering
 \includegraphics[height=5.2cm]{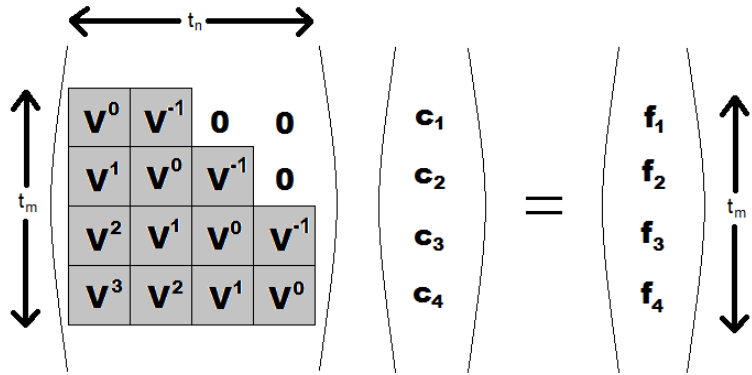}
 \caption{Full space-time system for the $hp$-version of the TDBEM.}
 \label{system}
\end{figure}

\subsection{Wave equation outside a screen}\label{experimentsscr}
 
\begin{example}\label{example0p}
Using the discretization by piecewise polynomials of degree $p$ described above, we compute the solution to the integral equation $V\psi=f$ on $\mathbb{R}_t^+ \times \Gamma$, with the square screen $\Gamma = \{(x,y,0) : -\frac{1}{2} \leq x,y\leq \frac{1}{2}\}$ depicted in Figure \ref{screenmesh}. We use a discretization with $8$ triangles and $9$ nodes in space, a time step $\Delta t = 0.5$, respectively $1.0$, and study the convergence of the numerical solution as the polynomial degree is increased. \textcolor{black}{We compute the solution $\psi_{p}$ up to time $T=4$ and compare the error in the energy norm $|\langle V \psi_{p}, \partial_t \psi_{p} \rangle_{[0,T]\times \Gamma}|^{1/2} = (\int_{0}^{T} \int_{\Gamma} (V  \psi_{p}) \partial_{t} \psi_{p})^{1/2}$ for various right hand sides}. 
\end{example}
\begin{figure}
  \centering
  \includegraphics[width=5cm]{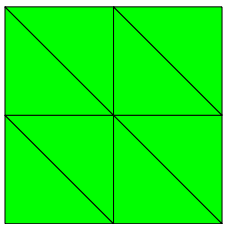}
   \caption{Screen mesh with 8 triangular elements and 9 nodes.}\label{screenmesh}
\end{figure}

\begin{figure}
  \centering
  \hspace*{-1.0cm}
   \includegraphics[width=5.8cm]{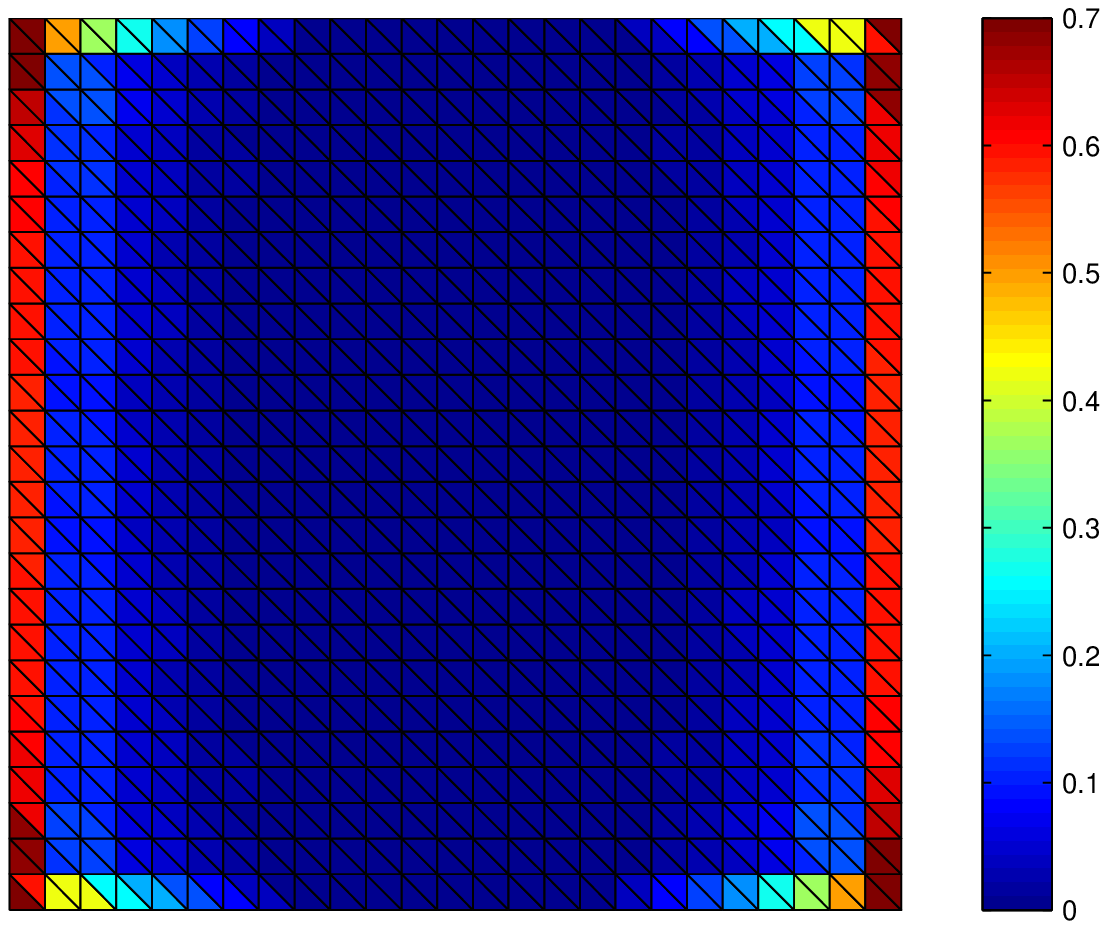}
    \includegraphics[width=5.8cm]{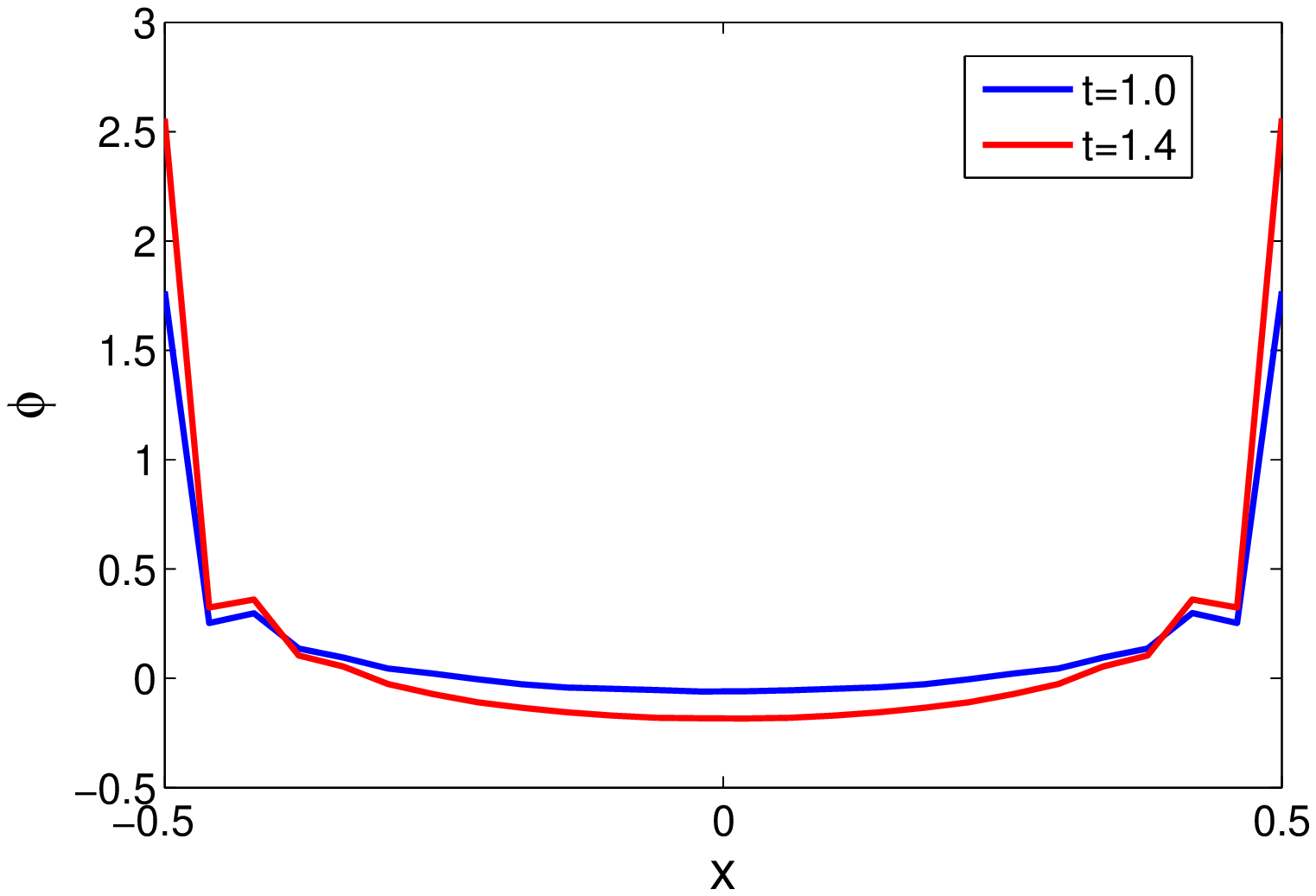}
    \caption{Density $\psi$ computed by $h$-method on a uniform mesh with $1250$ triangles for $f_1$ (left), cross section $y=0$ at $t=1.0, 1.4$ (right).}\label{screensoln}
\end{figure}

From \cite{graded}, the convergence rate in energy norm of the uniform $h$-method on the screen is $0.5$ as $h$ tends to $0$. A cross section at $y=0$  of the solution for the right hand side $$f_1(t,(x,y,z)^{T})=\sin^5(t)x^2$$ is shown in Figure \ref{screensoln}, for a uniform triangulation of $\Gamma$ with $1250$ triangles at times $t=1.0$ and $1.4$. The cross section shows the edge singularities of the solution, as well as unphysical oscillations as numerical errors near the boundary. It indicates the  difficulty of approximating the singularities numerically. 

\begin{figure}
  \centering
  \includegraphics[width=11cm]{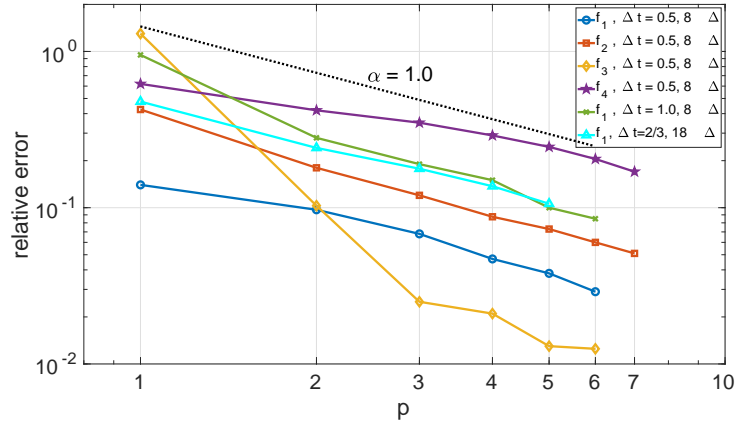}
   \caption{Relative error in energy norm for the single-layer equation on a square screen, Example \ref{example0p}.}\label{screenplots}
\end{figure}\

\begin{figure}
  \centering
  \includegraphics[width=7cm]{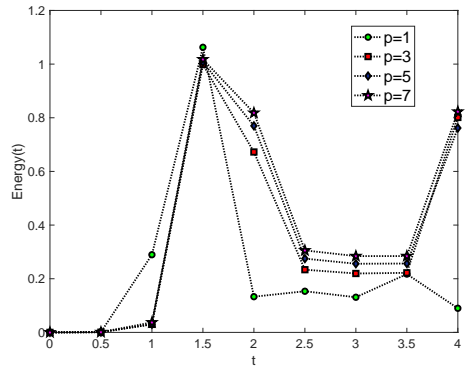}
   \caption{Energy as a function of time for time-singular $f_4$, Example \ref{example0p}.}\label{screensingular}
\end{figure}\

For this  right hand side $f_1$, Figure \ref{screenplots} depicts the convergence in energy norm of a $p$-method up to polynomial degree $p=6$ in space and time. The empirical convergence rate for $\Delta t = 0.5$ (blue dots) is approximately $1.21$. For $\Delta t = 1.0$ the convergence rate is $1.18$ (yellow crosses). The results reflect the expected doubling of the convergence rate for the $p$-method, compared to the $h$-method.

The results are confirmed for plane-wave right hand sides at low frequencies. For the right hand side $$f_2(t,(x,y,z)^{T})=\exp(-2/t^2)cos(\omega t - {k} (x,y,z)^{T})\ ,$$ with ${k} = (2, 0.5, 0.1)$ and $\omega = |{k}|$, Figure \ref{screenplots} (red squares) shows the convergence in energy norm of the $p$-version with rate $1.02$ up to $p=7$, for $\Delta t = 0.5$. For the higher-frequency wave $$f_3(t,(x,y,z)^{T})=\exp(-2/t^2)cos(\omega t - {k} (x,y,z)^{T})\ ,$$ with ${k}= (6, 0.5, 0.1)$ and $\omega = |{k}|$, piecewise linear or quadratic polynomials provide a poor approximation, as shown in Figure  \ref{screenplots} (black diamonds) when $\Delta t = 0.5$. At higher $p$ the convergence rate becomes approximately $1.01$, in agreement with the results for $f_1$ and $f_2$. 

As a last right hand side, a source which is nonsmooth in time is considered, $$f_4(t,(x,y,z)^{T})=\sin^5(t) |1-t|^\alpha \cos({k}\cdot (x,y,z))\ ,$$ with $\alpha=\frac{1}{2}$ and ${k} = (6, 0.5, 0.1)$. Note the square-root singularity in time in this right hand side.  Figure \ref{screensingular} shows the ``energy'' $E(t) = \frac{1}{2}\langle V \psi, \partial_t \psi\rangle_{[0,t]\times \Gamma}- \langle f, \partial_t \psi\rangle_{[0,t]\times \Gamma}$ as a function of time at multiples of the time step $\Delta t = 0.5$, for $p=1,3,5,7$. While the solutions for different $p$ closely agree for short times, after the kink of the right hand side at $t=1$ only higher polynomial degrees $p$ provide similar approximations. The convergence rate in energy norm here is $0.78$, see Figure \ref{screenplots} (green stars), less than for $f_1$, $f_2$ and $f_3$.\\

\textcolor{black}{A final computation  discretizes the screen with $18$ triangles and $16$ nodes in space, and uses a time step $\Delta t = \frac{2}{3}$. The  numerical solution is considered up to $T=3.33333$ as the polynomial degree $p$ is increased. %We compute the energy norm for $p=1,2,3,4,5$ and compare it with an Aitken extrapolated benchmark $0.023417$. 
The error in the energy norm goes to zero at rate $1.01$, as depicted in Figure \ref{screenplots} and is smaller than the corresponding error on $9$ triangles with time step $\Delta t = 1.0$.}

The second example studies the $h$-method for different polynomial degrees $p$.
%\begin{figure}
%  \centering
%  \includegraphics[width=12cm]{relativeenergyerror_at3-3333-extrapolated_with_Aitken2.eps}
%   \caption{Relative error in energy norm for the single-layer equation on a square screen for 18 triangles and $\Delta t=2/3$ for $p=1,2,3,4,5$, Example \ref{18triexampp5}.}\label{screenplots18trip5}
%\end{figure}\
\textcolor{black}{
\begin{example}\label{screenhversionp123}
Using the discretization by piecewise polynomials of degree $p=1,2,3$ from Example \ref{example0p}, we compute the solution to the integral equation $V\psi=f$ on $\mathbb{R}_t^+ \times \Gamma$, with the square screen $\Gamma = \{(x,y,0) : -\frac{1}{2} \leq x,y\leq \frac{1}{2}\}$ as above. We  study the convergence of the numerical solution in the square  of the energy norm at time $T=2$ as the mesh is refined. As benchmark, we use solutions for $\Delta t = 0.166$ and $288$ triangles for $p=1$ and  $\Delta t = 0.25$ and $128$ triangles for $p=2,3$. 
\end{example}
}
\textcolor{black}{
Figure \ref{screenplotshversion}  shows the $h$-version for $f_1$ and $f_4$ from above. We observe that for $p=3$ we obtain for $f_{1}$ a convergence rate of $0.52$ and for $f_{4}$ a rate of $0.468$, which is in a good agreement with the expected value of $0.5$. For $p=2$ and $f_{4}$ we get a rate of $0.48$, and for $f_{1}$ we get a rate of $0.425$.
The kink in the last point can be explained by remarking that the refinement with $72$ triangles and $\Delta t=1/3$ is close to the benchmark for $p=2,3$. We get the same kink for $p=1$, where the refinement is $200$ triangles with $\Delta t = 0.2$. For $p=1$ and $f_{4}$ we get a rate of $0.73$, where the middle part of $p=1$ for $f_{1}$ gives $0.53$. The achieved convergence rates for $p=1$ are due to the preasymptotic region.  The convergence rates correspond to a rate of $0.5$ in terms of $h$.
}
\textcolor{black}{
The numerical results underline our theoretical conclusions from the analysis in this article: The convergence rate of the $h$-method is half the convergence rate of the $p$-method. It is independent of the polynomial degree.
}

\begin{figure}
  \centering
  \includegraphics[width=12cm]{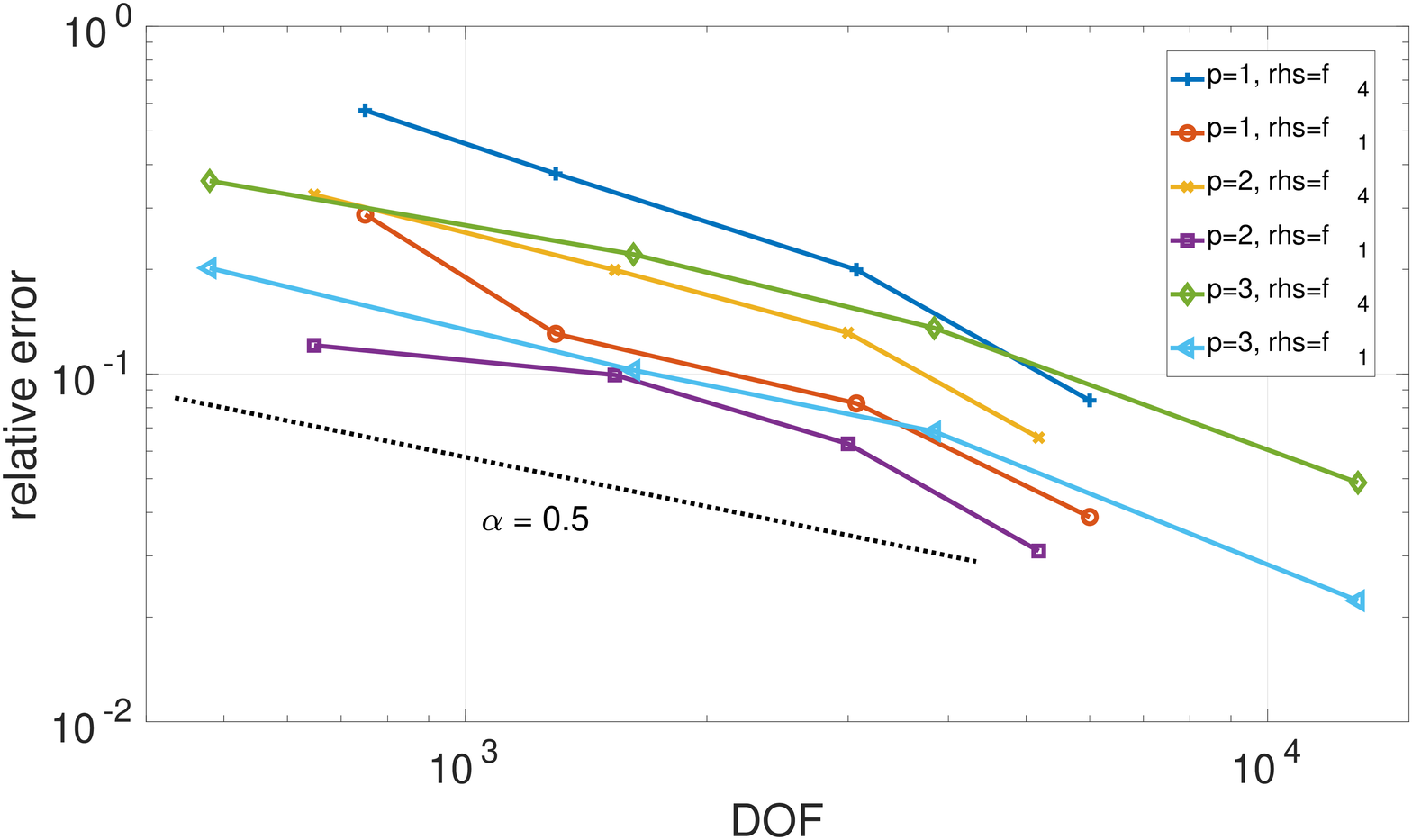}
   \caption{Relative error in the square  of the energy norm for the single-layer equation on a square screen, $h$-version for $p=1,2,3$, Example \ref{screenhversionp123}.}\label{screenplotshversion}
\end{figure}\

\subsection{Wave equation outside an icosahedron}\label{experimentsico}
 
\begin{example}\label{example1p}
Using the discretization by piecewise polynomials of degree $p$ described above, we compute the solution to the integral equation $V\psi=f$ on $\mathbb{R}_t^+ \times \Gamma$, for the icosahedron $\Gamma$  depicted in Figure \ref{icosmesh1}. We use the discretization given by the $20$ triangular faces of the icosahedron with $12$ vertices and a time step $\Delta t = 0.5$. The convergence of the numerical solution is studied as the polynomial degree is increased.  Different right hand sides $f$ are considered. We compute the solution for long times up to $T=11$ and compare to an extrapolated benchmark energy as in Example \ref{example0p}. Based on Section \ref{polhed} for the direct integral equation $V\psi=(\frac{1}{2}-K) f$ one expects  a convergence rate for the $p$-version of $1.62$, dominated by the edge singularities. 
\end{example}
\begin{figure}
  \centering
  \includegraphics[width=7cm]{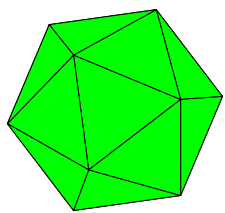}
   \caption{Icosahedron with 20 triangles and 12 vertices.}\label{icosmesh1}
\end{figure}
\begin{figure}
  \centering
  \includegraphics[width=5cm]{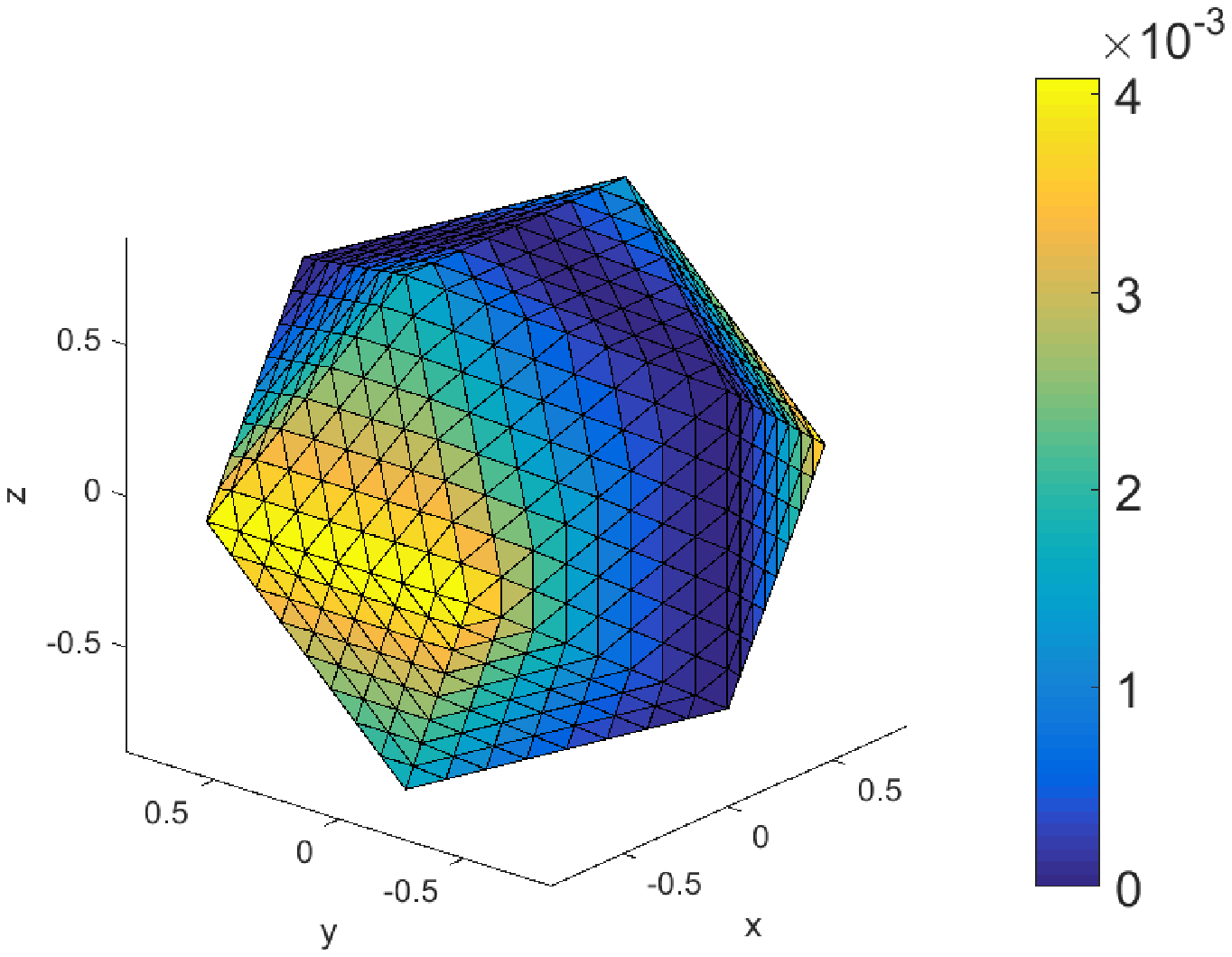}
   \includegraphics[width=5cm]{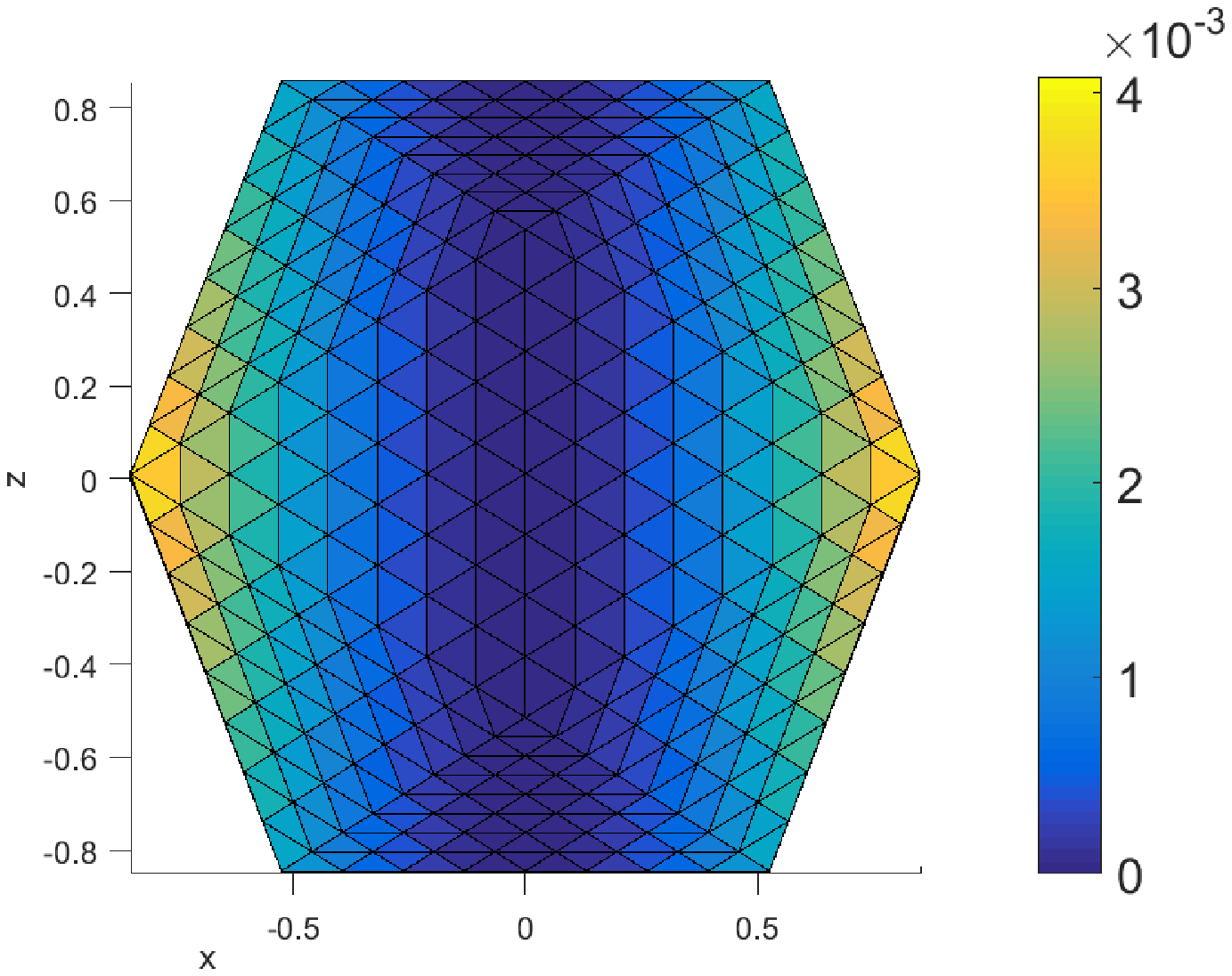}
   \caption{Density $\psi$ computed by $h$-method on a uniform mesh with $1280$ triangles for $f_1$.}\label{icosdens1}
\end{figure}

A picture of the smooth solution at time $t=0.5$ for the right hand side $$f_1(t,(x,y,z)^{T})=\sin^5(t)x^2$$ is shown in Figure \ref{icosdens1}, computed using an $h$-method on a uniform triangulation of $\Gamma$ with $1280$ triangles and time step $\Delta t = 0.1$.

\begin{figure}
  \centering
 \includegraphics[width=9cm]{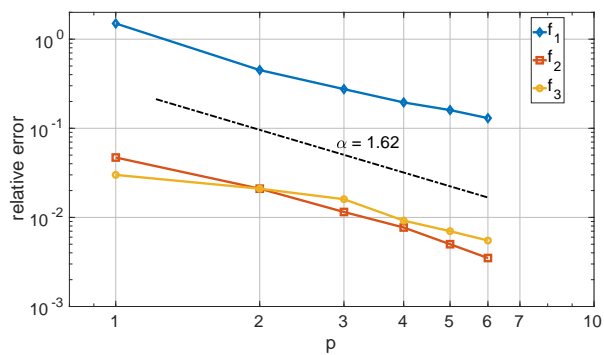}
   \caption{Relative error in energy norm of $p$-method for the single-layer equation on icosahedron, Example \ref{example1p}.}\label{icosconvergence}
\end{figure}

Figure \ref{icosconvergence} shows the convergence of the $p$-method in the energy norm for the right hand side $f_1$ from above (blue circles). The empirical convergence rate is $1.46$ as the polynomial degree $p$ is increased. Figure \ref{sauterlong} shows the possibility of long-time simulations and plots the energy of the numerical solution with $p=6$ as a function up to time $t=11$ at multiples of the time step $\Delta t = 0.5$.  Figure \ref{icosdifference} depicts the difference $|E_6(t) - E_p(t)|$ between the energy of the $p$-method solution for $p=6$ and the numerical solutions for $p=1,2,\dots, 5$.  The error remains stable over the time interval, reflecting the space-time variational discretization used \cite{gimperleintyre}.  

A second right hand side investigates  a plane-wave $$f_2(t,(x,y,z)^{T})=\exp(-2/t^2)\cos(\omega t - {k} (x,y,z)^{T})\ ,$$ with ${k} = (3, 0.5, 0.1)$ and $\omega = |{k}|$.  The convergence rate in this case is approximately $1.61$, see Figure \ref{icosconvergence}, in agreement with the analysis and slightly higher than for $f_1$. 

Finally, a right hand side with a singularity in space is considered, $$f_3(t,(x,y,z)^{T})=\sin^5(t)|\sin({k}(x,y,z)^{T})|^\alpha\ ,$$ $\alpha=\frac{1}{2}$ and ${k} = (2, 0.5, 0.1)$.  The convergence rate here is lower, $1.22$. Note that the solution $\psi$ has a singularity in space on the lines ${k}{x} = k\pi$, $k \in \mathbb{Z}$, similar to the edge singularities in Example \ref{example0p}. The convergence rate in Figure \ref{icosconvergence} is therefore reduced to values closer to those seen for screen problems in Example \ref{example0p}.

\begin{figure}
  \centering
  \includegraphics[width=7cm]{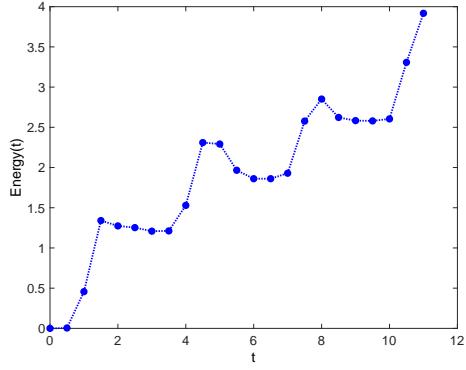}
   \caption{Energy as a function of time up to $t=11$ for the right hand side $f_1$, Example \ref{example1p}.}\label{sauterlong}
\end{figure}\

\begin{figure}
  \centering
  \includegraphics[width=7cm]{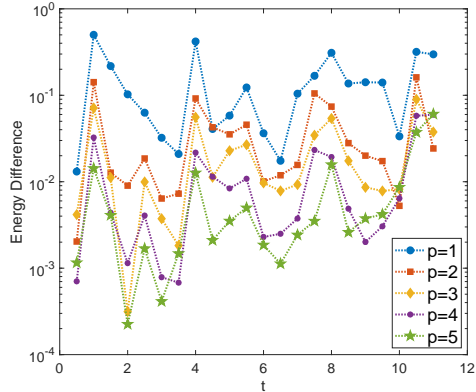}
   \caption{Energy difference between $p=6$ and lower $p$, as a function of time, Example \ref{example1p}.}\label{icosdifference}
\end{figure}\

 \section{Conclusions}\label{secconc}

In this work we initiate the study of $p$- and $hp$-version boundary elements for the wave equation. The analysis and numerical experiments show the efficient approximation of both smooth solutions and  geometric singularities in polyhedral domains, with the same convergence rates as known for $p$- and $hp$-approximations of time independent problems \cite{bh08, schwabs}. \\

For singular solutions the quasi-optimal $hp$-explicit estimates in this article complement the recent analysis of low-order approximations on algebraically graded meshes, for both finite and boundary element methods \cite{graded, schwabm}. In both cases the convergence is determined by the singularities of the solution at non-smooth boundary points of the domain. The analysis combines the time independent approximation results \cite{bh08} with the work by Plamenevskii and co-authors on the leading singular terms in the time dependent problem \cite{plamenevskii}. For screen problems the energy error $O(p^{-1})$ of the $p$-version has the same convergence rate as for an $h$-version on a $2$-graded mesh. For open polyhedral domains the solutions are less singular, and accordingly higher convergence rates are obtained. Numerical experiments illustrate these  on the icosahedron.\\

%The a posteriori error estimate in this article provides a basis for  $hp$-adaptive mesh refinement procedures for time domain boundary elements, building on first works for adaptive $h$-version boundary elements \cite{apost, glaefke}. Given the success of time independent $hp$-adaptive methods for time independent problems \cite{gwinsteph, schwab}, their extension to  boundary element methods in the time domain provides a goal for future work.

\appendix
\section{Asymptotic expansion for the solution}

In the following, let us describe the approach by Plamenevskii and coauthors (given in \cite{kokotov3}) to prove the asymptotic expansion of the solution to the wave equation near a singular point of the domain. For ease of comparison with the work of Plamenevskii, this Appendix adopts some of the notation from the analysis community e.g. the  $\sigma>0$ from the main body of the article is here called $\gamma$.\\

Consider the Dirichlet problem in the infinite cylinder $Q=K\times \mathbb{R}$
 \begin{align}\label{eq:dirplamenvskiiwave}
   \partial_t^{2}u - \Delta_{x}  u = f \ \text{ in } K \times \mathbb{R}, \nonumber \\
   u|_{\partial K \times \mathbb{R}} = 0 ,
 \end{align}
 where $K$ is an open cone in $\mathbb{R}_{x}^{n}$, $\Omega=K \cap S^{n-1}$ and let the boundary $\partial \Omega$ be smooth.
Applying the Fourier transform $\mathcal{F}_{t \rightarrow \tau}$, with $\tau=\sigma-i\gamma$, $\sigma \in \mathbb{R}$, $\gamma > 0$ to \eqref{eq:dirplamenvskiiwave} gives 
\begin{align}\label{eq:dirfourierplamenvskii}
 (-\Delta_{x} - \tau^{2}) u &=g \ \text{ in } K  \ , \ \ g(\cdot) = \mathcal{F} f(\tau,\cdot) =: \hat{f}(\tau,\cdot) ,\nonumber \\
 u|_{\partial K} &= 0.
\end{align}
Let $A(\tau)$ denote the closure in $L_{2}(K)$ of the operator $-\Delta_{x}-\tau^{2}$ which is originally defined for functions $\chi(|y|)|y|^{i \lambda_{k}} \Phi_{k}(\frac{y}{|y|})$, $k=1,2,\ldots $, where $\chi \in C_{0}^{\infty}(\overline{\mathbb{R}}_{+})$ is a cut-off function equal to one near the origin and $y \in K$. Here $\Phi_{k}$ are the eigenfunctions of the pencil $\mathcal{A}_{D}$ introduced with $i \lambda = \nu$ and $d=0$, $n=3$ in Section \ref{sectasympt}. 

\ \\
As shown in \cite{plamenevskii} for any $g \in L_{2}(K)$ and $\tau=\sigma-i \gamma$, $\gamma>0$, there exists a unique solution of $A(\tau) u = g$ satisfying
\begin{equation}\label{eq:energyestimate} 
 \gamma^{2} (|\tau|^{2} \lVert u \rVert_{L_{2}(K)}^{2} + \lVert \nabla u \rVert_{L_{2}(K)}^{2} ) \leq c \lVert g \rVert_{L_{2}(K)}^{2}
\end{equation}
with a constant $c$ independent of $g$ and $\tau$. If $\lambda$ is an eigenvalue of $\mathcal{A}_{D}$, then the homogeneous problem \eqref{eq:dirfourierplamenvskii} has a solution
\begin{equation}\label{eq:solutionhomogwave}
 r^{i \lambda} \sum_{k \geq 0} (\tau r)^{2 k} \Psi_{k}(\omega) \ ; \quad r=|x| \ , \ \omega=\frac{x}{|x|}
\end{equation}
where $\Psi_{0} = \Phi_{j}$ for $\lambda=\lambda_{j}$. Denote the series \eqref{eq:solutionhomogwave} by $ w_{j}$ if $\lambda=\lambda_{j}$, $j>0$, and by $\tilde{w}_{- j}(x,\tau)$ if $\lambda=\lambda_{-j}$, $j>0$ and by $w_{-}^{N}(x,\tau) (\tilde{w}_{-j}^{N}(x,\tau))$ their $N$-th partial sum.

\ \\
Let $\zeta \in C_{0}^{\infty}(\mathbb{R}^{3})$ and $\zeta=1$ near the origin $0$ and let $M\in \mathbb{R}$ be large such that
\begin{equation*}
 \tilde{g}:= (-\Delta_{x} - \tau^{2})(\zeta \tilde{w}_{-j}^{M}) \in L_{2}(K) .
\end{equation*}
Then the problem \eqref{eq:dirfourierplamenvskii} with $g=\tilde{g}$ and $u|_{\partial K} = 0$ has a solution $u$.
Setting $w_{j}:=\zeta \tilde{w}_{j}^{M} - u$, one observes that $w_{-j}$ depends neither on the cut-off function $\zeta$ nor on the number $M$ and $w_{-j}$ solves the homogeneous problem \eqref{eq:dirfourierplamenvskii} and has the asymptotic expansion $\tilde{w}_{-j}^{M}(x,\tau)$ near 0.
Replace $\tau$ by $\overline{\tau}$ in \eqref{eq:dirfourierplamenvskii} and denote by $w_{-j}(x,\overline{\tau})$ a corresponding solution with asymptotics $\tilde{w}_{-j}^{M}(x,\tau)$.

\ \\
For $\beta \leq 1$, $\chi \in C_{0}^{\infty}(\overline{\mathbb{R}}_{+})$, $\chi=1$ near the origin, define the space $DH_{\beta}(K,|\tau|)$ as completion of $C_{0}^{\infty}(\overline{K}\backslash 0)$ with respect to the norm
\begin{equation*}
  \lVert v \rVert_{DH_{\beta}(K,|\tau|)} := (\lVert \chi_{|\tau|} v \rVert_{H_{\beta}^{2}(K,|\tau|)}^{2}+\gamma^{2} \lVert v \rVert_{H_{\beta}^{1}(K,|\tau|)})^{1/2} 
\end{equation*}
where $\chi_{|\tau|}(y) = \chi(|\tau| |y|)$. Furthermore introduce
\begin{equation*}
    \lVert v \rVert_{RH_{\beta}(K,|\tau|)} := (\lVert f \rVert_{H_{\beta}^{0}(K)}^{2}+ \frac{|\tau|^{2-2 \beta}}{\gamma^{2}} \lVert f \rVert_{L_{2}(K)})^{1/2} .
\end{equation*}
Here we have used (see \cite{plamenevskii}) for $s>0$ integer, $\beta \in \mathbb{R}$, the space $H_{\beta}^{s}(K)$ being the completion of the set $C_{0}^{\infty}(\overline{K}\backslash 0)$ with respect to the norm
\begin{equation*}
   \lVert u \rVert_{H_{\beta}^{s}(K)} = (\sum_{|\alpha|\leq s} \int_{K} |y|^{2 (\beta-s+|\alpha|)} |D_{y}^{\alpha} u(y)|^{2} dy)^{1/2} ,
\end{equation*}
and for $q>0$, the space $H_{\beta}^{s}(K,q)$ with the norm
\begin{equation*}
   \lVert u \rVert_{H_{\beta}^{s}(K,q)} = (\sum_{k=0}^{s} q^{2k} \lVert u \rVert_{H_{\beta}^{s-k}(K)}^{2})^{1/2} .
\end{equation*}
Denote by $DV_{\beta}(K\times\mathbb{R}; \gamma)$ the space with the norm
\begin{equation*}
 \lVert u \rVert_{DV_{\beta}(K\times \mathbb{R} ; \gamma)} = ( \int_{\Im \tau = - \gamma} \lVert \mathcal{F}_{t \rightarrow \tau} u(\cdot, t)\rVert_{DH_{\beta}(K,|\tau|)}^{2} d\tau)^{1/2} ,
\end{equation*}
and by $RV_{\beta}(K\times\mathbb{R}; \gamma)$ the space with the norm
\begin{equation*}
 \lVert f \rVert_{RV_{\beta}(K\times \mathbb{R} ; \gamma)} = ( \int_{\Im \tau = - \gamma} \lVert \mathcal{F}_{t \rightarrow \tau} f(\cdot, t)\rVert_{RH_{\beta}(K,|\tau|)}^{2} d\tau)^{1/2} .
\end{equation*}
Also one introduces the operators
\begin{equation*}
 (Xu)(y,t) := \int_{\Im \tau = - \gamma} e^{i t \tau} \chi(|\tau| |y|) u(\tau) d\tau
\end{equation*}
and
\begin{equation*}
 (\Lambda f)(y,t) = \mathcal{F}^{-1}_{\tau \rightarrow t} |\tau| \mathcal{F}_{t' \rightarrow \tau} f(y,t') .
\end{equation*}
One sets 
\begin{equation*}
  U_{j}^{L_{j}}(\nu, \omega_{j}) = \sum_{k=0}^{L_{j}-1} \nu^{2k} \Psi_{k}(\omega),
\end{equation*}
where $\Psi_{k}$ as in \eqref{eq:solutionhomogwave} with $\lambda=\lambda_{j}$ and $L_{j}$ large enough. Then  there holds
\begin{theorem}[\cite{plamenevskii}]\label{app:theorem1}
 Let $\Lambda f \in RV_{\beta}(K\times \mathbb{R} ; \gamma)$ for $\beta \in (\beta_{k-j} , \beta_{k})$ with some $k=1,2,\ldots $. Then  the solution of \eqref{eq:dirplamenvskiiwave} has a representation
 \begin{equation}\label{eq:thm1representationofsol}
  u(y,t) = \sum_{j \in J} r^{i \lambda_{j}} U_{j}^{L_{j}}(r \partial_{t} , \omega) (X \check{c}_{j})(y,t) + \check{h}(y,t)
 \end{equation}
where $ \check{c}_{j}(t) = \mathcal{F}_{\tau \rightarrow t}^{-1}(\mathcal{F}_{t\rightarrow \tau} f(\cdot, t), w_{-j}(\cdot,\overline{\tau}))_{L_{2}(K)}$ and the remainder $\check{h}$ is subject to the estimate
\begin{equation}\label{eq:thm1remainderestimate}
 \gamma \lVert \check{h} \rVert_{DV_{\beta}(K\times\mathbb{R}, \gamma)} \leq c \lVert \Lambda f \rVert_{RV_{\beta}(K\times\mathbb{R}, \gamma)}
\end{equation}
with a constant $c$ independent of $\gamma> 0$.
\end{theorem}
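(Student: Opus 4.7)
My plan is to follow the Fourier/Mellin approach used in the Kondratiev theory of corner singularities for elliptic operators, combined with the parameter-dependent estimate \eqref{eq:energyestimate} which provides uniform control in $\tau = \sigma - i\gamma$. The idea is to reduce the hyperbolic problem to a one-parameter family of elliptic (resolvent) problems in the cone $K$, derive the singular expansion for each member of the family with uniform $\tau$-bounds, and then reassemble everything through Parseval's identity in the weighted spaces $DV_\beta$ and $RV_\beta$.

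First I would apply $\mathcal{F}_{t\to\tau}$ to the wave equation, yielding \eqref{eq:dirfourierplamenvskii}. The key observation is that for fixed $\tau$, this is the Helmholtz-type resolvent problem for $A(\tau)$, whose singularities at the vertex are governed by the operator pencil $\mathcal{A}_D(\lambda) = \lambda^2 + (n-2)\lambda - \Delta_S$. For each eigenvalue $\lambda_j$ in the strip $\beta_{k-j} < \mathrm{Im}\,\lambda < \beta_k$, there is a special homogeneous solution $w_{-j}(\cdot,\tau)$ with leading asymptotics $r^{i\lambda_j}\Phi_j(\omega)$, constructed via the cut-off procedure with $\zeta$ and the $M$-th partial sum $\tilde{w}_{-j}^M$. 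The coefficient of the corresponding singular term in $\hat{u}(\cdot,\tau)$ is obtained by pairing $\hat{f}(\cdot,\tau)$ with the dual eigenfunction $w_{-j}(\cdot,\bar\tau)$, giving $\hat{c}_j(\tau)=(\hat{f}(\cdot,\tau), w_{-j}(\cdot,\bar\tau))_{L^2(K)}$. This is precisely the Green's formula argument in the cone, justified by the pairing between $w_{-j}$ and solutions of the adjoint problem.

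Next I would prove the Kondratiev-type splitting
\begin{equation*}
\hat{u}(y,\tau) = \sum_{j\in J} r^{i\lambda_j}\,U_j^{L_j}(r\tau,\omega)\,\chi(|\tau| r)\,\hat{c}_j(\tau) + \hat{h}(y,\tau),
\end{equation*}
with the remainder $\hat{h}(\cdot,\tau) \in DH_\beta(K,|\tau|)$ satisfying $\gamma\,\|\hat{h}\|_{DH_\beta(K,|\tau|)} \lesssim \|\Lambda f\|_{RH_\beta(K,|\tau|)}$ \emph{uniformly in $\tau$}. This uniform bound is the heart of the argument and requires the scaling-invariant choice of cut-off $\chi(|\tau| |y|)$ together with the a priori estimate \eqref{eq:energyestimate}, so that the constant is independent of both $\tau$ and $\gamma$. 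One shifts the Kondratiev weight exponent across the strip between two consecutive eigenvalues; each eigenvalue crossed contributes one singular term, and the residual (after subtracting these) lies in a space with the new weight.

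Finally I would apply the inverse Fourier transform $\mathcal{F}^{-1}_{\tau\to t}$ along the contour $\mathrm{Im}\,\tau=-\gamma$, identifying $X\check{c}_j$ with the contour integral that appears in \eqref{eq:thm1representationofsol}, and $\check{h}$ with $\mathcal{F}^{-1}\hat{h}$. Squaring the $\tau$-wise inequality and integrating over the contour, Parseval yields the claimed bound \eqref{eq:thm1remainderestimate} in $DV_\beta(K\times\mathbb{R};\gamma)$ and $RV_\beta(K\times\mathbb{R};\gamma)$. The main obstacle—and the reason this is a genuine theorem rather than a cosmetic rewriting of the elliptic case—is producing the $\tau$-uniform Kondratiev splitting: one must track how the resolvent estimate interacts with the cut-off at scale $|\tau|^{-1}$ and verify that the norms of the corrector functions (those subtracted to produce $\tilde{w}_{-j}^M$) admit the same uniformity. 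This is exactly where the weighted norms $H^s_\beta(K,|\tau|)$, built into the definitions of $DH_\beta$ and $RH_\beta$, are engineered to absorb the $\tau$-dependence, which is why the final estimate carries no hidden loss in $\gamma$ beyond the single factor shown.
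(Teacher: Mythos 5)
You should first note that the paper itself gives no proof of this statement: Theorem \ref{app:theorem1} is quoted verbatim from \cite{plamenevskii}, and the Appendix only sketches the machinery behind it (the special solutions $w_{-j}$, the scaled weighted spaces, and, via Proposition \ref{app:prop1}, the parameter-uniform a priori estimates). Your outline — Fourier--Laplace transform in $t$, a Kondratiev-type splitting of $\hat u(\cdot,\tau)$ with coefficients $(\hat f(\cdot,\tau),w_{-j}(\cdot,\overline\tau))_{L_2(K)}$ uniform in $\tau$, then Parseval along $\operatorname{Im}\tau=-\gamma$ — is exactly the route of the cited proof, and the coefficient formula and the role of the cut-off at scale $|\tau|^{-1}$ are correctly identified.

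The gap is in the step you yourself call the heart of the argument. You claim the $\tau$-uniform splitting follows from the energy estimate \eqref{eq:energyestimate} together with the scaling-invariant cut-off $\chi(|\tau| r)$. That is not enough: \eqref{eq:energyestimate} gives only first-order, unweighted control, while the remainder must be bounded in $DH_{\beta}(K,|\tau|)$, i.e.\ with weighted second-order control near the vertex and control on the whole unbounded cone. The actual mechanism (sketched in the Appendix after Theorem \ref{app:theorem2}) is the \emph{combined} a priori estimate \eqref{eq:propapriori} of Proposition \ref{app:prop1}, assembled from three distinct estimates in three zones after the rescaling $\eta=|\tau|x$: Kondratiev's weighted elliptic estimate \eqref{eq:propaposterioriestimate} near the vertex, the weighted hyperbolic-type estimate \eqref{propproofhelp2} far from the vertex, and the global energy estimate \eqref{eq:energypropestimate} to glue the intermediate zone. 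Without the far-zone hyperbolic estimate in particular, your splitting cannot be closed in $DH_\beta(K,|\tau|)$ over the unbounded cone, and the constant's independence of $\gamma$ (the single factor $\gamma$ on the left of \eqref{eq:thm1remainderestimate}) is precisely what this three-zone bookkeeping delivers; asserting that the weighted norms ``absorb the $\tau$-dependence'' does not substitute for it.
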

We now follow again \cite{kokotov3} and  seek a solution of \eqref{eq:dirfourierplamenvskii} with $g\equiv 0$ and $n=3, d=0$ such that $ u \sim r^{i \lambda_{-k}} \Phi_{k}(\omega)$ as $r\rightarrow 0$ and $u \in L_{2}(K\backslash B_{\epsilon})$ where $B_{\epsilon} = \lbrace  x \in \mathbb{R}^{3}: |x| < \epsilon \rbrace $. First set
\begin{equation*}
  u(r,\omega,\tau) = r^{i \lambda_{-k}} \rho(r \tau) \Phi_{k}(\omega)
\end{equation*}
with $\rho$ to be defined later. Using
\begin{equation*}
 \lbrace (r\partial_{r})^{2} + r \partial_{r} - \Delta_{S} \rbrace r^{i \lambda_{-k}} \Phi_{k}(\omega) = 0,
\end{equation*}
with the Laplace operator $\Delta_{S}$ on $S^{2}$ we have 
\begin{equation*}
 (\tau r)^{2} \rho(r \tau) + (2 i \lambda_{-k} + 1)(r\tau) \rho'(r\tau) + (r \tau)^{2} \rho''(r\tau) = 0 .
\end{equation*}
Denoting $x=r \tau$, $\rho(x) = x^{\nu}\xi(x)$, where $\nu=(2-2i \lambda_{-k}-3)/2$ and $\Theta(y):= \xi(-iy)$
we get the Bessel equation
\begin{equation*}
 y^{2} \Theta''(y) + y \Theta'(y) - (\nu^{2}+y^{2})\Theta(y) = 0
\end{equation*}
and take as $\Theta$ the modified Bessel function $K_{\nu}$ of third kind yielding
\begin{equation*}
 \rho(t \tau) = c \  (r \tau)^{\nu}  K_{\nu}(i r \tau),
\end{equation*}
where the constant $c$ is given (via the condition $\rho(0) = 1$) as 
\begin{equation*}
  c = \pi^{-1}  \sin(\pi \nu) \Gamma(1-\nu) i^{\nu} 2^{1-\nu}.
\end{equation*}
Note that $u \in L_{2}(K \backslash B_{\epsilon})$ due to
\begin{equation}\label{eq:asymptotics}
 K_{\nu}(z)= \sqrt{\frac{\pi}{2 z}} e^{-z} \big[ \sum_{m=0}^{M-1} c(\nu,m) (2 z)^{-m} + O(|z|^{-M})\big] , |z| \rightarrow \infty \ .
\end{equation}
Further note that $e^{-i r \tau}$ decreases rapidly as $r\rightarrow + \infty$, $\tau = \sigma -i \gamma$, $\gamma> 0$. Hence 
\begin{equation}\label{eq:expressionofomega}
 w_{-k}(x,\tau) = r^{i \lambda_{-k}} \Phi_{k}(\omega) \frac{2^{1-\nu}}{\Gamma(\nu)} (i r \tau)^{\nu} K_{\nu}(i \tau r) , \ 2 \nu = \sqrt{1+4 \mu_{k}} .
\end{equation}
Applying the inverse Fourier transform gives
\begin{align}\label{eq:formofsolution}
 W_{-k}(x,t) &\!:=\!\! \mathcal{F}_{\tau \rightarrow t}^{-1} w_{-k}(x,\tau)\! =\!\! \frac{2^{1-N}}{\Gamma(\nu)\Gamma(\mu+1/2)} r^{\nu-\mu+i \lambda_{-k}} \Phi_{k}(\omega) (\frac{\partial}{\partial t})^{N} I_{N}(r,t,\mu,\nu) \nonumber \\ & =: (\frac{\partial}{\partial t})^{N} P_{N,k}(x,t),
\end{align}
where $r=|x|$, $N=[\nu]+m$, $\mu:=[\nu]-\nu+m$, $m>0$ arbitrary integer,
\begin{equation}\label{eq:defofIN}
 I_{N}(r,t,\mu,\nu) = \begin{cases} 
                          0 , & t < r \\
                          \pi^{1/2} (t^{2}-r^{2})^{\frac{2 \mu-1}{2}} F(\frac{\mu-\nu}{2},\frac{\mu+\nu}{2},\mu+1/2,1-\frac{t^{2}}{r^{2}}) , & t > r,
 \end{cases}
\end{equation}
and $F(a,b,c,z)$ is the hypergeometric function.

\begin{remark}
 From \eqref{eq:defofIN} follows that $W_{-k}(x,t) = 0$ if $ t < |x|$ and $\mathrm{sing}$ $ \mathrm{supp} W_{-k} \subset \lbrace (x,t) : |x|=t \rbrace $. The function $P_{N,k}(x,t)$ satisfies the homogeneous wave equation in $K \times \mathbb{R}_{+}$.
\end{remark}
Next, we look for a solution $u$ (of the homogeneous problem \eqref{eq:dirfourierplamenvskii}) with asymptotics
$r^{i \lambda_{k}} \Phi_{k}(\omega)$ as $x \rightarrow 0$, that is $u(x,\tau)=r^{i \lambda_{k}} \Phi_{k}(\omega) \rho(r \tau)$ with $\rho(0)=1$. Similarly to above one obtains 
\begin{equation*}
 u(x,t) = 2^{\nu} \Gamma(1+\nu) (i r \tau)^{-\nu} I_{\nu}(i r \tau) r^{i \lambda_{k}} \Phi_{k}(\omega),
\end{equation*}
where $ 2 \nu = \sqrt{1+ 4 \mu_{k}}$ and $I_{\nu}$ is the modified Bessel function \eqref{eq:defofIN}. Thus $w_{k}= u$ is 
\begin{equation*}
 w_{k}(x,\tau) = 2^{\nu} \Gamma(1+\nu) r^{i \lambda_{k}} \Phi_{k}(\omega) \sum_{m=0}^{\infty} \frac{(ir t)^{2m}}{m! \Gamma(m+\nu+1)}.
\end{equation*}
Next we consider the Dirichlet problem \eqref{eq:dirplamenvskiiwave} with inhomogeneous initial conditions, 
\begin{equation}\label{eq:nonhominitcond}
 u(0,x) = \phi(x) , \ \ \partial_{t} u(0,x) = \psi(x) .
\end{equation}
\begin{theorem}[\cite{kokotov3}]\label{app:theorem2}
   Let $k \in \mathbb{N}$, $\beta \in (\beta_{k+1},\beta_{k})$, $\gamma>0$ and $2 \nu_{j}=\sqrt{1+4\mu_{j}}$, $N_{j}=[\nu_{j}]+m$, $m\geq 4 $ integer. Let $N_{j}$ even, $N_{j}=2 l_{j}$. Set
   \begin{equation}\label{thmeq:formofcheckc}
     \check{c}_{j}(t) = \int_{K} \Delta^{l_{j}} \psi(y) P_{N_{j},j}(y,t) dy + \int_{K} \Delta^{l_{j}} \phi(y) \partial_{t} P_{N_{j},j}(y,t) dy.
   \end{equation}
with $P_{N,k}$ as in \eqref{eq:formofsolution}. Then there holds for the solution of \eqref{eq:dirplamenvskiiwave} satisfying \eqref{eq:nonhominitcond}
\begin{equation}\label{app:thm2aussage}
 u(x,t) = \chi(r) \sum_{j \in J} 2^{\nu_{j}} \Gamma(1+\nu_{j}) \lbrace \sum_{m=0}^{L_{j}} \frac{(r \partial_{t})^{2m} \check{c}_{j}(t)}{m! \Gamma(m+\nu_{j}+1)} \rbrace \Phi_{j}(\omega) r^{i\lambda_{j}} + \tilde{\rho}(x,t) .
\end{equation}
Here $\chi$ is a cut-off function with $\chi \equiv 1$ near $0$, $L_{j}$ are sufficiently large integers. $J:=\lbrace j: \Im \lambda_{j} \geq \beta_{k} -1/2 \rbrace$ and $\tilde{\rho}$ satisfies
\begin{equation*}
 \lVert \tilde{\rho} \rVert_{DV_{\beta}(K \times \mathbb{R},\gamma)} \leq c(\gamma)
\end{equation*}
\end{theorem}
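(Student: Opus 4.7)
The plan is to derive Theorem \ref{app:theorem2} from Theorem \ref{app:theorem1} by reducing the inhomogeneous Cauchy data to a distributional right-hand side, and then by an explicit calculation of the coefficient $\check{c}_j$ in time. Extending $u$ by zero for $t<0$, the initial conditions \eqref{eq:nonhominitcond} are absorbed into the source, so that as distributions on $\mathbb{R}_t\times K_x$
\begin{equation*}
\partial_t^2 u - \Delta_x u \;=\; \delta(t)\otimes\psi(x) + \delta'(t)\otimes\phi(x) \;=:\; f ,\qquad u|_{\partial K\times\mathbb{R}}=0 .
\end{equation*}
With the Fourier convention of the appendix one has $\hat f(\tau,x)=\psi(x)-i\tau\,\phi(x)$. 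The first preliminary step is to verify that $\Lambda f\in RV_\beta(K\times\mathbb{R};\gamma)$ for $\beta\in(\beta_{k+1},\beta_k)$; by Plancherel this reduces to a weighted $L^2$ estimate in $\tau$ for $\psi-i\tau\phi$, which is where implicit smoothness hypotheses on $\phi,\psi$ enter. Theorem \ref{app:theorem1} then yields the expansion \eqref{eq:thm1representationofsol}, and the remaining task is to identify the objects appearing there with those in \eqref{app:thm2aussage}.

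The heart of the argument is the identification of the coefficient
\begin{equation*}
\check{c}_j(t) \;=\; \mathcal{F}^{-1}_{\tau\to t}\bigl(\hat f(\tau,\cdot),\,w_{-j}(\cdot,\bar\tau)\bigr)_{L^2(K)} \;=\; \int_K\psi(y)\,W_{-j}(y,t)\,dy \;+\; \int_K\phi(y)\,\partial_t W_{-j}(y,t)\,dy ,
\end{equation*}
where $W_{-j}=\mathcal{F}^{-1}_{\tau\to t}w_{-j}$. Plugging in the explicit representation $W_{-j}=\partial_t^{N_j}P_{N_j,j}$ from \eqref{eq:formofsolution}, with $N_j=2l_j$, and using that $P_{N_j,j}$ satisfies the homogeneous wave equation on $K\times\mathbb{R}_+$ (remark after \eqref{eq:defofIN}), the time derivatives on the kernel convert into a spatial Laplacian: $\partial_t^{2l_j}P_{N_j,j}=\Delta_y^{l_j}P_{N_j,j}$ in the first integral, and $\partial_t^{2l_j+1}P_{N_j,j}=\Delta_y^{l_j}\partial_tP_{N_j,j}$ in the second. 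Integrating by parts $l_j$ times in $y$ then transfers $\Delta^{l_j}$ onto $\psi$ and $\phi$, giving exactly the formula \eqref{thmeq:formofcheckc}. The boundary contributions vanish because the angular factor $\Phi_j|_{\partial\Omega}=0$ is inherited from the pencil eigenproblem, and because $\phi,\psi$ carry homogeneous Dirichlet data from the original problem. Finally, the inner bracketed sum in \eqref{app:thm2aussage} is matched to $U_j^{L_j}(r\partial_t,\omega)X\check{c}_j$ from \eqref{eq:thm1representationofsol} by reading off the coefficients of the explicit power series for $w_j$ derived earlier in the appendix; the cut-off $\chi(r)$ absorbs $X$ near the vertex, and the bound on $\tilde\rho$ is inherited verbatim from \eqref{eq:thm1remainderestimate}.

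I expect the main obstacle to be the rigorous justification of the $l_j$-fold integration by parts that transfers $\Delta^{l_j}$ from the kernel onto the initial data. The kernel $P_{N_j,j}(y,t)$ contains the radial factor $|y|^{\nu_j-\mu_j+i\lambda_{-j}}$, which is singular at the cone vertex, and by \eqref{eq:defofIN} has only finite regularity across the characteristic light cone $\{|y|=t\}$ inherited from the hypergeometric function. I would handle this by first working on shells that exclude a small neighborhood of $\{y=0\}$ and a thin tube around $\{|y|=t\}$, performing the integrations by parts where everything is smooth, and then passing to the limit using \eqref{eq:thm1remainderestimate} to absorb the excised contributions into $\tilde\rho$. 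The assumption $m\geq 4$ (so $N_j\geq[\nu_j]+4$) in the statement is precisely what provides enough regularity of $P_{N_j,j}$ for this manipulation to be legitimate; a secondary subtlety is to track carefully how the weighted estimate on $\Lambda f$ survives the identification, which is why the compatible range $\beta\in(\beta_{k+1},\beta_k)$ in Theorem \ref{app:theorem1} must be matched to the spectral data of the pencil at each step.
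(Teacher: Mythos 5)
Your reduction hinges on applying Theorem \ref{app:theorem1} to the distributional source $f=\delta(t)\otimes\psi+\delta'(t)\otimes\phi$, but this source does not satisfy the hypothesis of that theorem. One needs $\Lambda f\in RV_{\beta}(K\times\mathbb{R};\gamma)$, i.e. $\int_{\Im\tau=-\gamma}\lVert \mathcal{F}(\Lambda f)(\cdot,\tau)\rVert^{2}_{RH_{\beta}(K,|\tau|)}\,d\tau<\infty$; with $\hat f(\tau,\cdot)=\psi-i\tau\phi$ the integrand grows polynomially in $|\tau|$ and the $\tau$-integral diverges, no matter how smooth $\phi,\psi$ are in $x$ — the obstruction is lack of decay in the dual time variable, not lack of spatial regularity, so your remark that ``implicit smoothness hypotheses on $\phi,\psi$ enter'' does not close this. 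The paper avoids exactly this problem by a different device: it constructs an auxiliary $w\in C_{0}^{\infty}(K\times\mathbb{R})$ whose full Taylor expansion at $t=0$ is prescribed by the data through the wave equation, $\partial_{t}^{2n}w(\cdot,0)=\Delta^{n}\phi$, $\partial_{t}^{2n+1}w(\cdot,0)=\Delta^{n}\psi$, so that $v=u-w$ solves an inhomogeneous problem with zero Cauchy data whose right-hand side $g_{+}=\Theta_{+}(-(\partial_{t}^{2}-\Delta)w)$ vanishes to infinite order at $t=0$ and is smooth and compactly supported; Theorem \ref{app:theorem1} is then legitimately applied to $g_{+}$, and a Paley--Wiener/a priori estimate argument (Proposition \ref{app:prop1}) identifies the cylinder solution with the causal one. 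Since $w$ is supported away from the vertex, $u$ and $v$ have the same asymptotics there.

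The identification of $\check{c}_{j}$ is also carried out differently, and your variant has unresolved difficulties. You convert $\partial_{t}^{2l_{j}}P_{N_{j},j}=\Delta_{y}^{l_{j}}P_{N_{j},j}$ and integrate by parts $l_{j}$ times in $y$ against the singular kernel, proposing to excise the vertex and the light cone and ``absorb'' the excised pieces into $\tilde\rho$ via \eqref{eq:thm1remainderestimate}; but that estimate bounds the remainder $\check h$ of Theorem \ref{app:theorem1} and gives no mechanism for controlling boundary contributions of an integration by parts across the characteristic set where $P_{N_{j},j}$ has only finite regularity. The paper instead integrates by parts $N_{j}$ times in the \emph{time} variable $s$ in $\check c_{j}(t)=\int_{K}\int_{0}^{\infty}g(y,s)\,\partial_{s}^{N_{j}}P_{N_{j},j}(y,t-s)\,ds\,dy$, where all boundary terms sit at $s=0$ and are evaluated by the prescribed Taylor data of $w$, yielding \eqref{thmeq:formofcheckc} directly (your formula $\check c_{j}=\int\psi W_{-j}+\int\phi\,\partial_{t}W_{-j}$ is only recovered in the paper as a remark valid for large $t$). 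Finally, the replacement of $X\check c_{j}$ by $\chi(r)\check c_{j}$ is not automatic: it requires the rapid decay \eqref{thm2prooftool7} of $c_{j}(\tau)$, which in the paper is deduced from $g_{+}\in C_{0}^{\infty}$ together with the explicit form \eqref{eq:expressionofomega} and the Bessel asymptotics \eqref{eq:asymptotics}; in your setting this step would have to be reproved for the distributional source and is currently missing.
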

\begin{remark}
 Analogous results for the Neumann problem of the wave equation are derived in \cite{kokotov}.
\end{remark}
\begin{remark}
 Since $\mathrm{supp} P_{N,j}\subset \lbrace (t,y) : t \geq |y| \rbrace$, we have $\check{c}_{j}(t) = 0$ if $t< \inf\lbrace |x|: x \in \mathrm{supp} \psi \cup \mathrm{supp} \phi \rbrace$.
 If $t>\sup\lbrace |x|: x \in \mathrm{supp} \psi \cup \mathrm{supp} \phi \rbrace$ then integration by parts in \eqref{thmeq:formofcheckc} gives
 \begin{equation*}
  \check{c}_{j}(\cdot) = \int_{K} \psi(y) W_{-j}(y,t) dy + \int_{K} \phi(y) \partial_{t} W_{-j}(y,t) dy.
 \end{equation*}
\end{remark}
Now, let us first consider problem \eqref{eq:dirplamenvskiiwave} in the infinite cylinder $K\times \mathbb{R}$. With the assumption of Theorem \ref{app:theorem1} for the right hand side $f$ the coefficient $\check{c}_{j}(\cdot)$ in \eqref{eq:thm1representationofsol} belongs to the Sobolev space $H^{3/2-\Im \lambda_{j}-\beta}(\mathbb{R})$. On the other hand
\begin{equation*}
 \check{c}_{j}(\cdot) = \int f(x,s) W_{-j}(x,\cdot-s) dx  ds 
\end{equation*}
belongs to the class $C^{\infty}(\alpha,+\infty)$ for any $\alpha > \sup\lbrace |x|+s: (x,s) \in \mathrm{sing} \ \mathrm{ supp} f \rbrace$. If $t < \inf\lbrace |x|+s: (x,s) \in \mathrm{supp} f \rbrace$, then we have $ \check{c}_{j}(t)=0$.
\begin{proof} of Theorem \ref{app:theorem2} \\
 Let $u$ solve \eqref{eq:dirplamenvskiiwave} with \eqref{eq:nonhominitcond} and consider 
 \begin{align}
   & w \in C_{0}^{\infty}(K \times \mathbb{R}) \label{thm2prooftool1}  \\
   w(x,0)=\phi(x) , \ \ & \frac{\partial w}{\partial t}(x,0) = \psi(x)  \label{thm2prooftool2},\\
   ( \partial_t^{2}w - \Delta_{x} w)_{+} &= \Theta_{+} (\partial_t^{2}w - \Delta_{x} w) ,  \label{thm2prooftool3} 
 \end{align}
 with  the characteristic function $\Theta_{+}$ of the semi axis  $\lbrace t: t \geq 0 \rbrace $.
Note that \eqref{thm2prooftool2}, \eqref{thm2prooftool3} are equivalent to
\begin{equation*}
 \partial_{t}^{2n+1} w(x,0) = \Delta_{x}^{n} \psi(x), \  \partial_{t}^{2n} w(x,0) = \Delta_{x}^{n} \phi(x) , \ n=0,1,2, \ldots
\end{equation*}
Note further that $v:=u-w$ satisfies
\begin{align}\label{thm2prooftool4}
 \partial_{t}^{2}v-\Delta_{x} v = - (\partial_{t}^{2}w-\Delta_{x}w) =: g \text{ in } K \times (0,\infty) \nonumber \\
 v|_{\partial K \times (0,\infty)} = 0 \\
 v|_{t=0} = 0 , (\partial_{t} v)|_{t=0} = 0 \nonumber
\end{align}
Consider in the infinite cylinder $K \times \mathbb{R}$:
\begin{align}\label{thm2prooftool5}
 \partial_{t}^{2}v-\Delta_{x}v = - (\partial_{t}^{2}w-\Delta_{x}w)_{+} \text{ in } K \times (0,\infty) \nonumber \\
 v|_{\partial K \times (0,\infty)} = 0
\end{align}
% \underline{Step A}: 
% \ \\
First by applying a priori estimates in weighted spaces \eqref{eq:propapriori} from Proposition \ref{app:prop1} and the Paley-Wiener theorem, we deduce that $v$ %(sol of \eqref{thm2prooftool5}) 
is smooth in $t$, $v\equiv 0$ for $t<0$. That means $v$ coincides for $t>0$ with the solution of \eqref{thm2prooftool4}.

%\underline{Step B}: 
\ \\
Next, we observe that $ (\mathrm{supp} w)|_{\mathbb{R}_{\times}^{n}} \cap 0 = \emptyset$ where $0$ is the vertex of $K$. Therefore the asymptotics of $u$ and $v$ near $0$ coincide. Let $g:= - (\partial_{t}^{2}-\Delta_{x}) w$ and $g_{+} = \Theta_{+} g$. According to Theorem \ref{app:theorem1} there holds
\begin{equation}\label{eq:proofstepbthm3}
 v(x,t) = \sum_{j\in J} r^{i \lambda_{j}} U_{j}^{L_{j}}(r \partial_{t}), \omega) (X \check{c}_{j})(x,t) + \check{h}(x,t)
\end{equation}
where $\check{c}_{j}(t) = \mathcal{F}_{\tau \rightarrow t}^{-1} (\hat{g}_{+}(\cdot,\tau), w_{-j}(\cdot,\overline{\tau}))_{L_{2}(K)}$ with $\check{h}$ satisfying \eqref{eq:thm1remainderestimate} where $f$ is replaced by $g_{+}$. 

\ \\
Next, let us express $\check{c}_{j}(t)$ in terms of data of the homogeneous problem \eqref{eq:dirplamenvskiiwave} with inhomogeneous initial conditions \eqref{eq:nonhominitcond}. We have
\begin{align*}
 \check{c}_{j}(t) &= \int\limits_{K} \!\! dy \int\limits_{-\infty}^{+\infty} g_{+}(y,s) W_{-j}(y,t\!-\!s) ds = \int\limits_{K} \!\! dy \int\limits_{0}^{\infty} g(y,s) (\frac{\partial}{\partial s})^{N} P_{N_{j},j}(y,t\!-\!s) ds \\
 &= (-1)^{N_{j}} \int_{K} \!\! dy \int_{0}^{\infty} \partial_{s}^{N_{j}} g(y,s) P_{N_{j},j}(y,t-s) ds \\
 &= (-1)^{N_{j}} \int_{K} dy \int_{0}^{\infty} - (\partial_{s}^{2}-\Delta_{y}) \partial_{s}^{N_{j}} w(y,s) P_{N_{j},j}(y,t-s) ds \\
 &= (-1)^{N_{j}}  \int\limits_{K} \!\! dy \lbrace \partial_{s}^{N_{j}+1}w(y,0) P_{N_{j},j}(y,t)\!-\!\!\int\limits_{0}^{\infty} \! \partial_{s}^{N_{j}+1} w(y,s) \partial_{s} P_{N_{j},j}(y,t\!-\!s) ds  \\ &+ \int\limits_{0}^{\infty} \Delta_{y} \partial_{s}^{N_{j}} w(y,s) P_{N_{j},j}(y,t-s) ds \rbrace \\
 &= (-1)^{N_{j}} \int\limits_{K} dy \lbrace \partial_{s}^{N_{j}+1}w(y,0) P_{N_{j},j}(y,t)+ \partial_{s}^{N_{j}}w(y,0) \partial_{s}P_{N_{j},j}(y,t)\\ &+\int\limits_{0}^{\infty} \partial_{s}^{N_{j}} w(y,s) \partial_{s}^{2} P_{N_{j},j}(y,t-s) ds + \int\limits_{0}^{\infty} \partial_{s}^{N_{j}} w(y,s) \Delta_{y} P_{N_{j},j}(y,t-s) ds \rbrace
\end{align*}
Note that $ (\partial_{s}^{2}-\Delta_{y}) P_{N_{j},j} = 0 $.
Hence
\begin{equation*}
 \check{c}_{j}(t) = (-1)^{N_{j}} (\int_{K} \partial_{t}^{N_{j}+1} w(y,0) P_{N_{j},j}(y,t) dy + \int_{K} \partial_{t}^{N_{j}} w(y,0) \partial_{t} P_{N_{j},j}(y,t) dy )
\end{equation*}
Setting $N_{j}=2 l_{j}$ gives \eqref{thmeq:formofcheckc}.

\ \\
It remains to show that one can omit the operator $X$. One observes 
\begin{equation}\label{thm2prooftool6}
 X \check{c}_{j}(x,t) - \chi(r) \check{c}_{j}(t) = \int_{\Im \tau = - \gamma} e^{i t \tau} (\chi(|\tau|r)-\chi(r)) c_{j}(\tau) d\tau
\end{equation}
where $c_{j}(\tau) = (\hat{g}_{+}(\cdot,\tau),w_{-j}(\cdot,\overline{\tau}))_{L_{2}(K)}$. Using the inclusion $g_{+} \in C_{0}^{\infty}(K\times \mathbb{R})$, the explicit expression \eqref{eq:expressionofomega} for $w_{-j}(x,\tau)$ and the asymptotics \eqref{eq:asymptotics}, one gets
\begin{equation}\label{thm2prooftool7}
 |(\frac{d}{d \tau})^{k} c_{j}(\tau)| \leq c(\gamma,k,N) |\tau|^{-N} \ \ , \ \forall k, N \geq 0
\end{equation}
It follows from \eqref{thm2prooftool6} and \eqref{thm2prooftool7} that
\begin{equation*}
 \kappa(x,t) := \sum_{j \in J} r^{i \lambda_{j}} U_{j}^{L_{j}}(r \partial_t,\omega) (X \hat c_{j}(x,t)) - \chi(r) \sum_{j \in J} r^{i \lambda_{j}} U_{j}^{L_{j}}(r \partial_t, \omega) \check{c}_{j}(t)
\end{equation*}
belongs to $V_{\beta'}^{s}(K \times \mathbb{R},\gamma)$ for any $s \in \mathbb{N}_{0}$, $\beta' \in \mathbb{R}$. Hence $\lVert \kappa \rVert_{DV_{\beta}(K \times \mathbb{R},\gamma)} $ is finite. Setting $\tilde{\rho}(x,t) = \kappa(x,t) + \check{h}(x,t)$ completes the proof of Theorem \ref{app:theorem2}.
\end{proof}
% Let $K$ be an open cone $\mathbb{R}^{n-d}$ with vertex at the origin. Let $0\leq d \leq n-2$ and let $\mathcal{K} = K \times \mathbb{R}^{d} = \lbrace x=(y,z) \in \mathbb{R}^{n}; y \in K, z \in \mathbb{R}^{d} \rbrace$ be a wedge in $\mathbb{R}^{n}$. In the cylinder 
% \begin{equation*}
%  Q = \lbrace (x,t) : x \in K , x \in K, - \infty < t < + \infty \rbrace
% \end{equation*}
% we consider the problem
% \begin{flalign}\label{eq:Boxproblem}
%  \Box u(x,t) = (\partial_{t}^{2}-\Delta_{x}) u(x,t) = f(x,t) , \ (x,t) \in Q ¸\nonumber \\
%    u(x,t) = 0 , \ (x,t) \in \partial Q
% \end{flalign}

To justify the asymptotic formula \eqref{app:thm2aussage} for the solution %near the edge of the wedge $\mathcal{K}$, the authors 
Kokotov and Plamenevskii study in \cite{kokotov2} the solvability of \eqref{eq:dirplamenvskiiwave} (with Neumann conditions) in a scale of weighted Sobolev spaces. The method is based on " combined " estimates for the solution as follows. The operator $(-\Delta_{x}-\tau^{2}) $ in \eqref{eq:dirfourierplamenvskii}
%Let $L(D_{y},D_{z},D_{t}) := $ and apply Fourier transform $\mathcal{F}_{(x,t)\rightarrow (\rho,\tau)}$ to \eqref{eq:Boxproblem} where $\rho \in \mathbb{R}^{d}$, 
$\tau=\sigma-i \gamma$, $\sigma \in \mathbb{R}$, $\gamma >0$, is elliptic for fixed parameter %$\rho $ and 
$\tau$, but hyperbolic in $\tau$. Now one has to estimate the solution uniformly with respect to the parameter. Therefore the cone $K$ is divided into various zones:
\begin{itemize}
 \item near the vertex %in $\lbrace y \in K: |y| > c_{0} p^{-1} , p=(|\rho|^{2}+|\tau|^{2})^{1/2}\rbrace$ 
 where one uses the weighted elliptic estimate \eqref{eq:propaposterioriestimate},
 \item far from the vertex %for $|y|>c_{1} p^{-1}$ 
 where one uses the weighted hyperbolic estimate \eqref{propproofhelp2},
 \item in the intermediate zone where one uses the weak global estimate \eqref{eq:energypropestimate}, which holds in the entire cone and follows from
 \begin{equation*}
  \gamma^{2} \int_{Q} e^{-2 \gamma t} |\nabla_{x,t} u(x,t)|^{2} dx dt \leq c \int_{Q} e^{-2 \gamma t} | (\partial_{t}^{2}-\Delta_{x}) u(x,t)|^{2} dx dt
 \end{equation*}
%for \eqref{eq:Boxproblem}. 

In this way Kokotov and Plamenevskii \cite{kokotov4} obtain a combined a priori estimate for the solution in a scale of weighted spaces ($n=3,d=0$).%such as \eqref{eq:propapriori}.
\end{itemize}

\begin{proposition}[\cite{kokotov4}, Proposition 2.8]\label{app:prop1}
 Let $\beta \leq 1$, $\beta \neq 1-\frac{1}{2} \sqrt{1+ 4 \mu_{k}}$ and $q \in \mathbb{N}$. The solution $u$ of \eqref{eq:dirfourierplamenvskii} satisfies the a priori estimate
 \begin{flalign}\label{eq:propapriori}
  \lVert \chi(\tau) u \rVert_{H_{\beta+q}^{2+q}(K,|\tau|)}^{2} + \gamma^{2} \lVert u \rVert_{H_{\beta+q}^{1+q}(K,|\tau|)}^{2} \leq c \left(\sum_{j=0}^{q} \Big(\frac{|\tau|}{\gamma}\Big)^{2 j} \lVert g \rVert_{H_{\beta+q-j}^{q-j}(K,|\tau|)}^{2}   + \Big(\frac{|\tau|^{1-\beta+q}}{\gamma^{1+q}}\Big)^{2} \lVert g \rVert_{L_{2}(K)}^{2} \right) %\nonumber
 \end{flalign}
with a constant $c$ which is independent of $\tau=\sigma-i \gamma , \gamma> 0$.
\end{proposition}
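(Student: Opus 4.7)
The plan is to prove this parameter-dependent weighted estimate by partitioning the cone $K$ into three zones whose geometry depends on $|\tau|$: an inner zone $K_{\text{in}} = K \cap \{|y|\lesssim 1/|\tau|\}$ near the vertex, an outer zone $K_{\text{out}} = K\cap\{|y|\gtrsim 1/|\tau|\}$ far from the vertex, and an intermediate transition zone, and to treat each zone with the tool appropriate to it. A smooth partition of unity $\zeta_{\text{in}}+\zeta_{\text{mid}}+\zeta_{\text{out}}\equiv 1$ subordinate to this decomposition, with $|D^{\alpha}\zeta_\bullet|\lesssim |\tau|^{|\alpha|}$, reduces the global estimate to local ones in each zone; the commutators $[-\Delta-\tau^{2},\zeta_\bullet]$ produce only lower-order terms of the correct weight.

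In $K_{\text{in}}$, the $\tau^{2}$ term is dominated by the Laplacian because $|\tau|^{2}|y|^{2}\lesssim 1$, so the problem is essentially elliptic. I would rescale $y\mapsto y/|\tau|$: under this dilation the weighted norms $H^{s}_{\beta+q}(K,|\tau|)$ are homogeneous, and the equation becomes $(-\Delta-e^{2i\arg\tau})\tilde u=\tilde g$ on a fixed neighborhood of the vertex, where the scaling-invariant Kondratiev-type a posteriori elliptic estimate \eqref{eq:propaposterioriestimate} applies. Its availability is ensured precisely by the spectral condition $\beta\neq 1-\tfrac12\sqrt{1+4\mu_{k}}$, which keeps the Mellin symbol of the Dirichlet Laplacian invertible on the line $\Re z=1-\beta-q$. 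Un-rescaling returns the inner-zone contribution in exactly the form required on the right-hand side.

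In $K_{\text{out}}$, I would cover by balls $B_{\ell}$ of radius comparable to $1/|\tau|$, on each of which the weight $|y|^{2(\beta+q-s+|\alpha|)}$ is essentially constant. Local parameter-dependent elliptic regularity for the Helmholtz operator $-\Delta-\tau^{2}$ on each such ball, combined with the global hyperbolic-type energy estimate \eqref{eq:energyestimate} and its higher-order counterpart \eqref{propproofhelp2} in the parameter-dependent norm, controls the $H^{s}(B_{\ell},|\tau|)$-norm of $u$ by the $H^{s-2}$-norm of $g$ plus an $L^{2}$-tail. Summation over the dyadic cover, where the weight converts into the factor $(|\tau|^{1-\beta+q}/\gamma^{1+q})^{2}$ in front of $\|g\|_{L^{2}(K)}^{2}$, yields the desired outer-zone bound. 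The intermediate zone sits on a shell of radius $\sim 1/|\tau|$ and is handled by either of the neighboring estimates together with the weak global energy inequality \eqref{eq:energypropestimate}.

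The main obstacle will be to track the powers of $|\tau|$ and $\gamma$ uniformly across the three zones so that the constant $c$ in the final estimate is independent of $\tau$ on $\Im\tau=-\gamma$; in particular, the different weight conventions in $H^{s}_{\beta}(K,|\tau|)$ must be carefully reconciled with the rescalings at the interfaces between the zones. The extension from $q=0$ to general $q\geq 1$ is then by commuting the operator with spatial derivatives: each commutator produces lower-order terms that are absorbed into the sum $\sum_{j=0}^{q}(|\tau|/\gamma)^{2j}\|g\|^{2}_{H^{q-j}_{\beta+q-j}(K,|\tau|)}$ on the right, so an induction on $q$, combined with interpolation in the intermediate zone and the Fredholm alternative on the Mellin side near the vertex, completes the argument.
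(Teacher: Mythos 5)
Your plan coincides with the paper's own (sketched) argument: both rescale by $|\tau|$, split the cone into a vertex zone treated by the Kondratiev-type weighted elliptic estimate \eqref{eq:propaposterioriestimate} under the spectral condition on the pencil, a far zone treated by the parameter-dependent hyperbolic estimate \eqref{propproofhelp2}, and an intermediate zone handled by the weak global energy inequality \eqref{eq:energypropestimate}, and then induct on $q$ (the paper takes \cite[Proposition 1.3]{kokotov4}, i.e.\ \eqref{propproofhelp1}, as the base case, which your zone argument essentially reconstructs). So your proposal is correct at the same level of detail as the paper's sketch and follows essentially the same route.
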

In the following we sketch the proof of the above proposition (see also \cite{kokotov}). After the change of variable $\eta=p x$ with $p=|\tau|$ the problem \eqref{eq:dirfourierplamenvskii} takes the form 
\begin{flalign*}
 & L(D_{\eta},\Theta) U(\eta,\tau) = F(\eta,\tau) , \ \ \eta \in K & \\
 & U(\eta,\tau) = 0 , \ \ \eta \in \partial K
\end{flalign*}
where $U(\eta,\tau) = u(p^{-1}\eta,\tau)$, $F(\eta,\tau) = p^{-2} \hat{f}(p^{-1}\eta,\tau)$ with $\hat{f}(x,\tau) = g(x,\tau)$, $\Theta= \Theta(\tau) = \tau p^{-1} $, $\tau=\sigma - i \gamma$, $\sigma \in \mathbb{R}$, $\gamma>0$ and $L(D_{x},\tau) = - \Delta_{x}-\tau^{2}$. 

\ \\
Note that for $v(x)=\chi(x) |x|^{i \lambda_{k}} \Phi_{k}(\omega)$ there holds
\begin{equation}\label{eq:energypropestimate}
 \gamma^{2} \int_{K} (p^{2} |v(x)|^{2} + \nabla_{x} v(x))^{2} dx \leq c \int_{K} | (-\Delta_{x} - (\sigma-i\gamma)) v(x)|^{2} dx
\end{equation}
and
\begin{equation*}
 \gamma^{2} \lVert v \rVert_{H_{0}^{1}(K;p)}^{2} \leq c \lVert (-\Delta_{x}-\tau^{2}) v \rVert_{L_{2}(K)}^{2}
\end{equation*}
with a constant $c$, independent of $\tau=\sigma-i \gamma$, $\sigma \in \mathbb{R}, \gamma> 0$.

\ \\
Next, we take $\beta \leq 1$ and assume that the line $\Im \lambda = \beta-1/2$ does not contain points of the spectrum of the pencil $\mathcal{A}(\lambda)$. Then there holds \cite[Proposition 1.3]{kokotov4}
\begin{equation}\label{propproofhelp1}
 \lVert \chi_{p} v \rVert_{H_{\beta}^{2}(K;p)}^{2} + \gamma^{2} \lVert v \rVert_{H_{\beta}^{1}(K;p)}^{2} \leq c \left\lbrace \lVert f \rVert_{H_{\beta}^{0}(K)}^{2} + (\frac{p^{1-\beta}}{\gamma})^{2} \lVert f \rVert_{L_{2}(K)}^{2}\right \rbrace,
\end{equation}
where $f = L(D_{x},\tau)v, \chi_{p}(x)=\chi(px), \chi(x) = \chi^{1}(|x|)$, $\chi^{1} \in C_{0}^{\infty}(\mathbb{R}), \chi^{1} \equiv 1$ near $0$, and $c$ is independent of $\tau=\sigma-i \gamma, \sigma \in \mathbb{R}, \gamma > 0$.

\ \\
Furthermore from \cite{kokotov2}, for any $\beta \in \mathbb{R}$ and $U \in H_{\beta}^{s+1}(K;1)$ such that $U = 0 $ on $\partial K$ there holds
\begin{equation}\label{propproofhelp2}
\left(\frac{\gamma}{p}\right)^{2} \lVert \kappa_{\infty} \rVert_{H_{\beta}^{s+1}(K;1)}^{2} \leq c \left\lbrace \lVert \psi_{\infty} L(D_{\eta},\Theta) U \rVert_{H_{\beta}^{s}(K;1)}^{2} + \lVert \psi_{\infty} U \rVert_{H_{\beta-1}^{s+1}(K;1)}^{2} \right\rbrace,
\end{equation}
where $\kappa_{\infty}(y) = \kappa_{\infty}^{1}(|y|)$, $\psi_{\infty}(y)=\psi_{\infty}^{1}(|y|)$, $\kappa_{\infty}^{1},\psi_{\infty}^{1} \in C^{\infty}(\mathbb{R})$, $\kappa_{\infty}^{1} \psi^{1}_{\infty} = \kappa_{\infty}^{1}$; $\kappa_{\infty}^{1}$ and $\psi_{\infty}^{1}$ vanish near $0$ and are equal to $1$ at infinity, $s=0,1,\ldots$ and the constant $c$ is independent of the parameters .%\eqref{eq:energypropestimate} 

\ \\
We note that the estimate \eqref{eq:energypropestimate} is used in \cite{kokotov2} to show \eqref{propproofhelp2}.

\ \\
The following result by Kondratiev is crucial for the derivation of the a priori estimate \eqref{eq:propapriori}. 
\begin{proposition}[\cite{kondratiev}]
 Let $\chi,\psi \in C_{0}^{\infty}(\overline{K})$, $\chi = 1$ near the vertex $0$ of the cone $K$ and $\chi \psi=\chi$. Let the line $\Im \lambda = \beta-s-1+(n-d-2)/2$
do not contain eigenvalues of the pencil $\mathcal{A}$. then for all $U \in H_{\beta}^{s+2}(K;1)$ such that $U=0$ on $\partial K $ there holds
\begin{equation}\label{eq:propaposterioriestimate}
\lVert \chi U \rVert_{H_{\beta}^{s+2}(K;1)}^{2} \leq c \left\{ \lVert \psi L(D_{\eta},\Theta) U \rVert_{H_{\beta}^{s}(K;1)}^{2} + \lVert \psi U \rVert_{H_{\beta}^{s+1}(K;1)}^{2} \right\}.
\end{equation}
\end{proposition}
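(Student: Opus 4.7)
The plan is to reduce the weighted elliptic estimate on the cone to a family of elliptic estimates on the cross‑section $\Omega = K \cap S^{n-d-1}$ via a logarithmic change of radial variable and the Mellin transform. First I would pass to polar coordinates $\eta = r\omega$, $r = |\eta|$, $\omega \in \Omega$, and substitute $r = e^{-t}$, $t \in \mathbb{R}$. Under this substitution the cone $K$ becomes the cylinder $\mathbb{R}_t \times \Omega$; the weighted Sobolev space $H^{s+2}_\beta(K;1)$ becomes an exponentially weighted Sobolev space on the cylinder; and the operator $r^2 L(D_\eta,\Theta) = -r^2 \Delta_\eta - r^2 \Theta^2$ becomes $\mathfrak{L}(\partial_t, D_\omega) - e^{-2t}\Theta^2$, where $\mathfrak{L}$ is a differential operator with constant coefficients in $t$ whose Mellin symbol is precisely the operator pencil $\mathcal{A}(\lambda)$.

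Next, the Mellin transform $\widetilde{U}(\lambda,\omega) = \int_{\mathbb{R}} e^{i\lambda t} U(e^{-t}\omega)\,dt$ converts $\mathfrak{L}$ into multiplication by $\mathcal{A}(\lambda)$ on $\Omega$, and the norm on the exponentially weighted cylinder Sobolev space translates, by Parseval, into an integral over the line $\Im\lambda = \beta - s - 1 + (n-d-2)/2$ of a parameter‑dependent Sobolev norm of $\widetilde{U}(\lambda,\cdot)$. The hypothesis that this line contains no eigenvalue of the pencil means exactly that $\mathcal{A}(\lambda)$ with Dirichlet conditions on $\partial\Omega$ is invertible for every $\lambda$ on the line, so that standard parameter‑elliptic theory on the smooth bounded cross‑section $\Omega$ yields a uniform estimate of the type
\begin{equation*}
\|\widetilde{U}(\lambda,\cdot)\|_{H^{s+2}(\Omega;\,1+|\lambda|)}^2 \;\leq\; c\Bigl(\|\mathcal{A}(\lambda)\widetilde{U}(\lambda,\cdot)\|_{H^{s}(\Omega;\,1+|\lambda|)}^2 + \|\widetilde{U}(\lambda,\cdot)\|_{H^{s+1}(\Omega;\,1+|\lambda|)}^2\Bigr),
\end{equation*}
with a constant $c$ independent of $\lambda$ on the line.

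Integrating this inequality over $\lambda$ along the line and applying Parseval in the reverse direction would then recover the desired estimate on $K$, up to two lower‑order contributions. The perturbation $-e^{-2t}\Theta^2 \widetilde U$ (equivalently $-\Theta^2 U$, which carries an extra factor of $r^2$ relative to the leading elliptic term) is absorbed into $\|\psi U\|_{H^{s+1}_\beta(K;1)}^2$ because $|\Theta|\leq 1$ in the rescaled problem. The second set of remainders arises from the cut‑offs: multiplying $U$ by $\chi$ produces commutator terms $[L,\chi]U$ supported on $\mathrm{supp}(\nabla\chi) \subset \{\psi\equiv 1\}$, away from the vertex, where they are harmless and controlled by $\|\psi U\|_{H^{s+1}_\beta(K;1)}^2$.

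The hard part will be establishing the uniform parameter‑elliptic estimate on $\Omega$. For $|\Re\lambda|$ large the principal symbol of $\mathcal{A}(\lambda)$ dominates and ellipticity is automatic, yielding the bound from classical Agmon–Douglis–Nirenberg theory. For bounded $\lambda$ on the non‑eigenvalue line, Fredholm theory for the analytic family $\mathcal{A}(\lambda)$ gives pointwise invertibility with a trivial kernel, and continuity of $\lambda \mapsto \mathcal{A}(\lambda)^{-1}$ in the operator topology gives a uniform bound on compact pieces of the line; patching the two regimes together requires verifying that the resolvent does not deteriorate at the switchover between the two estimates. The remaining technical obstacle is the precise bookkeeping of how the Mellin weight $\Im\lambda = \beta-s-1+(n-d-2)/2$ corresponds, after Parseval, to the algebraic weight $r^{2(\beta-s-2+|\alpha|)}$ built into the definition of $H^{s+2}_\beta(K;1)$, and how the parameter‑dependent norms on $\Omega$ with parameter $1+|\lambda|$ combine with the powers of $r$ to reproduce the full weighted Sobolev norm on $K$.
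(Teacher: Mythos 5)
Your Mellin-transform reduction (logarithmic radial variable, invertibility of the pencil $\mathcal{A}(\lambda)$ on the non-eigenvalue line, uniform parameter-elliptic estimates on the cross-section, Parseval back, with the $-\Theta^2$ term and the cut-off commutators absorbed into $\|\psi U\|_{H^{s+1}_\beta(K;1)}$ thanks to the compact support of $\chi$) is precisely the classical Kondratiev argument. The paper does not prove this proposition itself but quotes it from \cite{kondratiev}, whose proof proceeds along essentially the same lines as your plan, so your approach is correct and matches the cited source.
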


Proceeding by induction on $q$ and using \eqref{propproofhelp1} at the first step, from \eqref{propproofhelp2} and  \eqref{eq:propaposterioriestimate} (for $n=3,d=0$) one obtains the assertion \eqref{eq:propapriori} with $p=|\tau|$ and $v=u$.

\ \\
Next, we consider the asymptotics for the normal derivative of the solution of \eqref{eq:oriProblem2}. If one solves \eqref{eq:oriProblem2} with Dirichlet boundary conditions by a boundary integral equation, then one is interested in the unknown normal derivative $\partial_{n} u$ on $\Gamma$ rather than in $u$. Taking the normal derivative of the decomposition \eqref{eq:proofstepbthm3} for $v$ (in the proof of Theorem \ref{app:theorem2}) gives results for $\partial_{n} u$ on $\Gamma=\partial K$ for u in \eqref{eq:oriProblem2}. The situation for the Dirichlet problem of the Laplace operator on an infinite wedge $\Omega=\mathbb{R} \times K$ is analysed in Theorem 7 in \cite{petersdorff2} and in a polyhedral cone $\Omega$ in Theorem 8 in \cite{petersdorff2} with the limit case of a screen in Example 4 in \cite{petersdorff2}. Redoing the derivation of the singularity terms after Theorem \ref{app:theorem1} for the limit case of a screen with corresponding $\lambda_{-k}$ and $\Phi_{k}$ one obtains for a circular screen the expansion \eqref{decompositionEdge} and for a polygonal screen the expansion \eqref{decomposition}. Here one first must modify the decomposition \eqref{app:thm2aussage} of Theorem \ref{app:theorem2} for the solution $u$ of \eqref{eq:oriProblem2} in a polyhedral cone. As in \cite{kozlov,dauge} one uses a partition of unity near an edge of the cone together with another dyadic partition of unity along that edge and takes a right hand side $f$ in \eqref{eq:dirplamenvskiiwave} with $\mathrm{supp} f$ compact. This gives the extension of Theorem \ref{app:theorem1} to a polyhedral cone. Finally a refined analysis as the one given in \cite{petersdorff2} yields also for a polyhedral cone a tensor product decomposition like \eqref{app:thm2aussage} extending Theorem \ref{app:theorem2}. The Neumann problem in \eqref{eq:oriProblem2} can be treated analogously. For the result corresponding to Theorem \ref{app:theorem1} and Theorem \ref{app:theorem2} see \cite{kokotov2}. Then following \cite{petersdorff} for $u|_{\Gamma}$ one can adjust the above given procedure to the wave equation outside the screen and finally one obtains the decompositions \eqref{decompostionEdget} and \eqref{decompositiont} for $u|_{\Gamma}$ for circular and polygonal screens, respectively.

  \bibliographystyle{abbrv}

\end{document}